\documentclass[10pt]{amsart}
\usepackage[left=1in,right=1in,top=1in,bottom=1in]{geometry}
\usepackage[utf8]{inputenc}
\usepackage{setspace}
\usepackage{amsmath,amssymb}
\usepackage{comment}
\usepackage{xcolor}
\usepackage{url}
\usepackage[colorlinks=true,allcolors=blue,backref=page]{hyperref}
\usepackage[noabbrev,capitalize,nameinlink]{cleveref}
\usepackage{bbm}
\usepackage{centernot}
\usepackage[shortlabels]{enumitem}
\usepackage{extarrows}

\newtheorem{thm}{Theorem}

\newtheorem{lem}[thm]{Lemma}
\newtheorem{cor}[thm]{Corollary}
\newtheorem{prop}[thm]{Proposition}
\newtheorem{claim}[thm]{Claim}
\newtheorem{observation}[thm]{Observation}

\theoremstyle{remark}
\newtheorem{remark}[thm]{Remark}

\theoremstyle{definition}
\newtheorem{example}[thm]{Example}

\newcommand{\R}{\mathbb{R}}
\newcommand{\Z}{\mathbb{Z}}
\newcommand{\Prob}{\mathbb{P}}
\newcommand{\eps}{\varepsilon}

\renewcommand{\epsilon}{\eps}
\renewcommand{\P}{\mathbb{P}}

\newcommand{\E}{\mathbb{E}}
\newcommand{\ind}{\mathbbm{1}}

\newcommand{\lam}{\lambda}

\newcommand{\Ghat}{\widehat{G}}
\newcommand{\upperlambda}{\overline{\lambda}}
\newcommand{\lowerlambda}{\underline{\lambda}}
\newcommand{\mubar}{\overline{\mu}}

\newcommand{\cC}{\mathcal{C}}

\newcommand{\GH}{{\mathrm{GH}}}
\newcommand{\one}{\ind}

\title{The geometry of the giant component of random geometric graphs}
\author{Karoline Dubin}
\address{University of Illinois Chicago}
\email{kdubin3@uic.edu}

\author{Christian Gorski}
\address{University of Washington}
\email{cgorski1@uw.edu}

\author{Marcus Michelen}
\address{Northwestern University}
\email{michelen@northwestern.edu}

\date{}

\begin{document}

\begin{abstract}
    Consider a random geometric graph $G_M(n;r)$ whose vertex set consists of $n$ points chosen independently and uniformly from a Riemannian manifold $M$, with edges joining pairs of vertices whose distance in the metric $d_M$ is at most $r$.  Let $\Delta$ denote the expected average degree of the graph.  As is the case for Erd\H{o}s-R\'enyi graphs, there is a critical value $\Delta_c$, depending only on the dimension of $M$, such that if $\Delta > \Delta_c$ then $G_M(n;r)$ has a giant component.  We show that whenever $\Delta > \Delta_c$, the giant component of $G_M(n;r)$, equipped with the graph distance,  converges to the underlying manifold $M$ in the Gromov-Hausdorff distance after rescaling by an appropriate deterministic factor.  Our result holds for $\Delta$ depending on $n$ as well, provided $\Delta = o(n)$ and $\Delta \geq \Delta_c + \eps$ for any fixed $\eps > 0$. As a consequence, we show that for any pair of non-isometric compact Riemannian manifolds $M_1$ and $M_2$, there is a polynomial-time algorithm that distinguishes random geometric graphs on $M_1$ and $M_2$ throughout this regime of $\Delta.$   In the thermodynamic regime---i.e.\ when $\Delta$ is constant---our results appear to be new even in the classical cases where $M$ is a sphere or a torus.  Our proof makes use of techniques from first-passage percolation which allow us to understand the long-range behavior of the graph distance on small, approximately Euclidean patches of $M$,  together with global arguments that glue these local estimates into a global description.
\end{abstract}

\maketitle

\section{Introduction}

Let $M$ be a compact $d$-dimensional Riemannian manifold. For parameters $n$ and $r > 0$, define the \emph{random geometric graph} $G_M(n;r)$ to be the random graph whose vertex set consists of $n$ points $v_1,\ldots,v_n$ placed on $M$ uniformly at random and whose edge set consists of pairs $\{v_i,v_j\}$ with $d_M(v_i,v_j) \leq r$.  We are interested in the following general question: how can we understand the geometry of $M$ based on the graph $G_M(n;r)$?

It is useful to parameterize $r$ in terms of the average degree.  With this in mind, let $\Delta$ be the expected average degree of $G_M(n;r)$.  Classical results on random geometric graphs tell us that when $M$ is a box in $\R^d$ and $d \geq 2$, 
there is some constant $\Delta_c = \Delta_c(d) \in (1,\infty)$ depending only on $d$ so that if $\Delta > \Delta_c$, there is a unique giant component, meaning that there is only one component of the graph with $\Omega(n)$ many vertices \cite[Chapter~11]{Pen03}.  In stark contrast to this, when $\Delta < \Delta_c$, all components are only polylogarithmic in size with high probability.  This same phase transition occurs  when $M$ is a connected and compact Riemannian manifold of underlying dimension $d \geq 2$, which leads to a fundamental question: in the case of $\Delta > \Delta_c$, what can we deduce about the underlying manifold $M$ from the graph $G_M(n;r)$?  

Our main result shows that when $M$ is any compact $d$-dimensional Riemannian manifold, the random metric space given by the graph distance on the giant component of $G_M(n;r)$ is approximately isometric to the underlying metric space $(M,d_M)$.  To state this result, we will ultimately use the framework of the Gromov-Hausdorff distance, and so we let $d_\GH$ be the Gromov-Hausdorff distance.  The amount we will need to rescale by will depend on both the connection radius $r$ and a (continuous, deterministic, non-increasing) function $\mubar: (\Delta_c,\infty) \to (1,\infty)$ depending on the dimension; we will see this function is closely related to the time constant of a (continuum) first-passage percolation model.  We are ready to state our main theorem:

\begin{thm}\label{thm:degree}
    Let $M$ be a connected compact $d$-dimensional Riemannian manifold with $d \geq 2$ and let $\Delta_c = \Delta_c(d)$ be the critical average degree for having a giant component.  For each fixed $\Delta_0 > \Delta_c$ and $\eps > 0$, there exists $\alpha = \alpha(M,\Delta_0,\eps) > 0$, $c = c(M,\Delta_0,\eps) > 0$ and $n_0 = n_0(M,\Delta_0,\eps)$ so that the following holds.  Let $G = G_M(n;r)$ be the random geometric graph on $M$ with expected average degree $\Delta \in [\Delta_0,\alpha n].$  Let $\mathring{G}_M(n;r)$ be the  component of $G$ with largest diameter and $d_G$ be the graph metric on $G$.  
    Then $$\P\left(d_{\GH}\left(\left(\mathring{G}_M(n;r),  \frac{r}{\mubar(\Delta)} d_G\right),(M,  d_M) \right) > \eps\right) \leq \exp(- c \log^2 n)$$ for all $n \geq n_0$.
\end{thm}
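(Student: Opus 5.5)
The natural route is to build an explicit correspondence between the giant component and $M$ and bound its distortion. Our target is to show that, with probability at least $1-\exp(-c\log^2 n)$, every pair of vertices $u,v$ in the largest-diameter component $\mathring G$ satisfies
$$\Bigl|\tfrac{r}{\mubar(\Delta)}d_G(u,v)-d_M(u,v)\Bigr|\le \eps,$$
and that $\mathring G$ is $\eps$-dense in $M$. The correspondence $\{(v,v)\}$, extended by sending each $x\in M$ to a nearby vertex of $\mathring G$, then has distortion $O(\eps)$, which bounds the Gromov–Hausdorff distance.

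\textbf{Step 1: the continuum time constant.} Rescale a small chart patch of $M$ by $1/r$. The vertex process becomes approximately a Poisson process in $\R^d$ of intensity $\Delta/\omega_d$, with unit connection radius. For this Boolean-model graph above criticality, subadditivity together with rotation invariance gives a deterministic constant $\mubar(\Delta)$: the chemical distance between points in the infinite cluster at Euclidean distance $L$ is $(\mubar+o(1))L$. We need quantitative versions of two facts.
\begin{itemize}
\item[(a)] Chemical distance concentrates with stretched-exponential tails, of the form $\exp(-c\log^2 n)$ at scale $L=\mathrm{polylog}\, n$ or larger. This should follow from Antal–Pisztora type estimates for supercritical continuum percolation plus a concentration inequality.
\item[(b)] The "holes" of the infinite cluster have size $O(\log n)$, so every ball of radius $\log^2 n$ (in rescaled units) meets the giant cluster.
\end{itemize}
Uniformity in $\Delta\in[\Delta_0,\infty)$ comes from monotonicity of $\mubar$ and from the fact that for large $\Delta$ one has $\mubar\to 1$ with easy estimates. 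The constraint $\Delta\le \alpha n$ ensures $r$ is small enough that patches of size $\delta$ fit in charts.

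\textbf{Step 2: local comparison on patches.} Cover $M$ by finitely many geodesically convex balls of radius $\delta=\delta(\eps,M)$ on which the exponential map is $(1+\eps)$-bi-Lipschitz. Couple the binomial process on each ball with a Poisson process, using a small sprinkling or thinning adjustment, to the Euclidean model. Metric distortion changes the connection radius by a factor $1\pm O(\delta^2)$, so sandwich the graph between Euclidean models of slightly different radius; continuity of $\mubar$ absorbs the error. A union bound over $\mathrm{poly}(n)$ pairs of vertices then shows that, for all $u,v$ in the local giant cluster with $d_M(u,v)\in[\eps\delta,\delta]$, we have $\frac{r}{\mubar}d_G(u,v)=d_M(u,v)(1\pm\eps)$. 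The same estimate gives a local lower bound without assuming both points lie in the cluster, because any path of length $k$ has Euclidean span at most $kr$, and at large scales such paths must traverse the cluster efficiently.

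\textbf{Step 3: gluing.}
\begin{itemize}
\item \emph{Upper bound.} Take a minimizing geodesic from $u$ to $v$ and divide it into segments of length about $\delta/2$. Pick cluster vertices near the breakpoints using Step 1(b). Local clusters on overlapping balls coincide: by uniqueness of the supercritical cluster, two large local clusters that overlap are connected within the overlap. Summing the local estimates gives $d_G\le (1+\eps)\frac{\mubar}{r}d_M+O(\log^2 n)$.
\item \emph{Lower bound.} Cut any graph geodesic from $u$ to $v$ into pieces whose $M$-displacement is about $\delta$. Each piece costs at least $(1-\eps)\frac{\mubar}{r}\delta$ by Step 2, and the displacements sum to at least $d_M(u,v)$. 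The subtle point is that graph-geodesic pieces may leave the local cluster. To handle this, apply the local estimate to the chemical distance in the whole graph restricted to a slightly larger ball; a Euclidean-model statement about shortest paths in the full graph that stay inside a box should suffice.
\item \emph{Identifying the component.} All local giants glue into one component, which is dense and therefore has diameter of order $\mathrm{diam}(M)\mubar/r$. Every other component lies in a hole and so has diameter $O(\log^2 n)$. Hence the component of largest diameter is the glued giant.
\end{itemize}

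\textbf{Main obstacle.} The hardest step is Step 1(a) together with its use in the lower bound of Step 3. We need a uniform, stretched-exponentially concentrated shape theorem for chemical distance in the supercritical Boolean model, valid uniformly over the whole range $\Delta\ge\Delta_0$ and robust to the curvature perturbation and to paths that wander outside a patch. To get it, I would first establish a renormalized block argument for Lipschitz-type bounds, which yields $d_G\le C L$ with failure probability $\exp(-cL)$. I would then prove convergence of $d_G/L$ via subadditivity, and finally upgrade to the required tail bounds via Kesten-type concentration applied at mesoscopic scales $L=\log^3 n$ and iterated.
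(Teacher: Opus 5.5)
Your overall architecture matches the paper's: chart patches, sandwiching the chart graph between homogeneous Boolean models, a quantitative shape theorem, continuity of $\mubar$, gluing along $d_M$-geodesics, and uniqueness to identify the giant. But there is a genuine gap in how you pass between the true graph and the sandwich models.

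The coupling $G^-(r)\subset G_S\subset G^+(r)$ transfers only monotone statements. It gives lower bounds on distances (from $G^+$) and upper bounds on distances between vertices of $G^-$ (from $G^-$). Your upper bound, with its ``$+O(\log^2 n)$'', silently assumes that every vertex $u$ of the global giant is within $O(\log^2 n)$ graph steps of the large cluster $\kappa$ of $G^-$. But $u$ need not even be a point of $G^-$, and Step~1(b) only supplies a point of $\kappa$ that is \emph{Euclidean}-close to $u$. Closing this is exactly \cref{sparse-giant-nearby} (the events $\cC_1,\cC_2,\cC_3$), which the paper calls its most technical step. It needs three things:
\begin{itemize}
\item $u$'s component inside the chart is macroscopic, since otherwise $\mathring G$ itself would be tiny;
\item macroscopic clusters are unique for the \emph{actual} chart graph, so this component contains $\kappa$;
\item an Antal--Pisztora bound holds for connected pairs in the actual chart graph, turning ``close and connected'' into ``$O(\log^2 n)$ steps''.
\end{itemize}
Neither the uniqueness event nor the Antal--Pisztora event is monotone, so neither can be inherited from the homogeneous models on either side of the sandwich. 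The chart graph has inhomogeneous intensity and a distorted metric, and the paper reproves both in that setting (\cref{lem:non-uniform-uniqueness} and \cref{non-uniform-Antal-Pisztora}).

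The same issue affects two other claims. First, your claim that local clusters on overlapping balls coincide: sandwich models for different charts are different point sets, so you need uniqueness for the true graph on the overlap, as in \cref{lem:giant-is-unique}. Second, your identification of the giant: other components are small not because they ``lie in holes'' (they can be long, thin, and interleaved with the giant) but because of this non-monotone uniqueness statement.

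Three smaller points.
\begin{itemize}
\item Sprinkling or thinning between the binomial and Poisson models likewise transfers only monotone events. The paper works with the Poisson model throughout and conditions on $\{N=n\}$ at the end, paying a factor $3\sqrt n$. This handles the non-monotone Gromov--Hausdorff event in one stroke.
\item Your additive $O(\log^2 n)$ graph-step errors become $O(r\log^2 n)$ after rescaling, which is not small when $\Delta$ is comparable to $\alpha n$ and $r$ is of constant order. The paper treats $\Delta\ge n^{1-\beta}$ separately (\cref{thm:sparse}) by an elementary argument with no percolation input. Your ``easy estimates for large $\Delta$'' must be an argument of that kind, not a limit of the percolation one.
\item You need not prove the shape theorem yourself, nor make it uniform in $\Delta$. \cref{large-deviations-time-constant} gives the $\exp(-c\|x-y\|)$ bound for each fixed intensity. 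Because the sandwich is monotone, the paper only applies it at finitely many intensities: a multiplicative net $\{\lambda_k\}$ below a cutoff $\lambda_+$ with $\mu(\lambda_+)\le e^{\eps/10}$. Above the cutoff, the upper bound uses the fixed lower intensity $\lowerlambda = \lambda_+ e^{-(d+1)\eta}$. The lower bound there is deterministic, since graph distance is at least Euclidean distance divided by the connection radius.
\end{itemize}
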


The most well-studied examples of random geometric graphs are taken with $M$ either equal to a sphere or a (flat) torus; \cref{thm:degree} is new even in these cases in the ``thermodynamic regime'' when $\Delta_c < \Delta = O(1)$.   
We will prove \cref{thm:degree} in two cases depending on the degree.  The more challenging case will be $\Delta \in[\Delta_0,n^{1 - \beta}]$ for some $\beta > 0$, while the case of $\Delta > n^{1 - \beta}$ will be more straightforward.   

The critical values $\Delta_c$ will appear due to a connection with  continuum percolation: if one places a Poisson process of intensity $\lambda > 0$ on $\R^d$ and connects points that are within distance $1$ of each other, then for $d \geq 2$ there will be some  $\lambda_c(d)$ so that for $\lambda > \lambda_c(d)$ there is an infinite connected component almost-surely, while for $\lambda < \lambda_c(d)$ there almost-surely will not be.  If one lets $\Delta$ be the average degree of a vertex in this setting, one obtains a critical average degree $\Delta_c$ from this process, which coincides with our $\Delta_c$ in \cref{thm:degree}.   
While the critical values $\Delta_c$ are not known precisely in any dimension---except the trivial case of $\Delta_c = +\infty$ for $d = 1$---extensive simulations in the physics literature provide estimates; we include predictions\footnote{The critical parameter they present $\eta_c$ is related to our $\Delta_c$ via $\Delta_c = 2^d \eta_c$.} from Torquato and Jiao \cite{torquato2012effect}, where we round to three decimal places:

\begin{table}[ht]
\centering
\begin{tabular}{c|c}

$d$ & predicted $\Delta_c$ \\
\hline
2  & 4.512 \\
3  & 2.735 \\
4  & 2.086 \\
5  & 1.742 \\
6  & 1.497 \\
7  & 1.345 \\
8  & 1.255 \\
9  & 1.205 \\
10 & 1.165 \\
11 & 1.133 \\
\end{tabular}
\caption{Predicted critical average degree $\Delta_c$ for continuum percolation in $\mathbb{R}^d$.}
\label{tab:Delta_c}
\end{table}

While $\Delta_c(d)$ appears to be decreasing in $d$, it seems that this is not rigorously known.  However, a theorem of Penrose \cite{penrose1996continuum} shows that $\lim_{d \to \infty} \Delta_c(d) = 1$.  With this in mind, \cref{thm:degree} shows that in sufficiently high dimensions, one only needs the average degree to be slightly above $1$ in order to infer the underlying geometry of the manifold.

In both regimes of $\Delta < n^{1-\beta}$ and $\Delta > n^{1 - \beta}$, we will make use of the Poissonized version of the random geometric graph and it will be simple to deduce \cref{thm:degree} from its Poissonized versions.  
We will let $G_M^n(r)$ be the random graph on $M$ whose vertex set $\{v_j\}$ is the set of points of a Poisson process of intensity $n$ and whose edges consist of pairs $\{v_i,v_j\}$ with $d_M(v_i,v_j) \leq r$.    We now state the more challenging part, where $\Delta$ is allowed to be arbitrarily close to the critical average degree $\Delta_c$.

\begin{thm}\label{thm:main}
     Let $M$ be a connected compact $d$-dimensional Riemannian manifold with $d \geq 2$ and let $\Delta_c = \Delta_c(d)$ be the critical average degree for having a giant component.  For each fixed $\Delta_0 > \Delta_c$,  $\beta \in (0,1)$, $\eps > 0$, there is a $c = c(M,\Delta_0,\beta,\eps) > 0$ and $n_0 = n_0(M,\Delta_0,\beta,\eps)$ so that the following holds.  Let $G = G_M^n(r)$ be the Poisson random geometric graph on $M$ with expected average degree $\Delta \in [\Delta_0,n^{1-\beta}].$  Let $\mathring{G}_M^n(r)$ be the  component of $G$ with largest diameter and $d_G$ the graph metric on $G$. Then for all $n \geq n_0$ we have $$\P\left(d_{\GH}\left(\left(\mathring{G}_M^n(r),  \frac{r}{\mubar(\Delta)} d_G\right),(M,  d_M) \right) > \eps\right) \leq \exp(-c \log^2 n)\,.$$
\end{thm}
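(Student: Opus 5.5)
We work in scaled local coordinates and prove the statement by combining local first-passage percolation estimates with a global chaining argument; the main difficulty is keeping the local estimates uniform in $\Delta$ and in position on $M$. Write $\Delta = n\,\mathrm{vol}(B_r)/\mathrm{vol}(M)\approx n\omega_d r^d/\mathrm{vol}(M)$, so that in a normal-coordinate chart around any point of $M$, after rescaling distances by $1/r$, the Poisson process of intensity $n$ looks like a Poisson process on $\R^d$ of intensity $\lambda = \Delta/\omega_d$ with unit connection radius. The metric is distorted by a factor $1+O(\rho^2)$ on a geodesic ball of radius $\rho$. The function $\mubar(\Delta)$ is the time constant of continuum first-passage percolation on the infinite cluster of the Boolean model at this intensity. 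Concretely, $d_G(x,y)/|x-y| \to \mubar$ for points $x,y$ in the infinite cluster, with $|x-y|$ measured in units of $r$. The plan is to establish this asymptotic with explicit, uniform-in-$\Delta$, stretched-exponential error bounds on patches of $M$ of a fixed small size $\rho$, and then glue.

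\textbf{Step 1 (local Euclidean estimates).} For $\lambda \ge \lambda_0>\lambda_c$, I would import or prove the following facts for the Boolean model in a box of side $L$ (in $r$-units).
\begin{enumerate}[(a)]
\item The largest cluster is unique, every point lies within distance $\log^2 L$ of it, and every other cluster has diameter $\le \log^2 L$. This comes from Penrose–Pisztora-type renormalization, with probability $1-\exp(-c\log^2 L)$.
\item Chemical distances in the cluster satisfy $d_G(x,y)\le C|x-y|$ for $|x-y|\ge \log^2 L$; this is the Antal–Pisztora analogue.
\item The time constant governs long distances: $|d_G(x,y)-\mubar|x-y||\le \eta |x-y|$ for all cluster points at distance $\ge \eta L$.
\end{enumerate}
Item (c) would come from subadditivity (Kingman) for the limit. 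Concentration would come from a Kesten/Talagrand-type argument using the bounded increments supplied by (b). Uniformity over $\lambda\in[\lambda_0,\infty)$ requires monotonicity arguments, for example coupling by thinning. For large $\lambda$ one has $\mubar\to1$, and the $\Delta\le n^{1-\beta}$ assumption guarantees $L\approx \rho/r\ge n^{\beta/d}$, so $\log^2 L\asymp\log^2 n$. Curvature is handled by sandwiching the chart metric between $(1\pm C\rho^2)$ times the Euclidean one.

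\textbf{Step 2 (gluing).} Cover $M$ by finitely many (independent of $n$) geodesic balls of radius $\rho$ with a $\rho/10$-net of centers, and take a union bound over all charts and over a polynomial-size net of point pairs; the total failure probability is still $\exp(-c\log^2 n)$. The largest local clusters in overlapping charts must coincide, by uniqueness in (a) on the overlap. Hence they merge into one global component $\mathcal{C}$, which is $\rho$-dense in $M$ for every $\rho$, while all other components have diameter $O(r\log^2 n)=o(1)$. This shows $\mathcal{C}$ is the component of largest diameter.

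\textbf{Step 3 (distance comparison).} The upper bound $\frac{r}{\mubar}d_G(u,v)\le d_M(u,v)+\eps$ follows by subdividing a minimizing geodesic into segments of length $\sim\rho$ and applying (c) in each chart. The extra cost at each junction is absorbed via (b). The lower bound is the main obstacle, since a graph path might be short by leaving charts. To handle it, split any graph geodesic into consecutive pieces of $M$-length about $\rho$. Each piece stays in a single chart and, by the lower half of (c) applied to its endpoints, has graph length at least $(\mubar-\eta)$ times its $d_M$-displacement divided by $r$. Summing and applying the triangle inequality gives the lower bound. Lower bounds on passage times must be available for all pairs of chart points, not only net points, which is obtained by rounding to net points at cost $O(\log^2 n)$ hops. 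Finally, the correspondence $\{(v,v)\}$ between $\mathcal C$ and $M$ has distortion $\le 2\eps$ and $\mathcal C$ is $\eps$-dense, so $d_\GH\le\eps$.
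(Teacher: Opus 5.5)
Your overall architecture is the paper's: normal-coordinate patches, sandwiching between homogeneous Boolean models, the Yao--Chen--Guo shape theorem with uniqueness and Antal--Pisztora, gluing along a $d_M$-geodesic for the upper bound, and cutting a graph geodesic into chart-sized pieces for the lower bound. But it omits an essential input: \emph{continuity of $\lambda\mapsto\mu(\lambda)$}. On a chart of fixed radius $\rho$, the intensity and the metric are distorted by factors $e^{\pm O(\rho^2)}$ that do not go away as $n\to\infty$. So the sandwich only yields
\[
e^{-\xi}\mu(\upperlambda)\le \frac{r\,d_G(p,q)}{d_M(p,q)}\le e^{\xi}\mu(\lowerlambda),\qquad \lowerlambda=\lambda e^{-O(\rho^2)},\quad \upperlambda=\lambda e^{O(\rho^2)}.
\]
The monotonicity-by-thinning you invoke for uniformity only tells you that this window contains $\mu(\lambda)$. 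Shrinking it to $[e^{-\eps}\mu(\lambda),e^{\eps}\mu(\lambda)]$ uniformly for $\lambda$ in a compact range $[\lambda_0,\lambda_+]$ is exactly uniform continuity of $\mu$. That is a nontrivial theorem: the paper imports the Lipschitz continuity result of Dubin--Gorski, a continuum analogue of Cox--Kesten-type continuity results. If $\mu$ could jump at $\lambda$, no sandwich argument would determine the scaling constant. Continuity is used in two further places:
\begin{enumerate}[(i)]
\item It lets the paper restrict to finitely many intensities $\lowerlambda,\upperlambda$ from a fixed multiplicative net. This is how the $\lambda$-dependent constants in the shape theorem become uniform over $\Delta\in[\Delta_0,n^{1-\beta}]$; your remark that $\mu\to1$ only handles the large-$\lambda$ end.
\item It is needed to replace $\mu(nr^d)$ by $\mubar(\Delta)$, since the true expected degree is $(1+O(r^2))\alpha_d n r^d$ rather than exactly $\alpha_d n r^d$.
\end{enumerate}

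A second point you should not leave to ``sandwiching'': items (a) and (b) are needed for the actual distorted chart graph, and they do not transfer from $G^\pm$.
\begin{enumerate}[(i)]
\item To bound $d_G$ from above between arbitrary vertices of the global giant, you must connect each such vertex to the cluster of the lower homogeneous graph $G^-$ in $O(\log^2 n)$ hops. $G^-$ does not contain these vertices, and comparison with $G^+$ gives inequalities in the wrong direction.
\item Likewise, ``all other components have diameter $O(r\log^2 n)$'' is a statement about the real graph, not about $G^\pm$.
\end{enumerate}
The paper proves uniqueness and an Antal--Pisztora bound directly for a Poisson process with density at least $\lambda_0$ and connection metric $\sigma$ satisfying $\sigma(x,y)\le\|x-y\|\le2\sigma(x,y)$ (its appendix). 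The proofs work because the crossing and renormalization events in Penrose's and Yao--Chen--Guo's arguments are monotone. It then combines these in the events $\cC_1,\cC_2,\cC_3$ to move between the local giant of $G^-$ and the global giant.
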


The general strategy of the proof will be to look at the graph in a small neighborhood of a given point $x \in M$.  Since $M$ is a manifold, a small neighborhood of $x$ will look approximately Euclidean.  This implies that the graph $G_M^n(r)$ near $x$ will look approximately like the random geometric graph in $\R^d$ whose vertex set is a (nearly homogeneous) Poisson process and whose edge set is those whose distances are (approximately) at most $r$.  This is a more classical setting of continuum percolation, and we refer the reader to the books \cite{Pen03,MR1409145} for many statements in this setting.  In the literature of percolation, graph distances on supercritical percolation clusters are often referred to as the \emph{chemical distance} between two points.  Tools to analyze the chemical distance come from the literature on \emph{first-passage percolation}, which most classically considers distances in $\Z^d$ where each edge is given an independent weight.   We will see from a quantitative shape theorem of Yao-Chen-Guo \cite{YCG11} that chemical distances in Euclidean continuum percolation are approximately equal to a rescaling of the Euclidean distance.  This will help us dictate the behavior of graph distances of $G_M^n(r)$ on small patches of $M$ and we will need to glue together a finite collection of these patches in order to understand the global behavior captured by \cref{thm:main}.  
We provide a more detailed proof sketch in \cref{sec:proof-sketch}.

The case of $\Delta > n^{1-\beta}$ is quite a bit simpler.  In particular, all the percolation arguments are no longer necessary.  We in fact have the function $\mubar$ is continuous (see \cref{Continuity of Time Constant}), non-increasing and $\mubar(\Delta) \downarrow 1$ as $\Delta \to \infty$ (see \cref{lem:limit-time-constant}).  In the case of polynomially large $\Delta$ we will also have that $G_M^n(r)$ is connected (\cref{lem:connected}) and so we need only refer to $G_M^n(r)$ rather than $\mathring{G}_M^n(r).$

\begin{thm}\label{thm:sparse}
    Let $M$ be a connected compact $d$-dimensional Riemannian manifold with $d \geq 2$.  For each fixed $\eps > 0$ and $\beta \in (0,1)$ there are $\alpha = \alpha(M,\beta,\eps) > 0, c = c(M,\beta,\eps) > 0$ and $n_0 = n_0(M,\beta,\eps)$ so that the following holds.  
    For $n \geq n_0$ and $\Delta \in [n^{1-\beta},\alpha n]$, let $G_M^n(r)$ be the  Poisson random geometric graph on $M$ of expected average degree $\Delta$.  Then $$\P(d_{\GH}\left(({G}_M^n(r),  r d_G),(M,  d_M) \right) > \eps) \leq \exp(- c n^{c})\,.$$
\end{thm}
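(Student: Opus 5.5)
We prove the statement by exhibiting an explicit correspondence between the vertex set of $G_M^n(r)$ and $M$, and bounding its distortion. Recall that the distortion of the correspondence $\{(v,v)\}$ (vertices viewed as points of $M$) controls $d_\GH$ up to a factor $2$, provided the vertices form an $\eps$-net of $M$. So it suffices to show three things, each with probability at least $1-\exp(-cn^c)$:
\begin{enumerate}[(i)]
\item every point of $M$ is within $\eps$ of a vertex;
\item the lower bound $r\,d_G(u,v)\ge d_M(u,v)$ holds for all vertices $u,v$;
\item the upper bound $r\,d_G(u,v)\le d_M(u,v)+\eps$ holds for all vertices $u,v$.
\end{enumerate}
The lower bound (ii) is deterministic: a path of $k$ edges has each step of $d_M$-length at most $r$, so the triangle inequality gives $d_M(u,v)\le r\,d_G(u,v)$.

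For (i) and (iii), fix a small $\delta=\delta(\eps)>0$ and use the fact that $\Delta\asymp n r^d$ (up to constants depending on $M$, valid for $r$ below the injectivity radius). This gives $r\ge c_M(\Delta/n)^{1/d}\ge c_M n^{-\beta/d}$. The upper constraint $\Delta\le \alpha n$ makes $r\le C\alpha^{1/d}$, which we choose to be at most $\delta$ and below the injectivity and convexity radii of $M$. Cover $M$ by $N\le C r^{-d}\delta^{-d}$ geodesic balls of radius $\delta r$. Each has volume $\gtrsim (\delta r)^d$, so its Poisson count is $\mathrm{Poi}(\gtrsim n\delta^d r^d)=\mathrm{Poi}(\gtrsim \delta^d\Delta)$ with $\Delta\ge n^{1-\beta}$. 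It is therefore empty with probability at most $\exp(-c\delta^d n^{1-\beta})$. A union bound over $N\le \mathrm{poly}(n)$ balls shows that with probability $1-\exp(-cn^{c})$ every $\delta r$-ball contains a vertex. This gives (i), since $\delta r<\eps$.

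For (iii), take vertices $u,v$ and a minimizing geodesic $\gamma$ from $u$ to $v$ of length $L=d_M(u,v)$. Place points $x_0=u,x_1,\dots,x_k=v$ along $\gamma$ with spacing $(1-2\delta)r$, so that $k=\lceil L/((1-2\delta)r)\rceil$. For each $0<i<k$ pick a vertex $w_i$ within $\delta r$ of $x_i$. Consecutive chosen vertices are then at $d_M$-distance at most $(1-2\delta)r+2\delta r=r$, so they are adjacent. This yields
\[
r\,d_G(u,v)\le rk\le \frac{L}{1-2\delta}+r\le L+\frac{2\delta}{1-2\delta}\operatorname{diam}(M)+r,
\]
which is at most $L+\eps$ once $\delta$ and $\alpha$ are small in terms of $\eps$ and $M$. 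The same argument also shows $G_M^n(r)$ is connected, consistent with \cref{lem:connected}.

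Combining (i)–(iii), the distortion is at most $\eps$ and every point of $M$ is $\eps$-close to $V(G)$, hence $d_\GH\le 2\eps$; replacing $\eps$ by $\eps/2$ finishes the proof. The only delicate point is the uniform comparison $\Delta\asymp nr^d$ with constants depending only on $M$. This needs $r$ below the injectivity radius, so that ball volumes are comparable to Euclidean volumes, and it is exactly why the upper cutoff $\alpha n$ is required. Beyond that, the proof is a straightforward union bound.
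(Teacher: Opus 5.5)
Your proof is correct, but it is organized differently from the paper's. The paper follows the same skeleton as its proof of \cref{thm:main}:
\begin{enumerate}
\item it proves connectivity separately (\cref{lem:connected});
\item it proves a \emph{multiplicative} two-point estimate for $\mathring{p},\mathring{q}$, with $p,q$ fixed and $d_M(p,q)\ge\eps/100$ (\cref{prop:two-point-sparse});
\item it takes a fixed finite $\eps$-net $X\subset M$, union-bounds over the finitely many pairs in $X$, and uses Palm theory to show $\{\mathring{x}\}_{x\in X}$ is a $2\eps$-net in the graph metric;
\item it concludes with \cref{GH-convergence}.
\end{enumerate}
You instead isolate one covering event: every cell of a fixed cover at scale $\delta r$ contains a vertex. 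On that event everything holds deterministically and simultaneously for \emph{all} vertex pairs.
\begin{itemize}
\item The lower bound $r\,d_G\ge d_M$ holds with no probabilistic input.
\item The geodesic chain gives an \emph{additive} upper bound $r\,d_G(u,v)\le d_M(u,v)+\eps$, so close pairs need no separate treatment.
\item Connectivity comes for free.
\item You can apply \cref{GH-convergence} with the whole vertex set and the identity map.
\end{itemize}
This removes the auxiliary net and Palm theory entirely and gives the explicit rate $\exp(-cn^{1-\beta})$. The paper's route buys only uniformity of exposition with the near-critical case, where no deterministic good event of this kind exists and two-point estimates are unavoidable.

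Two small things to tidy up.
\begin{itemize}
\item If each ball of a cover by $\delta r$-balls contains a vertex, you only get that every point of $M$ is within $2\delta r$ of a vertex. So either cover by $\delta r/2$-balls, or use spacing $(1-4\delta)r$ in the chain.
\item The comparison $\Delta\lesssim nr^d$ holds for \emph{all} $r$, since $\mathrm{Vol}(B_M(x,r))\le\min(C_Mr^d,\mathrm{Vol}(M))$. The bound $r\le C\alpha^{1/d}$ comes from $\Delta\gtrsim n\min(r,r_0)^d$ together with monotonicity of $\Delta$ in $r$, so no a priori smallness of $r$ is needed.
\item The real reason the cutoff $\alpha n$ is required is that the quantization error $r$ in $rk\le L/(1-2\delta)+r$ must be at most $\eps$. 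For $r\ge\mathrm{diam}(M)$ the graph is complete and the statement fails. Your argument does use this, so this is only a correction to your closing commentary.
\end{itemize}
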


We first show how \cref{thm:degree} follows immediately from \cref{thm:main} and \cref{thm:sparse}.  We quickly note that due to the continuity of $\mubar$ (\cref{Continuity of Time Constant}) along with concentration for the number of edges in the random geometric graph (\cref{lem:edge-concentration}) we may pass between the Poisson and fixed $n$ model without much issue.

\begin{proof}[Proof of \cref{thm:degree}]
    Let $r$ be so that the expected average degree of $G_M(n;r)$ is equal to $\Delta$.  Set $G_M^n(r)$ to be the Poisson random geometric graph with the same connectivity radius.  For $\alpha$ small enough, we have that the expected average degree of $G_M^n(r)$ is $\Delta' = (1 + o_{\alpha \to 0}(1))\Delta$ by \cref{lem:almostflat} along with \cref{lem:edge-concentration}.
    Let $V(G_M^n)$ denote the vertex set of $G_M^n$.  Set $\mathcal{G} = \{|V(G_M^n)| = n\}\,.$  Since $|V(G_M^n)|$ is a Poisson variable of mean $n$, Stirling's formula shows $\P(\mathcal{G}) = (1 + o(1)) (2\pi n)^{-1/2}$.  
    
    Note that for $\Delta$ sufficiently large we may use \cref{lem:limit-time-constant} to show $\mubar(\Delta') \in[1,1 + \eps/2]$, which will allow us to omit $\mubar(\Delta')$ in this regime by only replacing $\eps$ with $\eps/2$. 
    
    Define the event $$\mathcal{A} = \left\{d_{\GH}\left(\left(\mathring{G}_M^n(r),  \frac{r}{\mubar(\Delta')} d_G\right),(M,  d_M) \right) > \eps\right\} $$
     and note that \begin{align*}
     \P\left(d_{\GH}\left(\left(\mathring{G}_M(n;r),  \frac{r}{\mubar(\Delta')} d_G\right),(M,  d_M) \right) > \eps\right) = \P(\mathcal{A} \,|\,\mathcal{G}) \leq 3 \sqrt{n} \P(\mathcal{A}) \leq e^{-\Omega(\log^2 n)}
 \end{align*}
 where in the last inequality we used \cref{thm:main} and \cref{thm:sparse}.  Using \cref{Continuity of Time Constant} and recalling that $\Delta' = (1 + o_{\alpha \to 0}(1))\Delta$ completes the proof.
\end{proof}

\subsection{Algorithmic implications}  There has recently been much attention on statistical and algorithmic problems related to the geometry of random geometric graphs.  A natural question is when can one detect the difference between random geometric graphs and the Erd\H{o}s-R\'enyi random graph.  An influential work of Bubeck, Ding, Eldan, and R{\'a}cz \cite{BDJR16}  considers the case of constant edge density $p = \Omega(1)$ and  the regime where the dimension $d$ and the number of points $n$ simultaneously grow.  They show that for $d \gg n^3$ a random geometric graph of constant edge density is indistinguishable from the Erd\H{o}s-R\'enyi graph while for $d \ll n^3$ the graphs can be distinguished by counting signed triangles. Further work in this direction for the case of sparse $p$ was considered by Liu, Mohanty, Schramm and Yang \cite{LSSY22}.  In both cases, the underlying manifold of the geometric random graph is the sphere $\mathbb{S}^{d-1}$.

A question closer to our own is whether one can detect the underlying geometry itself.   An influential work by Bernstein,  De Silva, Langford, and Tenenbaum \cite{bernstein2000graph} from 2000 from the literature of \emph{manifold learning} analyzes a closely related problem; they seek understand an algorithm ``Isomap''  which has a crucial stage where one approximates the graph distance in a random geometric graph on a $d$-dimension manifold $M$ embedded in a higher dimension Euclidean stage with the intrinsic geodesic distance.  At the core of \cite{bernstein2000graph} is an estimate of distances that is analogous to case of large $\Delta$, i.e.\ \cref{thm:sparse}.  There has been much work on manifold learning since the work of \cite{bernstein2000graph}, although none appears to hold in the regime where the graph fails to be connected, e.g.\ when $\Delta = \Theta_d(1).$  

We highlight a  recent sequence of works by Huang, Jiradilok, and Mossel \cite{HJM24,HJM25,HJM26}  seeks to reconstruct the underlying geometry of noisy versions of random geometric graphs in an algorithmic fashion.  Throughout their series of papers, rather than simply adding edges deterministically for those of distance at most $r$, they add edges between points $x,y$ with some probability $p(d(x,y))$ for some function $p$.  This introduces a challenge to their problem that does not appear in our setting: the presence of additional edges---for instance edges that can be long in the $d_M$ metric---leads to a noisier and denser setting.  In particular, the graphs they consider need \emph{not} be approximately isometric to the underlying manifold, and one of their central challenges is to infer the ``signal'' of the underlying geometry in spite of this ``noise.''

The first work in their series \cite{HJM24} considers an embedded manifold $M \subset \R^d$ and  gives a $O(n^3)$ time algorithm for reconstructing the underlying metric in the dense regime (i.e.\ $r$ is a constant, or equivalently when $\Delta = \Theta(n)$). Their next work \cite{HJM25} extends their work to smaller average degree, in particular down to $\Delta \gg n^{1/2}\mathrm{polylog}(n)$.  Their very recent work \cite{HJM26} introduces a different technique for denoising and improves their results in higher dimensions.  We refer the reader to their papers and the references therein for more context on manifold learning.

As a consequence of our techniques, we will show that one can detect geometry in polynomial time for all average degree bounded away from the critical threshold $\Delta_c(d)$.  Since the distribution of the random geometric graph of average degree $\Delta$ is invariant under rescaling the manifold, we will normalize so our manifolds have volume $1$.
We  phrase our algorithmic result in the general setting of distinguishing between two manifolds that are not isometric:

\begin{thm}\label{thm:algorithmic}
	Let $M_1$ and $M_2$ be connected compact Riemannian manifolds of dimension $d_1, d_2 \geq 2$ and volume $1$ so that $M_1$ and $M_2$ are not isometric.  Let $\Delta_c(d_j)$ denote the critical average degree for having a giant component.  There is some $K = K(M_1,M_2) < \infty$ so that there is a deterministic algorithm depending on $M_1$ and $M_2$ with run time at most $n^K$ with the following properties.  There is $\alpha = \alpha(M_1,M_2)$ so that for each $\Delta_0 > \max\{\Delta_c(d_1),\Delta_c(d_2)\}$ there is $c = c(M_1,M_2,\Delta_0)$ so for all $\Delta \in [\Delta_0, \alpha n]$, given the adjacency matrices of the random geometric graphs  $G_{M_1}(n;r_1)$ and $G_{M_2}(n;r_2)$ of expected average degree $\Delta$, the algorithm can correctly identify which came from $M_1$ and $M_2$ with probability at least $1- \exp(-c \log^2 n)\,.$ 
\end{thm}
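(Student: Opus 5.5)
The algorithm reduces, via \cref{thm:degree}, to distinguishing two compact metric spaces at positive Gromov--Hausdorff distance. Since $M_1$ and $M_2$ are compact and not isometric, $\delta := d_{\GH}(M_1,M_2) > 0$. Set $\eps = \delta/4$ and apply \cref{thm:degree} to each $M_j$ with this $\eps$ and with the given $\Delta_0$. This produces $\alpha_j, c_j, n_0^{(j)}$, and we take $\alpha = \min\{\alpha_1,\alpha_2\}$.

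The difficulty is that the rescaling factor $r/\mubar(\Delta)$ depends on quantities not given to us. We handle this as follows.
\begin{itemize}
\item The algorithm knows $n$ and can compute the empirical average degree $\widehat{\Delta}$ from the adjacency matrix.
\item Because both manifolds have volume $1$, \cref{lem:almostflat} lets us recover $r_j$ from $\Delta$ up to a factor $1+o(1)$, via $\Delta \approx n\,\omega_d r^d$. Here $d = d_j$ is known, since the algorithm is built for the specific pair $M_1,M_2$.
\item By edge concentration (\cref{lem:edge-concentration}), $\widehat{\Delta} = (1+o(1))\Delta$ with overwhelming probability.
\item By \cref{Continuity of Time Constant}, $\mubar$ is continuous and hence uniformly continuous on $[\Delta_0,\infty)$, with $\mubar\to 1$ at infinity by \cref{lem:limit-time-constant}. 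So $\mubar(\widehat\Delta)/\mubar(\Delta)\to 1$.
\end{itemize}
Rescaling one hypothesis at a time sidesteps any need to know the true dimension: for a given graph $H$, the algorithm computes the candidate scaling $s_j = r_j(\widehat\Delta)/\mubar_{d_j}(\widehat\Delta)$ under the hypothesis that $H$ came from $M_j$. A multiplicative error $1+\eta$ in the metric changes the GH distance by at most $\eta\cdot\mathrm{diam}$, which is $O(\eta)$ since the diameter is bounded. Hence, with probability $1-\exp(-c\log^2 n)$, the largest-diameter component $\mathring H$ of the graph from $M_j$, under scaling $s_j$, is within $2\eps$ of $M_j$.

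An exact polynomial-time GH computation is not available, so we use a finite-net test instead.
\begin{enumerate}
\item Compute all connected components and all pairwise graph distances by BFS, in time $O(n^3)$, and select the component of largest diameter.
\item Precompute, offline, a finite $\eps$-net $N_j$ of each $M_j$ of size $m_j$, with its distance matrix.
\item Greedily extract an $\eps$-net of $\mathring H$ under the scaled metric. Its size is bounded in terms of the covering numbers of $M_j$, because $\mathring H$ is $2\eps$-close to $M_j$.
\item Decide whether there exists a correspondence between this net and $N_j$ with distortion at most some threshold $C\eps$. Both nets have size $O(1)$, so brute force over all relations costs $2^{O(1)}$, i.e.\ constant time. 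Hence the total run time is $n^K$.
\end{enumerate}

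Correctness comes from choosing the threshold to separate the two hypotheses.
\begin{itemize}
\item If $H$ came from $M_j$, the nets are within about $4\eps$ in GH distance, so a correspondence of distortion at most $8\eps+$ exists.
\item If $H$ came from $M_{3-j}$ but is tested against $M_j$, two subcases arise. If $d_1=d_2$, the scalings agree up to $1+o(1)$; then the triangle inequality gives $d_{\GH}(\mathrm{net},N_j)\ge \delta - 4\eps$, and any correspondence has distortion at least $2(\delta-4\eps)$. Choosing $\eps=\delta/20$, say, separates the two cases.
\item If $d_1\ne d_2$, we instead test both hypotheses with their own scalings and report the $j$ whose test passes. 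Under the wrong scaling, the space is a rescaled copy $\lambda M_{3-j}$, where $\lambda = s_j/s_{3-j}$ is explicit and, as $n\to\infty$, either tends to $0$ or $\infty$ (because $r$ scales as $(\Delta/n)^{1/d}$), or else converges to a constant. In the former case the diameter mismatch fails the test. In the latter case we need $\lambda M_{3-j}\not\cong M_j$, which holds for every $\lambda$ near the limit after shrinking $\eps$, except in the bad case where the limiting $\lambda_\ast M_{3-j}$ is isometric to $M_j$. That case is impossible, since manifolds of different dimensions are never isometric.
\end{itemize}
The failure probability is a union of the events in \cref{thm:degree} and the edge-concentration events, giving $\exp(-c\log^2 n)$. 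The passage from the algorithm's inputs $G_{M_j}(n;r_j)$ is exactly what \cref{thm:degree} addresses.

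The main obstacle is making the rescaling robust: the statement requires uniformity over all $\Delta\in[\Delta_0,\alpha n]$, and $\mubar_{d}$ is not explicitly computable. My first attempt is to avoid computing $\mubar$ at all by normalizing by the diameter. Scale $\mathring H$ so that its diameter equals $\mathrm{diam}(M_j)$; when $\mathrm{diam}(M_1)\ne\mathrm{diam}(M_2)$, a further comparison of scales is needed, which can be done using the known $r$ and the bounds $1\le \mubar\le \mubar(\Delta_0)$. If that becomes messy, the fallback is that the algorithm may depend on $M_1,M_2$, so it can hard-code a fine table approximating $\mubar_{d_j}$ on a compact range, together with the limit $1$ beyond it, using uniform continuity.
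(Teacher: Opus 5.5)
Your argument is correct in substance, but it takes a genuinely different route from the paper's.

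\textbf{How the routes differ.} The paper splits on dimension. For $d_1\ne d_2$ it just compares graph diameters (\cref{prop:diff-dimensions}). For $d_1=d_2$ it uses \cref{lem:GH-positive-implies-finite} and \cref{cor:identify-finite} to fix points $x_1,\dots,x_K\in M_1$ whose distance array stays $\eps_1$-far from every array in $M_2$. It then rescales both graphs by $r/\mubar(\Delta')$, read off from the edge count, brute-forces over all $K$-tuples of vertices of each giant, and outputs the graph with the smaller score. This runs in time $n^{K}$ with $K=K(M_1,M_2)$.

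You instead use $d_{\GH}(M_1,M_2)>0$ directly, so the finite-configuration lemma is not needed. You extract a bounded-size net of the rescaled giant and brute-force correspondences with a precomputed net of $M_j$. Each graph is classified on its own, and the run time is $O(n^3)$ plus a constant depending on $M_1,M_2$, so the exponent no longer depends on the manifolds. The price is that you must control the size of the extracted net, which the paper's fixed statistic (analyzed via \cref{lem:GH-finite-errors}) never needs.

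\textbf{Your diameter normalization already works.} No further comparison of scales is needed. Under the wrong hypothesis it yields approximately $\lambda_0M_{3-j}$ with $\lambda_0=\mathrm{diam}(M_j)/\mathrm{diam}(M_{3-j})$, and this is never isometric to $M_j$. If the dimensions differ this is immediate. If they agree, an isometry would force $\lambda_0^{d}=1$, since metric isometries of Riemannian manifolds preserve volume, and then $M_{3-j}\cong M_j$, a contradiction. So $d_{\GH}(\lambda_0M_{3-j},M_j)>0$ serves as the separation. The algorithm then needs neither $\mubar$ (which the paper's algorithm simply treats as available), nor $r$, nor edge counts. It is also independent of $\Delta_0$, as the quantifier order in the statement requires; your table fallback is not, since it only covers a $\Delta_0$-dependent compact range.

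\textbf{Two steps need repair.}
\begin{enumerate}
\item As stated, the size bound on the greedy $\eps$-net is false. A space that is $2\eps$-close to $M_j$ in $d_{\GH}$ can contain arbitrarily large $\eps$-separated sets, because a correspondence of distortion up to $4\eps$ may collapse them. Decouple the scales: apply \cref{thm:degree} with a tolerance $\eps'<\rho/4$, where $\rho$ is the net scale. Then $\rho$-separated points of $\mathring H$ correspond to $\rho/2$-separated points of $M_j$. Also have the algorithm reject whenever the net exceeds the resulting packing bound; this keeps the correspondence search constant-time under a wrong hypothesis as well.
\item When $d_1\ne d_2$, the ratio $\lambda=s_j/s_{3-j}$ need not converge, since $\Delta=\Delta(n)$ is arbitrary, and shrinking $\eps$ near a limit $\lambda_*$ is not uniform. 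Drop that subcase. The facts $r\asymp(\Delta/n)^{1/d}$, $\Delta/n\le\alpha$ and $1\le\mubar\le\mubar(\Delta_0)$ make $\lambda$ uniformly huge or uniformly tiny once $\alpha$ is small. The diameter mismatch alone then decides, which is exactly the mechanism of \cref{prop:diff-dimensions}.
\end{enumerate}
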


In the case of constant $\Delta > \Delta_c(d)$ this appears to be new even in the case of distinguishing a sphere and a torus.  The main idea is that if two compact manifolds are not isometric, then there is some finite collection of points in one so that the array of pairwise distances between those points is never achieved in the other (see \cref{lem:GH-positive-implies-finite}) and subsequently applying \cref{thm:degree}.  

\begin{remark}
    If $M_1$ and $M_2$ have different \emph{diameter}, then the runtime in \cref{thm:algorithmic} can be simplified.  In particular, the proof shows that we need only find the diameter of the graphs, and with probability at least $1 - \exp(-c \log^2 n)$ the graph coming from the manifold of larger diameter will have larger graph diameter.  One can create the array of pairwise distances by running a breadth-first search starting from each of the $n$ vertices in each graph.  Since a breadth-first search has run-time $|E|$ where $|E| = \Theta(n\Delta)$ is the number of edges in each graph, this implies that we can distinguish between manifolds of different diameter in time $O(n^2 \Delta)$ in the setting of \cref{thm:algorithmic}.
\end{remark}

\begin{example}[Sphere versus torus]
    If $M_1$ and $M_2$ have the same volume but different diameter then one example of a distinguishing statistic is the diameter of the giant components.  In particular, in the case of a sphere versus a flat torus, we can let $M_1 = \alpha_d \mathbb{S}^{d}$ where $\alpha_d$ is the normalization so that $M_1$ has volume $1$ and $M_2 = \R^d / \Z^d$, we note that $\mathrm{diam}(M_1) = \pi \alpha_d$ and $\mathrm{diam}(M_2) = \frac{\sqrt{d}}{2}$.  Since we have $$\alpha_d = \left( \frac{\Gamma(\frac{d+1}{2})}{2 \pi^{(d+1)/2}}\right)^{1/d}$$
    one can check that $\pi \alpha_d = \frac{\sqrt{d}}{2}$ if and only if $d = 1$.  In particular, for all $d \geq 2$ we have that $\mathrm{diam}(M_1) > \mathrm{diam}(M_2)$  and so choosing the graph of larger graph metric diameter distinguishes a sphere from a torus with high probability.
\end{example}

We next give an example where the manifolds have equal volume and diameter but are not isometric.

\begin{example}
    It is possible for $M_1$ and $M_2$ to have the same volume and diameter and still fail to be isometric.  For instance, if we consider $M_1 = \alpha_2 \mathbb{S}^2$ as in the previous example and now define $M_2 = \R^2 / (a \Z \oplus b \Z)$ with $ab = 1$, then $\mathrm{Vol}(M_2) = 1$.  We have that $$\mathrm{diam}(M_2) = \frac{1}{2}\sqrt{a^2 + b^2}\,.$$
    Since the case of $a = b = 1$ yields $\mathrm{diam}(M_2) < \mathrm{diam}(M_1)$, by the intermediate value theorem we may pick $a$ and $b$ so that we simultaneously have $ab = 1$ and $\mathrm{diam}(M_2) = \mathrm{diam}(M_1)$.  However, the two are not isometric since $M_1$ is simply connected and $M_2$ is not.
\end{example}

\subsection{Proof outline and structure of the paper} \label{sec:proof-sketch}

Before describing the giant component of random geometric graphs, we recall some intuition for the emergence of the giant component in the Erd\H{o}s-R\'enyi random graph $G(n,p)$.  Recall that if $G = G(n,p)$ for $p \in [0,1]$ then $G$ has $n$ vertices and each possible edge is added independently with probability $p$.  If we parameterize $p = \Delta / n$ where we think of $\Delta$ as fixed, then the average degree of $G$ is approximately $\Delta$.  Further, if we pick a random vertex $v \in G$, then it will have roughly $\mathrm{Poisson}(\Delta)$ many neighbors, each of which will have approximately $\mathrm{Poisson}(\Delta)$ neighbors and so on.  When $\Delta$ is fixed and $n$ tends to infinity, the depth $k$ ball around a random vertex $v$ will be indistinguishable from a branching process with offspring distribution $\mathrm{Poisson}(\Delta)$.  In the language of random graphs, this says that the \emph{local weak limit} of $G(n,\Delta/n)$ is the branching process with offspring distribution $\mathrm{Poisson}(\Delta)$.  Synonymously, one says that $G(n,\Delta/n)$ Benjamini-Schramm converges to the branching process with offspring distribution $\mathrm{Poisson}(\Delta)$.

The classical theory of branching processes tells us that when $\E \mathrm{Poisson}(\Delta) = \Delta \leq 1$ we have that the branching process is finite almost surely, while if  $\E \mathrm{Poisson}(\Delta) = \Delta > 1$ there is a positive probability that it is infinite.  When thinking about $G(n,\Delta/n)$ for $\Delta \leq 1$, this means that for a typical vertex its connected component has $O(1)$ many vertices.  This immediately implies that the largest component has $o(n)$ many vertices; more refined work shows that the largest component is size $O(\log n)$ for $\Delta < 1$ and $O(n^{1/3})$ for $\Delta = 1$. Conversely, for $\Delta > 1$, there is a positive probability that the underlying branching process is infinite.  Since $G(n,\Delta/n)$ has only finitely many nodes, this means that two nodes whose local limits are quite large must eventually meet.  This shows that for $\Delta > 1$ the largest connected component has size $\Omega(n).$  We refer the reader to \cite{bollobas2011random} for more context on $G(n,p)$ and this phase transition.

One can understand the case of random geometric graphs in analogy with the Erd\H{o}s-R\'enyi case by similarly thinking of Benjamini-Schramm convergence.   If we set $M$ to be a connected, compact Riemannian manifold of dimension $d$, then a small neighborhood of each point of $M$ is locally Euclidean.  Further, the volume form near a point will also be approximately Lebesgue when rescaled.  More formally, for a point $x \in M$, if we take $\delta > 0$ small enough, then there is a map $u_x: B_M(x,\delta) \to \R^d$ that maps the ball of radius $\delta$ centered at $x$ in $M$ that approximately preserves the measure and metric on $M$ (see \cref{lem:almostflat} for a formal statement).  The image of the component of $G_M(n;r)$ in $B_M(x,\delta)$ will have vertex set given by an (approximately) homogeneous Poisson process of intensity $n$ in $\R^d$ and whose edges consist of points within Euclidean distance (approximately) at most $r$.  In the case where $\Delta$ is fixed, we can rescale $\R^d$ so that points are connected if they are within distance $1$ and the intensity $\lambda$ is so that the average degree is $\Delta$ (one may use \cref{Palm theory} to show that $\lambda = \Delta / \mathrm{Vol}(B_{\R^d}(1))$).  This limiting process is a model of \emph{continuum percolation} that goes by various names, such as the Poisson-Boolean model or the Gilbert model, after being introduced by Gilbert in 1961 \cite{gilbert1961random}.   

In analogy with Erd\H{o}s-R\'enyi graphs, the relevant question is whether this continuum percolation model has an infinite component.  Classical results show that for $d \geq 2$ there is indeed a non-trivial phase transition, and there is some $\lambda_c = \lambda_c(d)$ so that if $\lambda < \lambda_c(d)$ there is almost-surely no infinite component while if $\lambda > \lambda_c(d)$ then there is almost-surely a unique infinite component.  We refer the reader to the classic book of Meester and Roy \cite{MR1409145} on continuum percolation for more history and theory related to this transition.  

This argument shows that the local weak limit of $G_M(n;r)$ is given by continuum percolation; this can be turned into an argument that shows a giant component phase transition occurs precisely as the average degree $\Delta$ crosses the critical value for continuum percolation in $\R^d$ where $d$ is the dimension of $M$.  Letting $\mathring{G}_M(n;r)$ denote the giant component of $G_M(n;r)$, this local weak limit in fact captures the \emph{geometry} of the giant component.  Graph geodesics in $\mathring{G}_M(n;r)$ between nearby points translate to graph geodesics in some approximate local weak limit, i.e.\ they are fairly close to graph geodesics in the infinite continuum percolation cluster in $\R^d$.  The graph distance in a supercritical percolation cluster is often referred to as the \emph{chemical distance}, and is typically analyzed using tools from \emph{first-passage percolation}.

The most classical setting of {first-passage percolation} consists of the underlying graph being the infinite lattice $\Z^d$ for $d\geq 2$ with i.i.d.\ non-negative random variables $\tau_e$ assigned to each edge referred to as its weight.  The weight of a path is given by the sum of the weights along the path, and the distance $T(x,y)$ between two points $x,y \in \Z^d$ is the smallest weight achievable by a path between the points.  Among the most classical statements of first-passage percolation is that for far away points, the random distance $T(x,y)$ is roughly deterministic: for each unit vector $\widehat{v}$ and edge distribution $\tau$ there is a constant $\mu( \widehat{v})$ known as the \emph{time constant} so that if $x - y$ is roughly parallel to $\widehat{v}$, then $T(x,y) \approx \mu(\widehat{v})\|x - y\|_2$.  A stronger version of this that is uniform in the direction $\widehat{v}$ is referred to as the \emph{shape theorem} and was proven by Cox and Durrett \cite{cox1981some} using a subadditive ergodic theorem by Kingman \cite{kingman1968ergodic} which immediately proves the case for fixed $\widehat{v}$.  We refer the reader to \cite{auffinger201750} for more context on first-passage percolation.

In our setting, the chemical distances on the infinite percolation cluster in $\R^d$ also may be seen to follow a shape theorem.  We will use a quantitative version of this by Yao-Chen-Guo \cite{YCG11} which we reproduce as \cref{large-deviations-time-constant}.  This will show that not only do small patches of $\mathring{G}_M(n;\Delta)$ look like the infinite cluster of continuum percolation from the perspective of connectivity, but also from the perspective of geometry.  Namely, in each small patch of $M$, we will show using \cref{large-deviations-time-constant} that distances within that patch are approximately  the time constant multiplied by the (rescaled) Euclidean distance.

In \cref{sec:move-giant-component}---which is the most technical section of the paper---we show that one can in fact move from the giant component of the finite random geometric graph $G_M(n;r)$ to the infinite component of continuum percolation that is its local weak limit.  From there, we apply the quantitative shape theorem of Yao-Chen-Guo in \cref{sec:patch} to show that on patches of $M$, we have that graph distances in $\mathring{G}_M(n;r)$ behave in a predictable fashion.  We then need to boost these patch estimates into global estimates, and do so by applying our results from \cref{sec:move-giant-component,sec:patch} to show that not only does a finite collection of patches capture much of the geometry involved, but that graph geodesics do not wander far enough to escape any of these patches.  \cref{sec:global-bounds} is split into two sections, the first of which \cref{sec:two-point} performs this gluing of patches to show that two-point distances in $\mathring{G}_M(n;r)$ are close to the rescaled version of $d_M$ while \cref{sec:proof-of-main} uses this estimate to build approximate embeddings that witness a bound on the Gromov-Hausdorff distance.  These three sections complete the proof of \cref{thm:main}.

The proof of the denser case \cref{thm:sparse} is simpler and requires no percolation input.  The proof proceeds in \cref{sec:denser} by showing an analogous two-point estimate by simply showing that in small patches along a $d_M$ geodesic in $M$ there are enough points so that the graph distance is roughly close to its minimum possible value.  We end with \cref{sec:algorithmic-proofs} where we deduce \cref{thm:algorithmic} from \cref{thm:degree}.  The main work there is to prove an elementary and likely standard fact about compact metric spaces \cref{lem:GH-positive-implies-finite} which shows that if two compact metric spaces are not isometric then there is a finite collection of points in one whose pairwise distances are never achieved in the other.  

To begin with, \cref{sec:preliminaries} consists of relevant preliminaries on Riemannian manifolds, Poisson processes, and first-passage percolation.  Some of these facts are more standard---such as the Palm theory for Poisson processes---while some are tailored to our setting, such as the continuity of the time constant in continuum percolation (\cref{Continuity of Time Constant}, recently proven by the first two authors).  We continue with a bookkeeping section, \cref{sec:setting-up-processes}, where we specify choices of parameters and trap our approximately Euclidean patch between two copies of $\R^d$ where we have only slightly rescaled the metric and measure.

\section{Preliminaries} \label{sec:preliminaries}

Throughout, $G_M^n(r)$ will be the geometric random graph on $M$ whose vertex set is given by a Poisson process of intensity $n$ and we connect vertices $v$ and $w$ if $d_M(v,w) \leq r$.  We will see below (see \cref{lem:almostflat}) that we will often map from a neighborhood of $M$ to Euclidean space.  There we will rescale space by $(1 + o(1))r$ and so we will define $\lambda := n r^d$.  While in the introduction we parameterize based on the average degree $\Delta$, we will parameterize by $\lambda$ instead since it will be more convenient when comparing to Euclidean space.   The relationship between $\Delta$ and $\lambda$ will be further described in \cref{sec:PPP-FPP}.  

We first describe a basic tool to compare our Riemannian manifold $M$ with $\R^d$ in \cref{sec:riemannian}, then describe Poisson point processes and first-passage percolation in \cref{sec:PPP-FPP}, and finally discuss the uniqueness of the giant component of supercritical random geometric graphs in \cref{sec:uniqueness}.

\subsection{Riemannian manifolds} \label{sec:riemannian}

Throughout, $M$ is a connected compact Riemannian manifold  of volume $1$.  We write $d_M$ for the metric induced by the Riemannian metric on $M$ and $dM$ for the Riemannian volume measure.  Central to our approach will be to compare a patch of $M$ with a set in Euclidean space.  We write $\mathcal{L}$ for Lebesgue measure in $\R^d$.  The following basic lemma will be our main tool:

\begin{lem}[Microscopic balls on $M$ are almost flat] \label{lem:almostflat}
    For each $z \in M$ and $\eta > 0$ there exists $\delta_0 \in (0,1)$
	such that for every $0<\delta \le \delta_0$,
	any ball $U = B_M(z,\delta)$ in $M$ of radius $\delta$ has the following properties. 
	Let
	\[
	u : U \to B_{\R^d}(0,\delta)
	\]
	be exponential normal coordinates. 
	Then $u$
	is a diffeomorphism with
	\[
	e^{-\eta} d_M(x,y) 
	\le d_{\R^d}( u(x), u(y) )
	\le e^{\eta}d_M(x,y)
	\]
	for all $x,y \in U$.
	Moreover if $\mathcal{L}$ is the Lebesgue measure on $\R^d$ then the Radon-Nikodym derivative of the pushforward $u_\ast dM$ of the Riemannian volume measure $dM$ satisfies $$e^{-\eta} \leq \frac{u_\ast dM}{d\mathcal{L}} \leq e^{\eta}$$ on $u(U)$.
\end{lem}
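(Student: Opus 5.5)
The plan is to work in exponential normal coordinates at $z$ and use only that the metric coefficients are continuous and equal the identity at the origin. Fix an orthonormal frame of $T_zM$ and let $\exp_z : T_zM \cong \R^d \to M$ be the exponential map. Since $M$ is compact, the injectivity radius $\mathrm{inj}(z) > 0$. For $\delta < \mathrm{inj}(z)$, the map $\exp_z$ is a diffeomorphism from $B_{\R^d}(0,\delta)$ onto $B_M(z,\delta)$, because radial geodesics are minimizing there. Let $u = \exp_z^{-1}$ on $U = B_M(z,\delta)$. In these coordinates write the metric as $g_{ij}(v)$. Then $g_{ij}(0) = \delta_{ij}$ and $g$ is smooth, so for any $\theta > 0$ there is $\delta_1$ with
\[
e^{-2\theta}|w|^2 \le g_v(w,w) \le e^{2\theta}|w|^2
\]
for all $|v| < \delta_1$ and all $w \in \R^d$.

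The volume bound follows at once. The density of $u_\ast dM$ with respect to $\mathcal{L}$ is $\sqrt{\det g(v)}$, which lies in $[e^{-d\theta}, e^{d\theta}]$. Taking $\theta \le \eta/d$ gives the claim.

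For the metric comparison, take $x,y \in U$ and write $a = u(x)$, $b = u(y)$. For the upper bound on $d_M$, the straight segment from $a$ to $b$ stays in the convex ball $B_{\R^d}(0,\delta)$. Its $g$-length is at most $e^{\theta}|a-b|$, so $d_M(x,y) \le e^{\theta} d_{\R^d}(a,b)$.

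The reverse inequality is the main obstacle, since a minimizing $M$-geodesic from $x$ to $y$ might leave $U$. I would handle this by shrinking. Choose $\delta_0 \le \delta_1/3$, so that $d_M(x,y) < 2\delta$ and any minimizing geodesic stays within $d_M$-distance $3\delta \le \delta_1$ of $z$. Such a geodesic therefore lies in $\exp_z(B(0,\delta_1))$, where the coordinates are still valid. On that larger ball the Euclidean length of the coordinate curve is at most $e^{\theta}$ times its $g$-length, and the Euclidean length is at least $|a-b|$. This gives $|a-b| \le e^{\theta} d_M(x,y)$.

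Taking $\theta = \eta/d \le \eta$ then yields both inequalities with constant $e^{\eta}$. The dependence of $\delta_0$ on $z$ is allowed by the statement, although compactness would even make it uniform.
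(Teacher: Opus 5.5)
Your proposal is correct and follows essentially the paper's route: both use exponential normal coordinates with $g_{ij}$ close to $\delta_{ij}$ near the origin, and both bound the volume density $\sqrt{\det g}$. You differ only in using continuity of $g$ rather than the expansion $g_{ij}=\delta_{ij}+O(|u|^2)$, and in spelling out the bi-Lipschitz step (including the shrinking to $\delta_0\le\delta_1/3$ so that minimizing geodesics leaving $U$ stay in the chart), which the paper compresses into ``taking $\delta$ sufficiently small.''
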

\begin{proof}
        Set $\phi = \log_z = (\exp_z)^{-1}: U \to T_z M \to \R^d$ where we use an orthonormal basis for $T_zM$ to
	identify $T_zM$ with $\R^d$, and 
	$\log_z$ is the inverse of 
	the exponential map $\exp_z:T_zM \to M$
	restricted to a small enough neighborhood
	of $0$ that it is a diffeomorphism.  For a point $y \in U$ we set $u = \phi(y)$ and note that in these coordinates the metric satisfies \begin{equation} \label{eq:normal-coords-error}
		g_{ij}(u) = \delta_{ij} + O(|u|^2)
	\end{equation} (see, e.g., \cite[Proposition 5.24]{lee2018introduction}).  Taking $\delta$ sufficiently small ensures that $\phi$ is $e^\eta$-bi-Lipschitz.  The Riemannian volume element $dM$ is \begin{equation}
		dM = \sqrt{\det(g_{ij})} du^1 \wedge  \cdots \wedge du^d
	\end{equation}
	(see \cite[Proposition 2.41]{lee2018introduction}) and so combining with \eqref{eq:normal-coords-error} shows $dM = (1 + O(|u|^2)) du^1 \wedge \cdots \wedge du^d$.  Taking $\delta$ small enough forces the $(1 + O(|u|^2))$ coefficient to lie in $[e^{-\eta},e^\eta]$ which then completes the proof.
\end{proof}

\subsection{Poisson processes and first-passage percolation} \label{sec:PPP-FPP}

Throughout we will make use of various aspects of Poisson processes.  A useful tool is the Palm theory for Poisson processes, certain versions of which are often called Mecke formulas; this will allow us to compute the expectation of various statistics of Poisson processes in terms of deterministic integrals.  We will often make use of these for estimating certain quantities and so we present a general version of this property:

\begin{thm}[Theorem 4.1 \cite{last2018lectures}]\label{Palm theory} \emph{[Palm theory for Poisson processes.]}
Let $\nu$ be a $\sigma$-finite measure on $\R^d$ and $X_\nu$ a Poisson process with intensity $\nu$.  Then for all non-negative, bounded and measurable functions $f$ we have $$\E\left[\sum_{x \in X_\nu} f(x,X_\nu) \right] = \int_{\R^d} \E\left[f(x,X_\nu \cup \{x\}) \right]\,d\nu(x)\,.$$ 
\end{thm}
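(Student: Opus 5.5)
The statement is the Mecke formula for a Poisson process. The plan is to first prove it for a special class of functionals and then extend by a monotone class argument.

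\textbf{Reduction to finite measure.} By $\sigma$-finiteness, write $\R^d = \bigcup_k A_k$ as a disjoint union with $\nu(A_k) < \infty$. Both sides of the identity are monotone limits in $f$, so by monotone convergence it suffices to treat $f$ supported (in the first variable) on a set $A$ of finite measure. Restricting the process to $A$ alone is not enough, however, because $f$ also depends on the whole configuration. So I would instead use the independence of $X_\nu \cap A_k$ over $k$, and argue via the Laplace functional, which handles everything at once.

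\textbf{Key computation.} Take functions of the form $f(x,\mu) = g(x)\, e^{-\mu(h)}$, where $g,h \ge 0$ are measurable, $g$ is supported on a set of finite $\nu$-measure, and $\mu(h) = \sum_{y\in\mu} h(y)$.
\begin{itemize}
\item The right side is $\int g(x) e^{-h(x)}\, \E[e^{-X_\nu(h)}]\, d\nu(x)$.
\item Recall the Laplace functional $L(h) = \E[e^{-X_\nu(h)}] = \exp\!\left(-\int (1-e^{-h})\,d\nu\right)$.
\item The left side is $\E[X_\nu(g) e^{-X_\nu(h)}] = -\frac{d}{dt}\big|_{t=0} L(h+tg)$. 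Differentiation under the expectation is justified by monotone or dominated convergence, since $g$ has finite integral.
\item Computing the derivative gives $L(h)\int g e^{-h}\,d\nu$, which matches the right side.
\end{itemize}

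\textbf{Extension.} Both sides define measures on $\R^d \times \mathbf{N}$, where $\mathbf{N}$ is the space of locally finite configurations: the left side is the Campbell measure $C(B) = \E\sum_{x} \one_B(x,X_\nu)$, and the right side is $\int \P((x,X_\nu\cup\{x\})\in\cdot)\,d\nu(x)$. Products $\one_A(x)\,e^{-\mu(h)}$ form a multiplicative class that generates the product $\sigma$-algebra, since the maps $\mu \mapsto e^{-\mu(h)}$ determine the distribution on $\mathbf{N}$. Both measures are $\sigma$-finite, which one sees using the sets $A_k \times \mathbf{N}$. So by the monotone class ($\pi$–$\lambda$) theorem they agree. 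The identity then extends to all nonnegative measurable $f$ by simple-function approximation and monotone convergence.

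\textbf{Main obstacle.} The delicate step is the measure-uniqueness extension. It requires checking that exponential functionals form a determining class on configuration space, and handling $\sigma$-finiteness. A more hands-on alternative is to verify the identity directly for $f$ depending on finitely many counts $\mu(B_1),\dots,\mu(B_m)$, using conditional uniformity of Poisson points given the counts, and then apply a $\pi$–$\lambda$ argument. I would try the Laplace route first.
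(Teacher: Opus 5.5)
The paper gives no proof of this statement. It is quoted from Last--Penrose (Theorem 4.1 there) and used as a black box, so your proposal is a self-contained substitute rather than a reconstruction of an argument in the paper. As such it is correct.

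Your key step is sound. Differentiating $t\mapsto L(h+tg)$ at $t=0^{+}$ verifies the identity on the class $\one_A(x)e^{-\mu(h)}$ with $\nu(A)<\infty$. The derivative is one-sided and justified by monotone convergence, since $(1-e^{-ta})/t\uparrow a$ as $t\downarrow 0$. This class is closed under products and generates the product of the Borel $\sigma$-algebra with the $\sigma$-algebra on $\mathbf{N}$ generated by the counts $\mu\mapsto\mu(B)$ (take $h=t\one_B$).

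To run the monotone class theorem cleanly, localize first. Fix $A_0$ with $\nu(A_0)<\infty$. Restricted to $A_0\times\mathbf{N}$, both sides are finite measures of mass $\nu(A_0)$, and the constant $\one_{A_0}$ lies in the class (take $h=0$). Agreement on all bounded measurable functions then follows, and you finish by summing over a partition $\{A_k\}$. Your opening ``reduction'' paragraph, and the independence of the pieces $X_\nu\cap A_k$, are never used and can be dropped.

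Two interpretive points are worth making explicit:
\begin{enumerate}
\item For $\nu$ with atoms the identity holds only if $X_\nu$ is a counting measure and $X_\nu\cup\{x\}$ means $X_\nu+\delta_x$. Your formula $\mu(h)=\sum_{y\in\mu}h(y)$ tacitly assumes this; it is irrelevant for the absolutely continuous $\nu$ the paper actually uses.
\item A merely $\sigma$-finite $\nu$ can have infinite mass near a point. So $\mathbf{N}$ should be the space of countable sums of Dirac masses, not locally finite configurations.
\end{enumerate}

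The main alternative route is more elementary. Write the restriction of $X_\nu$ to a finite-measure set as a Poisson number of i.i.d.\ points with law $\nu/\nu(A)$, and verify the identity directly by exchangeability. Then pass to $\sigma$-finite $\nu$ by superposing independent pieces and conditioning on the rest. Your Laplace-functional argument is shorter once the formula for $L$ is available. The price is the measure-determination (monotone class) step, which the binomial route avoids entirely.
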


We note that iterating \cref{Palm theory} yields a multivariate version, also known as the multivariate Mecke equation or the factorial Mecke equation (see \cite[Theorem 4.4]{last2018lectures} for a precise statement).

For any $r > 0$ we let $G_{\R^d}^\lambda(r)$ be the random geometric graph on $\R^d$ where the vertex set is a Poisson process of intensity $\lambda$ and points are connected by an edge if they are within distance at most $r$.  Note that for any $r > 0$ we have that the pushforward
of $G_{\R^d}^{\lambda}(r)$ 
under the map
$\R^d \to \R^d, x \mapsto \frac{1}{r}x$
has distribution equal to that of $G_{\R^d}^{\lambda/r^d}(1)$, and so we can always rescale Euclidean space so the connection radius is $r = 1$.

For each $d \geq 2$, there exists $\lambda_c(d) < \infty$ such that 
\[
   \Prob( G_{\R^d}^{\lambda}(1) \mbox{ has an infinite connected component} ) = 
   \begin{cases}
      0 & \lambda < \lambda_c(d) \\
      1 & \lambda > \lambda_c(d)
   \end{cases}
\]
and in fact there is always at most one infinite connected component \cite[Theorem~9.19]{Pen03}.
We write $C_\infty(\lambda)$ for the almost-surely unique infinite connected component of $G_{\R^d}^\lambda(1)$ for $\lambda > \lambda_c(d)$.  

By \cref{Palm theory}, the average degree $\Delta$ of $G_{\R^d}^\lambda(1)$ satisfies  $\Delta = \lambda \alpha_d$ where $\alpha_d := \mathcal{L}(B_{\R^d}(0,1))$ is the volume of the unit ball in $\R^d$.  The critical average degree $\Delta_c(d)$ can thus be compared to $\lambda_c(d)$ via $\Delta_c(d) = \alpha_d\lambda_c(d).$ 

For $\lambda > \lambda_c(d)$ and $x \in \R^d$, we may write $\mathring{x} = \mathring{x}(\lambda) \in C_\infty(\lambda)$ to be the closest point in $C_\infty(\lambda)$ to $x$.  Writing $d_\lambda$ for the graph distance, we are interested in understanding the behavior of $d_\lambda(\mathring{x},\mathring{y})$ for distant points $x,y \in \R^d$.  A quantitative version of the shape theorem will say that these large distances behave in deterministic ways.  We require a bit more notation to set this up. For events concerning $G_{\R^d}^\lambda$, we let $\P_{x,y}$ denote the probability measure conditioned on $x$ and $y$ lying in the vertex set.  For a graph $G$ and vertices $x,y \in V(G)$, write $x \xleftrightarrow{G} y$ for the event that $x$ and $y$ are connected in the graph.   The deterministic function $\mu:(\lambda_c,\infty) \to (1,\infty)$ is referred to as the \emph{time constant} and is dependent only on the dimension; we may view the following as a definition of $\mu(\lambda)$:

    \begin{thm}[Yao-Chen-Guo, \cite{YCG11}] \label{large-deviations-time-constant}
        Fix $\lam > \lam_c(d)$.   Let $G^\lambda = G^{\lam}_{\R^d}(1)$ and let  $d_\lambda$ denote the graph distance
        on $G^{\lam}$. Then, for any $\xi > 0$ there exists 
        $c(\lam,\xi,d)>0$ such that for all $x,y \in \R^d$ we have
        \[
            \Prob_{x,y}\left( x \xleftrightarrow{G^{\lam}} y,
            \frac{d_\lam(x,y)}{\mu(\lam)\|x - y\|}
            \notin [e^{-\xi}, e^\xi] 
            \right)
            \le \exp(-c\|x - y\|).
        \]
    \end{thm}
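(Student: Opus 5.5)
Since this is the cited result of Yao--Chen--Guo, the plan is to reconstruct the standard first-passage argument for chemical distances on a supercritical cluster. There are three steps: define $\mu(\lambda)$ by subadditivity, prove a linear a priori bound with exponential tails by renormalization, and upgrade to large deviations in each direction separately. Throughout I will work with $T(x,y) := d_\lambda(\mathring x,\mathring y)$. This quantity is defined for all $x,y\in\R^d$ and is almost surely subadditive up to an error term: $T(x,z)\le T(x,y)+T(y,z)$ exactly, since $\mathring x,\mathring y,\mathring z$ all lie in the same infinite cluster $C_\infty(\lambda)$. At the end I will pass back to $\P_{x,y}$ on the event $\{x\xleftrightarrow{G^\lambda}y\}$ using \cref{Palm theory}. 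The transfer needs one fact: on that event, with probability $1-e^{-c\|x-y\|}$, both $x$ and $y$ lie in $C_\infty$ (or within distance $O(1)$ of it). This is the standard exponential decay of the radius of finite clusters in the supercritical regime.

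\textbf{Step 1: linear a priori bound via renormalization.} Tile $\R^d$ into boxes of side $L$. Call a box good if two things hold: the Poisson graph restricted to the box and its neighbors has a unique crossing cluster, and any two points of that cluster are joined inside the neighborhood by a path of length at most $C L^{d}$. For $\lambda>\lambda_c$ and $L$ large, good boxes are a finite-range dependent field with density close to $1$. By Liggett--Schonmann--Stacey, this field dominates highly supercritical Bernoulli site percolation. The Antal--Pisztora argument on the coarse lattice then gives constants $C,c$ with $\P(T(x,y)>C\|x-y\|)\le e^{-c\|x-y\|}$. I also record two related bounds: $\|x-\mathring x\|$ has exponential tails, and $T(x,y)\ge \|x-y\|-2$ holds trivially because edges have length at most $1$. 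Together these show that $T$ is integrable, so Kingman's subadditive ergodic theorem applies along each ray. This gives a norm-like function $\mu(\lambda)\|v\|$, which is rotation invariant because the model is isotropic. Finally, $\mu>1$ strictly: a path with exactly $\|x-y\|$ hops would need nearly collinear points at spacing almost $1$, and a counting bound excludes this.

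\textbf{Step 2: upper deviations $T\le e^{\xi}\mu\|x-y\|$.} Choose $N$ large, so that $\E T(0,Nv)\le e^{\xi/2}\mu N$. Split the segment from $x$ to $y$ into $m=\|x-y\|/N$ pieces. The crossing times of pieces spaced far apart are nearly independent; to make this precise, use only nearby randomness, by truncating to paths in a tube, which Step 1 justifies. A Chernoff bound over the $m$ pieces then gives an exponential tail in $m$, and hence in $\|x-y\|$. Bad pieces are charged their a priori cost $C N$ from Step 1, which is affordable when the density of bad pieces is small.

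\textbf{Step 3: lower deviations $T\ge e^{-\xi}\mu\|x-y\|$.} This is the main obstacle, because the events in question are not increasing and subadditivity only controls the mean from one side. I would first try the Kesten-style argument. Suppose $T(x,y)$ is unusually small. Then there is a lattice-animal-type path: a sequence of coarse blocks of side $N$ traversed from $x$ to $y$ with total cost below $e^{-\xi}\mu\|x-y\|$. Each block crossing is locally a first-passage time between points at distance about $N$. By the ergodic limit, each such crossing is at least $e^{-\xi/2}\mu N$ except with probability $\epsilon_N\to0$. Union-bounding over the $e^{O(m)}$ coarse block sequences, and using a BK/van den Berg--Kesten disjoint-occurrence inequality for Poisson processes on the disjointly witnessed short crossings, yields a bound of the form $(e^{O(1)}\epsilon_N)^{cm}$. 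This is exponentially small once $N$ is large. Combining Steps 2 and 3 with the Palm transfer described at the start gives the stated bound $\exp(-c\|x-y\|)$.
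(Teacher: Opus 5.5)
The paper does not prove this statement. It is imported from \cite{YCG11} and effectively serves as the definition of $\mu(\lambda)$, so there is no internal argument to match. Your skeleton is the standard one, as in Garet--Marchand's treatment of chemical distance in Bernoulli percolation: a subadditive definition of $\mu$, a continuum Antal--Pisztora a priori bound (cf.\ \cref{non-uniform-Antal-Pisztora}), then the two deviation directions handled separately.

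The genuine gap is in Step~2. First, restricting paths to a tube does not make $T(z_{i-1},z_i)$ local, because the anchors $\mathring z_i$ are defined through membership in $C_\infty$, which is a global property. More seriously, on a bad piece your only control is Step~1's tail bound $\P(T(z_{i-1},z_i)>CN)\le e^{-cN}$. With $N$ fixed and $m\to\infty$, this is not small enough to union-bound over $\asymp m$ pieces. The costs of different bad pieces are also global and dependent, so ``charge each bad piece $CN$'' gives no bound of order $e^{-cm}$. Bad pieces also cluster, and bridging a long bad stretch is one global detour, not a sum of per-piece costs.

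What is missing is the Antal--Pisztora \emph{mechanism}, not just its conclusion. One way to supply it:
\begin{itemize}
\item Declare a coarse box good via a purely local event: a unique large local cluster, so that local anchors of adjacent good boxes are connected with no reference to $C_\infty$, together with local chemical distance at most $e^{\xi/2}\mu N$ between anchors of neighbouring boxes. This event has probability $\to1$ by the a.s.\ limit.
\item Dominate the good boxes by high-density Bernoulli percolation.
\item Route around each black cluster through its white boundary, at deterministic cost $O(N)$ per boundary box.
\item Use a Peierls estimate to show that the total size of black clusters met by the straight coarse path is at most $\delta m$, except with probability $e^{-cm}$.
\end{itemize}

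Step~3 is the usual Kesten/BK argument and does work, but your diagnosis is backwards. For fixed endpoints, $\{x\xleftrightarrow{G^\lambda}y,\ d_\lambda(x,y)\le k\}$ is \emph{increasing} in the Poisson configuration, which is exactly why BK applies. It is the upper deviation that is non-monotone (adding points can both create the connection and shorten it), and that is where the real work lies.

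Also, ``by the ergodic limit'' does not by itself give $\epsilon_N\to0$. Your block events quantify over all entry points in a block and all exit points at distance about $N$, so you need a uniform point-to-sphere estimate. One way is to apply the fixed-base shape theorem at a finite net of base points spaced $\eta N$, and use the Antal--Pisztora bound to move to the net. This also keeps the number of coarse sequences at $e^{O(m)}$, independent of $N$.

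Finally, read literally (with no prefactor), the statement is false for small separations. If $0<\|x-y\|<e^{-\xi}/\mu(\lambda)$, then under $\P_{x,y}$ the two points are adjacent, so $d_\lambda(x,y)=1$ and the event has probability one. What your argument (and \cite{YCG11}) actually gives is $Ce^{-c\|x-y\|}$, equivalently the bound for $\|x-y\|$ beyond a constant. That suffices for the paper, which only applies the result with $\|p-q\|\ge\log^2 n$.
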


    We note that $c$ may depend on $\lambda$, and so we will see that we will only apply \cref{large-deviations-time-constant} for a finite collection of $\lambda$ which we will specify in \cref{sec:setting-up-processes}.  Using the relationship $\Delta = \alpha_d \lambda$, we may write $\mubar(\Delta) := \mu(\Delta/\alpha_d)$.
    
Since in $G_{\R^d}^\lambda(1)$ we always have $d_\lambda(x,y) \geq \|x - y\|$, we immediately have that $\mu(\lambda) \geq 1$ for all $\lambda > \lambda_c$.  A quick consequence of \cref{large-deviations-time-constant} will be that $\mu(\lambda)$ is non-increasing.  We need one more simple property, namely that $\mu(\lambda) \downarrow 1$ as $\lambda \to \infty$.

\begin{lem}\label{lem:limit-time-constant}
    The time constant $\mu(\lambda)$ is non-increasing with $\lim_{\lambda \to \infty} \mu(\lambda) = 1\,.$
\end{lem}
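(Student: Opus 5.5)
Monotonicity comes from a coupling, and the limit from a direct construction of short paths in a dense Poisson cloud; in both cases \cref{large-deviations-time-constant} turns almost-sure inequalities between graph distances into inequalities between time constants.

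\emph{Monotonicity.} Fix $\lambda_c < \lambda_1 < \lambda_2$. Realize $G^{\lambda_2}_{\R^d}(1)$ as the superposition of $G^{\lambda_1}$ with an independent Poisson process of intensity $\lambda_2 - \lambda_1$. The vertex set of $G^{\lambda_1}$ is contained in that of $G^{\lambda_2}$, and edges depend only on Euclidean distance. Hence whenever $x,y$ are connected in $G^{\lambda_1}$ they are connected in $G^{\lambda_2}$, and $d_{\lambda_2}(x,y) \le d_{\lambda_1}(x,y)$.

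Take $x = 0$ and $y = L e_1$, and work under $\P_{x,y}$; adding the two points to both processes preserves the coupling. The event $\{x \leftrightarrow y \text{ in } G^{\lambda_1}\}$ has probability bounded below uniformly in $L$. This follows from uniqueness of $C_\infty$ together with positivity and FKG for the events that $0$ and $Le_1$ each lie in the infinite cluster. On this event, \cref{large-deviations-time-constant} applied at both intensities gives, with probability $1 - 2e^{-cL}$,
\[
\mu(\lambda_2) e^{-\xi} L \le d_{\lambda_2}(x,y) \le d_{\lambda_1}(x,y) \le \mu(\lambda_1) e^{\xi} L .
\]
For $L$ large this probability is positive, so $\mu(\lambda_2) \le e^{2\xi}\mu(\lambda_1)$. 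Letting $\xi \to 0$ gives $\mu(\lambda_2) \le \mu(\lambda_1)$.

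\emph{Limit.} Since $\mu \ge 1$ and $\mu$ is monotone, it suffices to show $\limsup_{\lambda\to\infty} \mu(\lambda) \le 1 + \eta$ for each $\eta > 0$. Fix $\eta$ and a small $\rho > 0$. Along the segment from $0$ to $Le_1$, place balls $B_k$ of radius $\rho$ centered at $k s e_1$ with $s = 1 - 2\rho$. Any point of $B_k$ is within distance $s + 2\rho = 1$ of any point of $B_{k+1}$, so choosing one Poisson point in each ball produces a path from a point near $0$ to a point near $Le_1$ with about $L/(1-2\rho) + O(1)$ edges. The first and last balls contain $x$ and $y$ themselves if we take $B_0$ and $B_{\lceil L/s \rceil}$ centered at $x$ and $y$.

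Each ball is empty with probability $e^{-\lambda \alpha_d \rho^d}$. A union bound over the roughly $L/s$ balls gives failure probability at most $(L/s+1)e^{-\lambda\alpha_d\rho^d}$. The main obstacle is that this bound deteriorates as $L \to \infty$, while \cref{large-deviations-time-constant} is only useful for large $L$. The fix is to use a fixed $L$ and a large $\lambda$. Choose $L$ large enough that $e^{-c(\lambda_0,\xi)L}$ is small at some fixed $\lambda_0$. Because of monotonicity we would like the time constant estimate at the single intensity $\lambda$, but $c$ depends on $\lambda$. So for each $\lambda$ separately, pick $L = L(\lambda)$ with $e^{-c(\lambda,\xi)L} < 1/2$; then pick $\lambda$ so large that $(L/s+1)e^{-\lambda\alpha_d\rho^d} < 1/2$.

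The last choice is circular, since $L$ depends on $\lambda$. I would resolve this by noting that the path event is an intersection of independent events. The path construction shows that with probability $\prod_k (1 - e^{-\lambda\alpha_d\rho^d}) \ge \exp\big(-(L/s+1) \cdot 2e^{-\lambda\alpha_d\rho^d}\big)$, which for large $\lambda$ is at least $e^{-\epsilon' L}$ with $\epsilon' \ll c$, we have $x \leftrightarrow y$ and $d_\lambda(x,y) \le L/(1-2\rho) + 2$. This requires $c(\lambda,\xi)$ not to vanish as $\lambda$ grows. If that is not available, I would instead compare with a fixed $\lambda_0$ via renormalization. Granting it, this probability exceeds $e^{-cL}$ for large $L$, so the path event must intersect the good event of \cref{large-deviations-time-constant}. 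On that intersection,
\[
\mu(\lambda) e^{-\xi} L \le \frac{L}{1-2\rho} + 2 ,
\]
and letting $L \to \infty$, $\xi \to 0$, and $\rho \to 0$ gives $\limsup_{\lambda\to\infty} \mu(\lambda) \le 1$.
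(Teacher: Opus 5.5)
Your monotonicity argument is sound. The superposition coupling, the bound $\P_{x,y}(x\leftrightarrow y)\ge\theta(\lambda_1)^2$ from Harris--FKG plus uniqueness of $C_\infty$, and \cref{large-deviations-time-constant} applied at both intensities are exactly what is needed to make rigorous the paper's remark that monotonicity follows from that theorem.

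The limit argument has a genuine gap, and it sits precisely at the step you ``grant''. Write $s=1-2\rho$ and $E_L$ for the event that all $\approx L/s$ balls are occupied. You need $\P_{x,y}(E_L)\ge\exp(-2(L/s+1)e^{-\lambda\alpha_d\rho^d})$ to exceed $e^{-c(\lambda,\xi)L}$, which amounts to $c(\lambda,\xi)>2e^{-\lambda\alpha_d\rho^d}/s$. \cref{large-deviations-time-constant} gives no control of how $c$ depends on $\lambda$, and the paper explicitly warns that it may depend on $\lambda$. Moreover, this hypothesis is essentially the conclusion itself. Suppose $\mu(\lambda)e^{-\xi}>1/s$. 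Then for large $L$ the event $E_L$ forces $x\leftrightarrow y$ with $d_\lambda(x,y)\le L/s+1<e^{-\xi}\mu(\lambda)L$. So $E_L$ lies inside the exceptional event of the theorem, and every admissible constant satisfies $c(\lambda,\xi)\le 2e^{-\lambda\alpha_d\rho^d}/s\to0$. A uniform lower bound on $c(\lambda,\xi)$ therefore holds only once you already know $\mu(\lambda)\le e^{\xi}/s$.

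The obstruction is structural. At fixed $\lambda$, any event built from a bounded-width tube of length $L$ has probability decaying exponentially in $L$. Playing it against a large-deviation bound as $L\to\infty$ always means comparing two uncontrolled rates. The fallback ``renormalization against $\lambda_0$'' is not spelled out and does not obviously get around this.

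The missing idea is to avoid sending $L\to\infty$ at all, by using the subadditive characterization of the time constant as the paper does. Since $\mu(\lambda)$ comes from Kingman's theorem applied to $d_\lambda(\mathring 0(\lambda),\mathring n(\lambda))$, one has $\mu(\lambda)\le\E\,d_\lambda(\mathring 0(\lambda),\mathring m(\lambda))/m$ for every fixed $m$. At a fixed scale $m$ your tube has only $O(m)$ balls, so it is fully occupied with probability tending to $1$ as $\lambda\to\infty$. Combined with $\mathring 0(\lambda)\to0$, $\mathring m(\lambda)\to me_1$, and a uniform integrability bound (which the paper handles by dominated convergence), this gives $\limsup_{\lambda\to\infty}\mu(\lambda)\le(m/s+2)/m$. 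Then let $m\to\infty$ and $\rho\to0$.

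The paper runs this with $m=1$. Keeping $m$ large is safer, since $\|\mathring 0(\lambda)-\mathring 1(\lambda)\|>1$ with probability tending to $1/2$, so $d_\lambda(\mathring 0,\mathring 1)\ge2$ with non-vanishing probability. Your ball construction supplies exactly the clean fixed-scale bound this route needs.
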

\begin{proof}
    We note that the fact that $\mu(\lambda)$ is non-increasing is a consequence of \cref{large-deviations-time-constant}.  To see that $\lim_{\lambda \to \infty} \mu(\lambda) = 1$, let $C_\infty(\lambda)$ denote the unique infinite cluster of $G_{\R^d}^\lambda(1)$.  For $x \in \R^d$ let $\mathring{x}(\lambda) \in C_\infty(\lambda)$ denote the closest point $C_\infty(\lambda)$ to $x$.  Let $d_\lambda$ denote the graph distance on $C_\infty(\lambda)$.  The constant $\mu(\lambda)$ is defined as the almost-sure limit $$\lim_{n \to \infty} \frac{d_\lambda(\mathring{0}(\lambda),\mathring{n}(\lambda))}{n} = \mu(\lambda)$$
    where we write $0 = (0,0,\ldots,0) \in \R^d$ and $n = (n,0,\ldots,0) \in \R^d$.  This limit is shown to exist by a subadditive ergodic theorem argument in \cite{YCG11} and so in fact we also have $\mu(\lambda) \leq \E d_\lambda(\mathring{0}(\lambda),\mathring{1}(\lambda))$. Note that if we take $\lambda \to \infty$ we almost surely have $d_\lambda(\mathring{0}(\lambda),\mathring{1}(\lambda)) \to 1.$  By the dominated convergence theorem we see that $\E d_\lambda(\mathring{0}(\lambda),\mathring{1}(\lambda)) \to 1$ completing the proof.
\end{proof}

We also need a non-trivial property of the time constant, namely that in each dimension $d$ the function $\lambda \mapsto \mu(\lambda)$ is continuous.  Recently, the first two authors proved Lipschitz continuity, and we refer to \cite{DG25} and the references therein for more context:

\begin{thm}[Theorem 1, Dubin and Gorski ~\cite{DG25}]\label{Continuity of Time Constant} 
    For each $d \geq 2$ and $\lam_0>\lam_c(d)$, there exists $C(\lam_0,d)$ such that 
    \[
       |\mu({\lam}) - \mu({\lam'})|
       \le C(\lam_0)|\lam-\lam'|
    \]
    for each $\lam,\lam' \in [\lam_0,\infty)$.
\end{thm}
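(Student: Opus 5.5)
The plan is to prove the bound through a Russo--Margulis type formula for the Poisson intensity, and to control the resulting derivative with a local ``detour'' argument based on block renormalization. Since $\mu$ is non-increasing (\cref{lem:limit-time-constant}), it suffices to show that for $\lambda_0 \le \lambda < \lambda'$ we have $\mu(\lambda) - \mu(\lambda') \le C(\lambda_0)(\lambda' - \lambda)$.

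\textbf{Derivative formula.} Fix $n$ and the truncated passage time $T_n(X) = d(\mathring{0},\mathring{n})$ on $G^{\lambda}_{\R^d}(1)$, capped at $Kn$ for a large constant $K$ so that all quantities are integrable. For Poisson processes, the derivative of $\lambda \mapsto \E_\lambda T_n$ is
\[
\frac{d}{d\lambda}\E_\lambda T_n = \int_{\R^d} \E_\lambda\bigl[T_n(X\cup\{x\}) - T_n(X)\bigr]\,dx .
\]
This is the Margulis--Russo formula for Poisson processes, obtained by superposing an independent process of intensity $h$ and letting $h \to 0$. Using the Mecke formula (\cref{Palm theory}), I would rewrite the absolute value as
\[
\frac{1}{\lambda}\,\E_\lambda\Bigl[\sum_{x\in X}\bigl(T_n(X\setminus\{x\}) - T_n(X)\bigr)\Bigr].
\]
The summand is zero unless $x$ lies on every geodesic, and in particular on a fixed chosen geodesic $\gamma$. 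So it suffices to show
\[
\E_\lambda \sum_{x\in\gamma}\bigl(T_n(X\setminus\{x\})-T_n(X)\bigr) \le Cn
\]
uniformly in $\lambda \ge \lambda_0$. I would then integrate in $\lambda$, divide by $n$, and let $n\to\infty$, using the almost-sure and $L^1$ convergence from \cite{YCG11}. The cap and the choice of $\mathring{0},\mathring{n}$ need some care, since removing $x$ can change the closest cluster point. However, this costs only $O(1)$ per endpoint in expectation, by exponential tails for the distance to $C_\infty$.

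\textbf{Detours via renormalization.} Tile $\R^d$ into boxes of a large side $L$. Call a box \emph{good} (at level $\lambda_0$) if it satisfies all of the following:
\begin{itemize}
\item its $3L$-neighborhood has a unique crossing cluster;
\item every vertex in the box adjacent to another cluster vertex has a connection to that crossing cluster of bounded chemical length, even after deleting any single vertex;
\item any two points of the crossing cluster in adjacent boxes are joined within chemical distance $C_L$.
\end{itemize}
This condition is monotone-friendly when phrased via the coupled superposition $X_{\lambda_0}\subset X_\lambda$. Pisztora-type coarse graining for continuum percolation makes the good boxes dominate a high-density Bernoulli site percolation, uniformly in $\lambda\ge\lambda_0$.

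If $x\in\gamma$ lies in a good box, then removing $x$ costs at most $C_L$: reroute from the predecessor to the successor of $x$ through the crossing cluster. If $x$ lies in a bad box, let $\mathcal{B}(x)$ be the $*$-connected cluster of bad boxes containing it. The detour can go around $\mathcal{B}(x)$ through its good boundary, at cost at most $C_L|\mathcal{B}(x)|^{d}$ (crudely). The number of vertices of $\gamma$ in a box is also bounded by the number of Poisson points there, which has Poisson tails.

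\textbf{Main obstacle.} The hardest step is that $\gamma$ is random and may be attracted to bad regions. I would handle this with a lattice-animal/greedy-path bound in the style of Antal--Pisztora. Since $|\gamma|\le T_n\le Kn$, the geodesic visits a $*$-connected set of at most $C n$ boxes. With probability at least $1-e^{-cn}$, every such connected set $\Gamma$ satisfies
\[
\sum_{B\in\Gamma} N(B)\bigl(1+|\mathcal{B}(B)|^{d}\bigr) \le Cn,
\]
where $N(B)$ is the number of points in $B$. This follows from a union bound over the $e^{O(n)}$ lattice animals containing the origin's box, together with exponential moments of the summands. The summands are only finitely dependent after conditioning on the bad clusters, so I would use the standard domination of bad-cluster sizes by a subcritical Bernoulli cluster. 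On the exceptional event, the cap $Kn$ bounds everything by $O(n^2e^{-cn})$. Combining the three steps gives $|\frac{d}{d\lambda}\E_\lambda T_n|\le Cn/\lambda_0$, hence the Lipschitz bound with $C(\lambda_0)$.
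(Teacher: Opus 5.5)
The paper does not prove this statement. It is quoted from \cite{DG25}, so your proposal has to be judged on its own merits. The architecture is a reasonable route: Margulis--Russo in the intensity, the Mecke formula to rewrite the derivative (up to sign) as $\lambda^{-1}\E\sum_{x\in X}(T_n(X\setminus\{x\})-T_n(X))$, and renormalized bypasses of pivotal points of a geodesic. However, the step you yourself flag as the main obstacle is not closed by the argument you give. That step is exactly where a genuine Lipschitz bound, as opposed to one with a logarithmic loss, is won or lost.

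\textbf{The gap: superlinear cluster contributions.} The contribution of a bad cluster grows faster than its size.
\begin{itemize}
\item A pivotal $x\in\gamma$ inside a bad cluster $\mathcal{B}$ can cost order $|\mathcal{B}|$ to remove, even with the best bypass through the good boundary. Your bound $C_L|\mathcal{B}|^d$ is worse still.
\item $\gamma$ can contain order $|\mathcal{B}|$ such points. Think of a thin corridor, every point of which is pivotal, whose detour is comparably long.
\item So you must control at least $\sum_{\mathcal{B}\cap\gamma\neq\emptyset}|\mathcal{B}|^2$. Your weights $N(B)(1+|\mathcal{B}(B)|^d)$ give order $|\mathcal{B}|^{d+1}$ per cluster.
\end{itemize}
These summands have only stretched-exponential tails. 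They have no exponential moments, so a union bound over $e^{O(n)}$ lattice animals cannot beat the entropy. In fact your displayed event fails with probability at least $e^{-cn^{1/(d+1)}}$: a straight row of $(Cn)^{1/(d+1)}$ bad, nonempty boxes through the origin is already a lattice animal with sum at least $Cn$.

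Antal--Pisztora counting controls only linear quantities such as $\sum_{\mathcal{B}\cap\Gamma\neq\emptyset}|\mathcal{B}|\le\beta|\Gamma|$. Combined with $\max|\mathcal{B}|=O(\log n)$, this gives $O(n\log n)$, which yields nothing after dividing by $n$ and letting $n\to\infty$. For Bernoulli percolation on $\Z^d$, bypass arguments of this kind first gave Lipschitz continuity of the time constant only up to a logarithmic factor. Removing the logarithm required a multiscale control of bypass lengths.

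\textbf{What is missing.} You need a greedy-lattice-animal bound for cluster-size weights. One way to get it:
\begin{itemize}
\item Split bad clusters by dyadic size $2^j$.
\item Use the BK inequality for disjoint bad clusters, together with a Cox--Gandolfi--Griffin--Kesten coarse graining. This should show that the maximal number of clusters of size at least $2^j$ met by an animal of size $m$ has expectation of order $m e^{-c2^j/d}$, up to lower-order terms.
\item Sum: $\sum_j 4^j m e^{-c2^j/d}=O(m)$.
\end{itemize}
Against your cap $Kn$, a failure probability $O(1/n)$ suffices; a bound of the form $1-e^{-cn}$ is simply not available.

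\textbf{A second, fixable error.} Your second good-box condition fails with probability tending to one as $L\to\infty$. Pieces of the cluster attached through a single vertex occur with positive density, and deleting that vertex cuts them off. So good boxes would not dominate a high-density Bernoulli field. Replace the condition by robustness of the unique crossing cluster under single deletions. That suffices, because the predecessor and successor of $x$ on $\gamma$ carry long geodesic arms that avoid $x$.
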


    \subsection{Uniqueness of the giant component} \label{sec:uniqueness}

    For a random geometric graph $G$ we will write $\mathring{G}$ for the component of largest diameter in the underlying metric.\footnote{In the regimes we consider,
    this will with high probability coincide
    with the component of largest diameter in the \emph{graph} metric; for the most important case, see \cref{cor:graph-diam-giant}.}  Throughout, we will make use of the fact that there is only one component of large diameter.  We start with a version of this statement in Euclidean space for possibly distorted intensity measure and metric:

    \begin{lem} \label{lem:non-uniform-uniqueness}
        Fix $\lam_0 > \lam_c(d)$ and let $\phi_s \le s$ be an increasing
        function of $s$.  
        Then there exist $c(d,\lam_0) > 0, C = C(d) > 0$
        such that the following holds.

        Let $\sigma$ be a metric on $\R^d$
        which satisfies
        \[
            \sigma(x,y) \le \|x - y\| \le 2\sigma(x,y)
        \]
        for all $x,y \in \R^d$.
        Let  
        $\nu$ be a measure on
        $\R^d$ which satisfies 
        $\frac{d\nu}{d\mathcal{L}} \ge \lam_0$. 
        Let $G_s$ be the random geometric
        graph
        whose vertices are the
        points of a Poisson process on $[-s,s]^d$
        with intensity measure $\nu$
        and whose edge set consists
        of pairs of vertices with
        $\sigma$-distance at most 1.

        Then
        \[
        \Prob\left(
        \begin{array}{c}
            \mbox{Exactly one component of } G_s \\
            \mbox{ has Euclidean diameter at least } \phi_s
        \end{array}
        \right)
        \ge 1- C s^{d}\exp(-c \phi_s).
        \] 
    \end{lem}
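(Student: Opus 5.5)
We reduce to a monotone coupling with a homogeneous, Euclidean-metric model and then apply a standard crossing/uniqueness argument at scale $\phi_s$. Since $\frac{d\nu}{d\mathcal L}\ge\lam_0$, we can write the Poisson process with intensity $\nu$ as the superposition of a Poisson process $X_0$ of intensity $\lam_0\mathcal L$ and an independent one of intensity $\nu-\lam_0\mathcal L$. Since $\sigma(x,y)\le\|x-y\|$, every pair at Euclidean distance at most $1$ is joined in $G_s$. Hence the Euclidean random geometric graph $H$ on $X_0\cap[-s,s]^d$ with radius $1$ is a subgraph of $G_s$. The upper bound $\|x-y\|\le 2\sigma(x,y)$ says that every edge of $G_s$ has Euclidean length at most $2$. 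This is what controls the \emph{extra} components: it lets us argue that any $G_s$-path of large Euclidean extent must pass within distance $2$ of many points, and so must approach the big cluster of $H$.

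\textbf{Existence of a large component of $H$.} Tile $[-s,s]^d$ by boxes of side $L$ (a large constant depending on $d,\lam_0$), together with their $3L$-enlargements. Since $\lam_0>\lam_c(d)$, classical supercritical continuum percolation facts (e.g.\ \cite[Chapter~10]{Pen03}, obtained via the Grimmett--Marstrand/renormalization techniques) give the following: for each box $B$, with probability at least $1-e^{-cL}$ the box is \emph{good}. Good means that $H$ restricted to the enlarged box has a unique crossing cluster, this cluster crosses $B$ in every direction, and it intersects every ball of radius $\sqrt d$ meeting $B$ outside small holes. Goodness also forces every other cluster in the enlarged box to have diameter less than $L/10$. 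Good boxes glue: the crossing clusters of adjacent good boxes belong to the same cluster of $H$. A Peierls/$*$-connected blocking-set argument then shows that, off an event of probability $Cs^d e^{-c\phi_s}$, every $*$-connected set of bad boxes has diameter at most $\phi_s/10$. Here we take $L$ fixed and union bound over $O(s^d)$ starting boxes and over lattice animals of size roughly $\phi_s/L$. Off this event, all the crossing clusters of good boxes lie in one component $\mathcal C$ of $H$, and $\mathcal C$ has diameter at least $\phi_s$ as soon as $s\ge\phi_s$.

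\textbf{Uniqueness.} This is the main obstacle, because $G_s$ has extra points and longer edges that are not controlled by $H$. Suppose a component $K$ of $G_s$ not containing $\mathcal C$ has Euclidean diameter at least $\phi_s$. Since consecutive vertices of $K$ are within Euclidean distance $2$, the set of $L$-boxes that $K$ visits is $*$-connected and has diameter at least $\phi_s/L$. By the structure above, $K$ must pass through some good box $B$ far from any large bad cluster, and must cross it or its enlargement over a distance of at least $L/2$. The plan is then to show that such a passage forces a $G_s$-edge between $K$ and $\mathcal C$. The crossing cluster of $B$ is $(\sqrt d+o(1))$-dense there, meaning every point of $B$ is within a bounded distance of a point of $\mathcal C$. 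But $K$'s vertices are only known to be close to $\mathcal C$ in Euclidean distance at most about $2$ or so, which does not by itself produce a $\sigma$-edge.

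To handle this, strengthen "good" to require that every point of $B$ lies within Euclidean distance $1/2$ of a vertex of $\mathcal C$. This is still an event of probability at least $1-e^{-cL}$ at intensity $\lam_0$: cover $B$ by $O(L^d)$ balls of radius $1/4$, and note that each contains a point with probability bounded below. To make that point belong to $\mathcal C$, the goodness event needs to be arranged via a renormalized density (shrink the radius-$1/4$ cells, taking $L$ large). Any vertex $v$ of $K$ inside $B$ then lies within Euclidean distance $1/2\le 1$ of a point $w\in\mathcal C$, so $\sigma(v,w)\le\|v-w\|\le 1$ and $v\sim w$ in $G_s$, a contradiction. The union bound over the $O(s^d)$ boxes and animals gives the claimed $1-Cs^de^{-c\phi_s}$. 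The delicate point I expect to spend effort on is exactly this dense-cluster version of goodness and its exponential bound at fixed $\lam_0$ for arbitrary $\lam_0>\lam_c$, which I would derive from the slab/renormalization results for the Boolean model.
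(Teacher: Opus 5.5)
There is a genuine gap in the uniqueness step. Your existence step is fine and matches the paper's: $\sigma\le\|\cdot\|$ and $d\nu/d\mathcal L\ge\lam_0$ put the homogeneous radius-$1$ graph $H$ at intensity $\lam_0$ inside $G_s$, so a component of Euclidean diameter at least $\phi_s$ is inherited from the homogeneous result.

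\textbf{The goodness event is not likely.} You strengthen goodness to require that every point of a box $B$ of side $L$ is within Euclidean distance $1/2$ of a vertex of $\mathcal C$. This event does not have probability $1-e^{-cL}$; its probability tends to $0$ as $L\to\infty$. The box $B$ contains at least $cL^d$ disjoint balls of radius $1/2$ centred in $B$. Independently, each of them contains no point of $X_0\supset\mathcal C$ with probability $e^{-\lam_0\alpha_d2^{-d}}>0$. So the event has probability at most $(1-e^{-\lam_0\alpha_d2^{-d}})^{cL^d}$. ``Each ball contains a point with probability bounded below'' points the wrong way: you would need each ball to be hit with probability close to $1$. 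Renormalization can make coarse-grained events likely, such as a crossing cluster in every sub-box of a large side $\ell$. It never gives a pointwise density property at the scale of the connection radius. If you allow holes instead, nothing stops $K$ from travelling through them. The region at distance more than $1$ from $\mathcal C$ has positive density and, near $\lam_c$, need not break into small islands. So no deterministic geometric property of $\mathcal C$ alone forces a $G_s$-edge between $K$ and $\mathcal C$. Note also that your box-level uniqueness concerns clusters of $H$ only. It says nothing about $G_s$-clusters built from the extra points and longer $\sigma$-edges, which is exactly where $K$ lives.

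\textbf{The missing idea.} Uniqueness is not a monotone event: extra points and edges can create new large clusters as well as merge old ones. So it cannot be transferred from $H$ through the coupling. Merging has to be proved for $G_s$ itself by a conditional, finite-energy type argument. The paper does this by adapting Penrose's proof (\cite[Lemmas~10.10, 10.12]{Pen03}). In $d\ge3$ one cuts the region into about $\phi_s/K'$ slabs of fixed width $K'$. The bound $\|x-y\|\le 2\sigma(x,y)$ guarantees that a component with vertices in two slabs has vertices in every intermediate slab. Within each slab, the conditional probability that two clusters entering it are joined inside it is bounded below, uniformly over translates and over $\nu,\sigma$. This holds because the only percolation input, \cite[Lemma~10.8]{Pen03}, is monotone and so holds at intensity $\lam_0$. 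Multiplying over slabs gives $e^{-c\phi_s}$, and a union bound over $O(s^d)$ positions gives the prefactor. In $d=2$ the paper instead uses horizontal and vertical domino crossings, again a monotone input. Your proposal needs an argument of this kind in place of the dense-cluster goodness event.
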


    This is a slightly adapted version of \cite[Prop.~10.13]{Pen03}.  In \cite{Pen03}, there is no distortion and so we indicate how to adapt the proof of \cite[Prop.~10.13]{Pen03} to our setting in \cref{sec:appendix}.
    
    From here we will deduce that in our setting the giant component is unique with high probability.

    \begin{lem}\label{lem:giant-is-unique}
        Let $\lambda_0 > \lambda_c$.  Then there are constants $C = C(\lambda_0,M) > 0, c = c(\lambda_0,M) > 0$ so that for all $\lambda \geq \lambda_0$ and $r\phi \leq  c$ we have \begin{equation*}
            \Prob\left(
        \begin{array}{c}
            \mbox{Exactly one component of } G_M^n(r) \\
            \mbox{ has }d_M\mbox{-diameter at least } r\phi
        \end{array}
        \right) \geq 1- C r^{-d}\exp(-c\phi)\,.
        \end{equation*}
    \end{lem}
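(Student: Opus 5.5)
We deduce \cref{lem:giant-is-unique} from \cref{lem:non-uniform-uniqueness} by a covering-and-gluing argument. Throughout, $\lambda = nr^d$.

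\textbf{Charts and rescaling.} Apply \cref{lem:almostflat} with a small fixed $\eta$ (say $e^{2\eta}\le 2$ and $e^{-2\eta}\lambda_0>\lambda_1$ for a fixed $\lambda_1\in(\lambda_c,\lambda_0)$). By compactness, finitely many balls $B_M(z_i,\delta_0/4)$, $i\le N=N(M)$, cover $M$, and each $B_M(z_i,\delta_0)$ carries normal coordinates $u_i$.

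In chart $i$, rescale by $1/r$: set $x\mapsto u_i(x)/r$. The image of the Poisson process is then Poisson on a Euclidean ball of radius $\delta_0/r$ with density in $[e^{-\eta}\lambda,e^{\eta}\lambda]$. Edges are pairs at distance at most $1$ in the pushed-forward metric $\sigma_i(a,b)=d_M(x,y)/r$, which is within a factor $e^{\eta}$ of Euclidean.

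To match the hypotheses of \cref{lem:non-uniform-uniqueness} exactly ($\sigma\le\|\cdot\|\le 2\sigma$ on all of $\R^d$), absorb the constant $e^{\eta}$ into the scaling. Extend the metric outside the chart by Euclidean gluing, and extend the intensity outside the chart by superposing an independent Poisson process. The density stays at least $\lambda_1$.

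\textbf{Local uniqueness.} Work in a box $[-s,s]^d$ with $s\asymp\delta_0/r$ contained in the image of $B_M(z_i,\delta_0/2)$. Outside the chart nothing matters, since we only look at this box. \cref{lem:non-uniform-uniqueness} with threshold $\phi_s=\phi/4$ then says: with probability at least $1-Cr^{-d}e^{-c\phi}$, exactly one component of the local graph has Euclidean diameter at least $\phi/4$.

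The real graph $G_M^n(r)$ restricted to $B_M(z_i,\delta_0/2)$ agrees with this local graph.

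\textbf{Gluing.} Take a union bound over the $N$ charts and intersect the good events. Suppose two distinct components $A,B$ of $G_M^n(r)$ each have $d_M$-diameter at least $r\phi$.

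\emph{Local pieces.} Since $r\phi\le c\ll\delta_0$, component $A$ contains a point in some $B_M(z_i,\delta_0/4)$. Following $A$ from that point, one finds a connected piece of $A$ inside $B_M(z_i,\delta_0/2)$ of diameter at least about $r\phi/2$: either all of $A$ lies inside, or a path in $A$ exits the ball, which gives diameter about $\delta_0/4\ge r\phi$.

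\emph{Overlaps.} The hard bookkeeping is to find a single chart containing large pieces of both $A$ and $B$, or to propagate uniqueness through overlaps. For this I would additionally show that in each chart the unique large component is "large everywhere": it meets every sub-box of side about $\phi$, which is again a union bound over \cref{lem:non-uniform-uniqueness} applied on small boxes. Then the large component of chart $i$ and that of an overlapping chart $j$ share vertices, so they are the same global component.

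\emph{Conclusion.} Connectedness of $M$ lets us chain charts, so every large local piece belongs to one global component. Hence $A=B$, which gives uniqueness.

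\textbf{Existence.} Existence of at least one component of diameter at least $r\phi$ comes from the same event: the box's unique large component has diameter at least $\phi/4$ in rescaled units. Adjust constants so that the thresholds match.

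\textbf{Main obstacle.} I expect the main obstacle to be the chaining step. The local statement only guarantees uniqueness of components of large diameter; one must ensure that these local giants actually overlap between charts, which requires the extra "meets every sub-box" crossing estimate. The plan is to derive it from the same lemma applied on boxes of side about $\phi$, with a polynomial union bound absorbed into $Cr^{-d}$.
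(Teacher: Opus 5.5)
Your plan is correct and follows essentially the same route as the paper: a finite chart cover of $M$, \cref{lem:non-uniform-uniqueness} applied in each rescaled chart, gluing of the local giants through chart overlaps with chaining via connectedness of $M$, and a local-piece (Lebesgue-number) argument showing that every component of $d_M$-diameter at least $r\phi$ contains some local giant. The only variation is in the gluing step: the paper applies the lemma to a single fixed box $U_{i,j}$ inside each overlap, whose unique large cluster must lie in both $\mathring{G}_i$ and $\mathring{G}_j$. Your mesoscopic sub-boxes work for the same reason, but note that it is this containment of a sub-box's large cluster in both local giants, not the bare fact that each giant ``meets every sub-box,'' that forces the giants to share vertices, and one box per overlap already suffices.
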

    \begin{proof}
        Let $\eta > 0$ be small enough so that $\lambda_0 e^{-\eta} > \lambda_c.$  For each $z \in M$ let $\delta_z \in (0,1)$ and $u_z$ be guaranteed from \cref{lem:almostflat}; we may further reduce $\delta_z$ so that $[-\delta_z, \delta_z]^d$ lies in the image of $u_z$.  Define $V_z = u_z^{-1}((-\delta_z,\delta_z)^d)$ and note that $\{V_z\}_{z \in M}$ forms an open cover of $M$.  By compactness of $M$ there is a finite set $\{z_1,\ldots,z_k\}$ so that $\mathcal{U} = \{V_{z_j}\}_{j = 1}^k$ is an open cover of $M$.
        
        Define the graph $H$ with vertex set $z_1,\ldots,z_k$ with edges defined by $E(H) = \{\{i,j\} : V_{z_i} \cap V_{z_j} \neq \emptyset\}.$  Note that since $M$ is connected, the graph $H$ is connected too. For each pair $\{i,j\} \in E(H)$ we may find a point $w_{i,j} \in V_{z_i} \cap V_{z_j}$ and $\eps_{i,j}$ so that $B_M(w_{i,j},\eps_{i,j}) \subset V_{z_i} \cap V_{z_j}.$  By potentially further shrinking $\eps_{i,j}$ we may define $U_{i,j} = u_{z_i}^{-1}(u_{z_i}(w_{i,j}) + [-\eps_{i,j},\eps_{i,j}]^d)$.  Let $G_i$ and $G_{i,j}$ denote the induced subgraph of $G_M^n(r)$ whose vertices lie in $V_{z_i}$ and $U_{i,j}$ respectively.  Let $\mathcal{E}_i$ denote the event that $G_i$ contains exactly one component of $d_M$-diameter at least $r\phi$; define $\mathcal{E}_{i,j}$ analogously.  Note that by \cref{lem:non-uniform-uniqueness} we have \begin{equation*}
            \P\left(\bigcap_{j \in [k]} \mathcal{E}_j \cap \bigcap_{\{i,j\} \in E(H)} \mathcal{E}_{i,j} \right) \geq 1 -  C r^{-d} \exp(-c \phi)
        \end{equation*}
        for some $C,c > 0$ depending on $\eta,\lambda_0,M$.  We will intersect with the event $\mathcal{E} = \bigcap_{j \in [k]} \mathcal{E}_j \cap \bigcap_{\{i,j\} \in E(H)} \mathcal{E}_{i,j}$ and let $\mathring{G}_i$ and $\mathring{G}_{i,j}$ denote the component of largest diameter.
        
        Note that for each $\{i,j\} \in E(H)$ we have that $U_{i,j} \subset V_{z_i} \cap V_{z_j}$ and so $\mathring{G}_{i,j} \subset \mathring{G}_i \cap \mathring{G}_j$.  In particular, for each edge $\{i,j\} \in E(H)$, the largest components in $V_{z_i}$ and $V_{z_j}$ are in fact the same component of $G_M^n(r)$.  Since $H$ is connected, this shows that the components $\mathring{G}_i$ all lie in the same component of $G_M^n(r)$ which we may call $\kappa$.  
        
        To see that $\kappa$ is the only component of large diameter,  let $\alpha > 0$ be the Lebesgue covering number of $\mathcal{U} = \{V_{z_j}\}_{j = 1}^k$ meaning that if $d_M(x,y) < \alpha$ then there is some $i \in [k]$ so that $x,y \in V_{z_i}$.  Note that if there is a component $\kappa'$ of $G_M^n(r)$ of $d_M$-diameter $\geq r\phi$, then there are points $x,y \in \kappa'$ so that $d_M(x,y) \in [r\phi,r(\phi+1)]$.  By restricting $c$, we have that $r(\phi + 1) \leq \alpha/2$ and so there is some $i \in [k]$ so that $x,y \in V_{z_i}$.  We note that either $x,y$ are connected in $G_i$ or otherwise the path from $x,y$ exits $V_{z_i}$ which would require having two points of distance, $\alpha/2$ apart.  On event $\mathcal{E}$, this implies that $\kappa' \cap G_i$ has metric diameter at least $r\phi$ implying that $\kappa' \subset \mathring{G}_i$. This shows that $\kappa' \subset \kappa$, completing the proof.
    \end{proof}

    Lastly, we note that, with high probability, $\mathring{G}^n_M(r)$ is also
    the component of largest diameter \emph{in the graph metric}.
    Thus, in our statements of \cref{thm:degree} and \cref{thm:main},
    we may interpret the phrase ``component of largest diameter''
    as referring to either $d_M$ or the graph metric, which
    will be important for algorithmic applications.
    \begin{cor}\label{cor:graph-diam-giant}
        Let $\lambda_0 > \lambda_c$. There exist constants $C=C(\lambda_0,M), c=c(\lambda,M)>0$ such that
        \[
            \Prob\left( \begin{array}{c}
                \mathring{G}_M^n(r) \mbox{ is the largest component} \\
                \mbox{ of } G_M^n(r) \mbox{ in graph diameter} 
                \end{array} \right) \ge 1 - C \exp(-c \log^2 n)
        \]
    \end{cor}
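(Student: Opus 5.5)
Apply \cref{lem:giant-is-unique} with threshold $r\phi$, where $\phi = \log^2 n$ (or a constant multiple). Two things need checking. First, the lemma's hypothesis $r\phi \le c$ must hold. Second, the error term must be small enough: $C r^{-d}\exp(-c\phi) \le C n\lambda^{-1}\exp(-c\log^2 n)$, since $\lambda = nr^d$ gives $r^{-d} = n/\lambda \le n/\lambda_0$. The polynomial factor $n$ is then absorbed by shrinking the constant $c$. For the hypothesis: in the regime of \cref{thm:main}, $\Delta \le n^{1-\beta}$, so $r \le (n^{-\beta}/\alpha_d)^{1/d}$ up to constants, and $r\log^2 n \to 0$. (For the corollary as stated, the relevant regime is implicitly the one with $r$ polynomially small.) On the good event $\mathcal{E}_1$ from \cref{lem:giant-is-unique}, every component other than $\mathring{G} := \mathring{G}_M^n(r)$ has $d_M$-diameter less than $r\phi$.

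Next, bound the graph diameter of the small components. A component of $d_M$-diameter less than $r\phi$ lies inside some ball $B_M(x, r\phi)$, so its graph diameter is at most its number of vertices. I would therefore cover $M$ by $O(r^{-d})$ balls of radius $2r\phi$ centered at a $r\phi$-net. Each such ball has Poisson vertex count with mean $O(n (r\phi)^d) = O(\lambda \phi^d)$. A Poisson tail bound plus a union bound then shows that with probability $1 - \exp(-c\log^2 n)$ every ball has at most $K\lambda\phi^d$ points, say. So every small component has graph diameter at most $K\lambda \log^{2d} n$.

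Then lower bound the graph diameter of $\mathring{G}$. Its $d_M$-diameter is at least a constant, since it contains the large components $\mathring{G}_i$ of the patches in the proof of \cref{lem:giant-is-unique}, each of which has diameter comparable to $\delta_{z_i}$. This again holds with high probability by \cref{lem:non-uniform-uniqueness} at scale $\phi$ equal to a constant over $r$. Since each edge has $d_M$-length at most $r$, the graph diameter of $\mathring{G}$ is at least $c'/r$. It remains to compare $c'/r$ with $K\lambda\log^{2d}n$, i.e.\ to show $1/r \gg \lambda \log^{2d} n$. Writing $1/r = (n/\lambda)^{1/d}$, this requires $\lambda^{1+1/d}\log^{2d}n \ll n^{1/d}$.

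The main obstacle is that this comparison fails for large $\lambda$, so the crude "graph diameter $\le$ vertex count" bound is only adequate for $\lambda$ up to roughly $n^{1/(d+1)-o(1)}$. To handle general $\lambda$, I would replace the vertex-count bound with a sharper one. The graph distance within a small component $\kappa'$ should be at most $O(\phi)$ times a factor. The natural tool is the upper bound in \cref{large-deviations-time-constant}, applied after mapping to $\R^d$ via \cref{lem:almostflat}: it gives chemical distance at most $e^\xi\mu\, d_M/r + O(\log^2 n)$ between connected points, and by a union bound over pairs of Poisson points (multivariate Mecke) this holds simultaneously for all pairs. Small components then have graph diameter $O(\log^2 n)$, which is far below $c'/r$ whenever $r \to 0$ polynomially. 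If the constant in \cref{large-deviations-time-constant} is unusable uniformly in $\lambda$, I would instead use monotonicity/coupling to intensity $\lambda_0$. A short-range alternative is to observe that a connected set of $d_M$-diameter less than $r\phi$ admits paths whose points form an $r$-chain with pairwise hops, and bound hop counts via a packing argument: a shortest path cannot have vertices $i$ and $i+2$ within distance $r$, so alternate path vertices are $r/2$-separated, and a ball of radius $r\phi$ holds only $O(\phi^d)$ such points. This last argument is cleanest and gives graph diameter $O(\log^{2d}n) \ll 1/r$ deterministically. I would use it as the primary route.
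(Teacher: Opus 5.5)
Your primary route is the paper's argument for small $r$, and it is correct there. Apply \cref{lem:giant-is-unique} once at scale $r\phi = c$ to get a component of $d_M$-diameter $\Omega(1)$, hence graph diameter $\Omega(1/r)$. Apply it again at $\phi=\log^2 n$, so every other component has $d_M$-diameter below $r\log^2 n$; you correctly absorb $r^{-d}=n/\lambda\le n/\lambda_0$ into $\exp(-c\log^2 n)$. Then use the deterministic packing bound: on a graph geodesic, vertices with index gap at least $2$ are more than $r$ apart. This caps the graph diameter of the small components at $O(\log^{2d} n)$, which is the paper's ``$(\log n)^{O_d(1)}$''. The vertex-count and shape-theorem detours are unnecessary.

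The gap is the regime you set aside. You say the corollary ``implicitly'' concerns polynomially small $r$, but it is stated with no restriction on $r$. It is also invoked in \cref{prop:diff-dimensions} and \cref{thm:algorithmic} for all $\Delta\in[\Delta_0,\alpha n]$, where $r$ can be of constant order. There your argument genuinely breaks. The hypothesis $r\log^2 n\le c$ of \cref{lem:giant-is-unique} fails, and the comparison $(\log n)^{2d}\ll 1/r$ fails too.

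The missing ingredient is \cref{lem:connected}, and the paper splits the proof this way (at $r=n^{-1/d+\beta}$, which is the same split). Fix $\beta\in(0,1/d)$.
\begin{itemize}
\item If $r\ge n^{-\beta}$, then $G_M^n(r)$ is connected with probability at least $1-\exp(-c\,n^{1-d\beta})$. There is then only one component, and the claim is trivial.
\item If $r<n^{-\beta}$, then $r$ is polynomially small and your argument applies verbatim, with constants depending only on $\lambda_0$ and $M$ once $\beta$ is fixed.
\end{itemize}
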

    \begin{proof}

    First, if $r \geq n^{-1/d +\beta}$ for some $\beta > 0$ then \cref{lem:connected} shows the graph is connected.  Otherwise, applying \cref{lem:giant-is-unique} twice shows that there is one component of $d_M$ diameter at least $c$, 
    and all other components have $d_M$-diameter larger at most $r \log^2 n$. Each component of $d_M$-diameter at most $r \log^2 n$ has graph diameter at most $(\log n)^{O_d(1)}$, while
    the giant component has graph diameter $\Omega(1/r)$.
    \end{proof}

\section{Squeezing between homogeneous processes in Euclidean space} \label{sec:setting-up-processes}

For each point $x \in M$, we will use \cref{lem:almostflat} to identify a small $d_M$-ball that can be identified with a subset of $\R^d$ up to small measure and metric distortion.  This will show that when we pushforward the measure $\lambda \cdot dM$ into $\R^d$ on this patch,  we may trap this measure between $\lowerlambda d\mathcal{L}$ and $\upperlambda d\mathcal{L}$ for $\lowerlambda$ and $\upperlambda$ close to $\lambda$.  For technical reasons, we will want that among all  $\lambda \in [\lambda_0,\infty)$ and $x \in M$ we have only a finite collection of choices for $\lowerlambda$ and $\upperlambda$. We begin by specifying these points and then define homogeneous random graphs in $\R^d$ whose vertex sets are Poisson with these intensities.

\subsection{Picking upper and lower intensities}

Throughout this subsection we will fix $\eps > 0$ and $\lambda_0 > \lambda_c$.  
Note that by \cref{lem:limit-time-constant}, there is some $\lambda_+$ so that 
\begin{equation}\label{eq:lam-+}
    \mu(\lambda_+) \leq e^{\eps/10}\,.
\end{equation}

For $\eta > 0$ to be chosen later as a function of $\eps,\lambda_0$, define $\lambda_k = \lambda_0 e^{(d + 1)\eta k}$ and define $K = \min\{k : \lambda_k \geq \lambda_+\}$.  By taking $\eta$ small enough, we may assume that $\lambda_{-1} > \lambda_c$.  Define the (multiplicative) net $\mathcal{N} = \{\lambda_k : k \in \{-1,0,\ldots,K+2\}\}$.  Now, for $\lambda \in [\lambda_0,\lambda_+ e^{(d+1)\eta}]$ define $$\lowerlambda =\lowerlambda(\lambda) = \max\{\lambda_k \in \mathcal{N} : \lambda_k \leq e^{-(d+1)\eta}\lambda\} \quad \text{ and } \quad  \upperlambda =\upperlambda(\lambda) = \min\{\lambda_k \in \mathcal{N} : \lambda_k \geq e^{(d+1)\eta}\lambda\}\,. $$
We will require two main features from these choices of $\lowerlambda$ and $\upperlambda$; first, we have \begin{equation}\label{eq:upper-lowerlam-bounds}
    \lowerlambda e^{{(d+1)\eta}} \leq \lambda \leq \upperlambda e^{(d+1)\eta}
\end{equation}
by construction.  We also have $\upperlambda e^{-2(d+1)\eta} \leq \lambda \leq \lowerlambda e^{2(d+1)\eta}$.  Thus, we may use the uniform continuity of $\lambda \mapsto \mu(\lambda)$ on the compact interval $[\lambda_{-1},\lambda_{K+2}] \subset (\lambda_c,\infty)$ guaranteed by \cref{Continuity of Time Constant} so that if we take $\eta$ small enough we have \begin{equation}\label{eq:upper-lower-TC-bounds}
    \frac{\mu(\lowerlambda)}{\mu(\lambda)} \leq e^{\eps/2} \,, \quad \frac{\mu(\lambda)}{\mu(\upperlambda)} \leq e^{\eps/2}\,,\quad \text{ and } \quad  \eta \leq \frac{\eps}{10}\,.
\end{equation}

For $\lambda \geq \lambda_+ e^{(d+1)\eta}$ we will simply define $\lowerlambda = \lowerlambda(\lambda) = \lambda_+ e^{-(d+1)\eta}$.  We will not make use of $\upperlambda$ for $\lambda \geq \lambda_+ e^{(d+1)\eta}$ and so we simply define $\upperlambda = \upperlambda(\lambda) = +\infty$ for $\lambda \geq \lambda_+$ so that way \eqref{eq:upper-lowerlam-bounds} holds  for all $\lambda \geq \lambda_0$.  Further, using \eqref{eq:lam-+} we have that \eqref{eq:upper-lower-TC-bounds} holds for all $\lambda \geq \lambda_0$ as well (where we interpret $\mu(+\infty) = 1$ using \cref{lem:limit-time-constant}).

\subsection{Specifying our homogeneous geometric random graphs in \texorpdfstring{$\R^d$}{Rd}}

Now that we have fixed our choices of $\lowerlambda$ and $\upperlambda$, we begin to define the processes that we trap $G_M^n$ between on a small patch.  For  $\eta > 0$ as chosen as a function of $\eps,\lambda_0$ above and for $x \in M$, let $u, \delta > 0$ be as guaranteed by \cref{lem:almostflat}.  We define $S := u(B_M(x,\delta))$ and 
\begin{equation} \label{eq:G-S-hat-def}
    G_S := u(G^{n}_M(r) \cap B_M(x,\delta)), \qquad 
    \Ghat_S := (e^{-\eta} r)^{-1} G_S\,.
\end{equation}
  
We want to identify $\Ghat_S$ as the restriction
of a geometric random graph on $\R^d$
to which \cref{lem:non-uniform-uniqueness}
will apply.   We first note how rescaling Euclidean space changes the intensity measure:

\begin{observation}\label{obs:rescale}
    For each $\lambda,r,t > 0$ and dimension $d$ we have $$t G_{\R^d}^\lambda(r) \equiv G_{\R^d}^{\lambda/t^d}(tr)\,.$$
\end{observation}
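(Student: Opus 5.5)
The plan is to verify the identity $tG_{\R^d}^\lambda(r) \equiv G_{\R^d}^{\lambda/t^d}(tr)$ directly from the definitions, treating the graph as a deterministic function of its vertex set. There are two things to check: the vertex set of the pushforward has the right law, and the edge rule transforms correctly.

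First, the vertex set. Let $X$ be a Poisson process on $\R^d$ with intensity $\lambda\,d\mathcal{L}$, and let $T_t(x) = tx$. By the mapping theorem for Poisson processes, $T_t(X)$ is a Poisson process whose intensity measure is the pushforward $(T_t)_\ast(\lambda\mathcal{L})$. For any Borel set $A$ we have
\[
(T_t)_\ast(\lambda \mathcal{L})(A) = \lambda\,\mathcal{L}(t^{-1}A) = \lambda t^{-d}\mathcal{L}(A),
\]
so $T_t(X)$ is a homogeneous Poisson process of intensity $\lambda/t^d$. One could instead avoid the mapping theorem and check the defining properties directly: the counts $|T_t(X)\cap A| = |X\cap t^{-1}A|$ are Poisson with mean $\lambda t^{-d}\mathcal{L}(A)$, and they are independent over disjoint sets because the sets $t^{-1}A$ are then also disjoint.

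Second, the edges. In $G_{\R^d}^\lambda(r)$, two points $x,y\in X$ are adjacent iff $\|x-y\|\le r$. Since $\|tx-ty\| = t\|x-y\|$, this holds iff $\|tx-ty\|\le tr$. So the image under $T_t$ of the edge set of $G_{\R^d}^\lambda(r)$ is exactly the edge set of the geometric graph on $T_t(X)$ with connection radius $tr$.

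Combining the two steps, $tG_{\R^d}^\lambda(r)$ is the geometric graph with radius $tr$ on a Poisson process of intensity $\lambda/t^d$. That is precisely the law of $G_{\R^d}^{\lambda/t^d}(tr)$. Neither step is a real obstacle; the only point needing care is the Jacobian factor $t^{-d}$ in the pushforward measure. As a sanity check, taking $t = 1/r$ recovers the scaling remark made earlier in the paper, namely $\frac1r G_{\R^d}^\lambda(r)\equiv G_{\R^d}^{\lambda r^d}(1)$.
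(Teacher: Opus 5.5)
Your proof is correct and follows the paper's own argument, which simply notes that scaling an intensity-$\lambda$ Poisson process by $t$ gives an intensity-$\lambda t^{-d}$ process; you also write out the edge-rule check $\|tx-ty\|\le tr \iff \|x-y\|\le r$, which the paper leaves implicit. Your sanity check $\frac1r G_{\R^d}^\lambda(r)\equiv G_{\R^d}^{\lambda r^d}(1)$ is the correct formula and matches the paper's convention $\lambda = nr^d$, although the earlier remark in the paper actually prints $\lambda/r^d$ there, which is a typo in the paper.
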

\begin{proof}
    This follows from seeing that if one takes an intensity $\lambda$ Poisson process on $\R^d$ and scales by $t$ then one obtains a $\lambda t^{-d}$ Poisson process in distribution.
\end{proof}

We now identify $\Ghat_S$ in terms of a metric and measure of bounded distortion.

\begin{lem}\label{lem:identify-G-S}
    There is a metric $\sigma$ on $\R^d$ and measure $\nu$ satisfying $$ \lambda e^{-(d+1)\eta}  \leq \frac{d\nu}{d\mathcal{L}} \leq \lambda e^{(d+1)\eta}\,,\qquad  1 \leq \frac{\|p - q\|}{\sigma(p,q)} \leq e^{2\eta} \quad \text{ for all }p,q \in (e^{-\eta} r)^{-1}S$$ so that if we set $G^\nu$ to be the geometric random graph on $\R^d$ whose vertex set is the Poisson point process of intensity measure $\nu$ and whose edges are between pairs $p,q$ with $\sigma(p,q) \leq 1$ then $$\widehat{G}_S \equiv G^\nu \cap (e^{-\eta} r)^{-1} S\,.$$
\end{lem}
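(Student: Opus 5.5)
The plan is to build $\nu$ and $\sigma$ directly from the chart $u$ of \cref{lem:almostflat}, composed with the dilation $\Phi(p) = (e^{-\eta}r)^{-1}p$. Write $S' := \Phi(S) = (e^{-\eta} r)^{-1}S$ and $\psi := \Phi\circ u : B_M(x,\delta)\to S'$, a diffeomorphism.

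\textbf{Metric.} On $S'$ set $\sigma(p,q) := e^{-\eta} r^{-1} d_M(\psi^{-1}p,\psi^{-1}q)$. The scaling is chosen so that $\sigma(p,q)\le 1$ if and only if $d_M(\psi^{-1}p,\psi^{-1}q) \le e^{\eta} r$. By \cref{lem:almostflat}, $\|p-q\| = (e^{-\eta}r)^{-1}\|u(\psi^{-1}p)-u(\psi^{-1}q)\|$ lies between $e^{-\eta}$ and $e^{\eta}$ times $(e^{-\eta}r)^{-1}d_M(\psi^{-1}p,\psi^{-1}q) = e^{\eta}\sigma(p,q)$. Hence $\sigma(p,q) \le \|p-q\| \le e^{2\eta}\sigma(p,q)$, which is exactly the required ratio bound.

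\textbf{Extension and edge rule (main obstacle).} We need $\sigma$ to be a metric on all of $\R^d$ with the distortion bound on $S'$, and we need the graph it induces to match $\Ghat_S$. For the extension I would first try the infimal-convolution formula
\[
\sigma(p,q)=\min\Bigl\{e^{-2\eta}\|p-q\|,\ \inf_{a,b\in S'}\bigl(e^{-2\eta}\|p-a\|+\sigma(a,b)+e^{-2\eta}\|b-q\|\bigr)\Bigr\}.
\]
On $S'$ we have $\sigma(a,b)\ge e^{-2\eta}\|a-b\|$, so by the triangle inequality this expression restricts to the old $\sigma$ on $S'$. It is a metric and satisfies $e^{-2\eta}\|p-q\| \le \sigma(p,q)\le\|p-q\|$. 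The only delicate point is whether the bound $\sigma \le \|\cdot\|$ on $S'$ is compatible with this formula; since both comparisons were already verified on $S'$, it is.

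The matching of edges is the step I expect to need the most care. $\Ghat_S$ is defined by the rule $d_M \le r$, whereas the scaled metric above encodes $d_M\le e^{\eta}r$. I would check which convention is intended by the scale $e^{-\eta}r$. If the threshold must be exactly $d_M\le r$, I would instead set $\sigma := r^{-1}d_M\circ\psi^{-1}$, for which $\|p-q\|/\sigma(p,q)\in[1,e^{2\eta}]$ also follows from \cref{lem:almostflat}; this reproduces the edge rule of $G_M^n(r)$ exactly.

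\textbf{Measure.} On $S'$ let $\nu := \psi_\ast(n\, dM)$, and off $S'$ let $\nu := \lambda\mathcal{L}$; the statement only involves the restriction to $S'$. Since $\psi_\ast dM = \Phi_\ast(u_\ast dM)$, \cref{lem:almostflat} gives $u_\ast dM/d\mathcal{L}\in[e^{-\eta},e^\eta]$. The pushforward under the dilation $\Phi$ multiplies densities by $(e^{-\eta}r)^{d}$, so
\[
\frac{d\nu}{d\mathcal{L}} \in n r^d e^{-d\eta}\,[e^{-\eta},e^{\eta}] = \lambda\,[e^{-(d+1)\eta},e^{-(d-1)\eta}] \subset \lambda\,[e^{-(d+1)\eta},e^{(d+1)\eta}],
\]
using $\lambda = nr^d$.

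\textbf{Identification in law.} The mapping theorem for Poisson processes shows that $\psi$ carries the restriction to $B_M(x,\delta)$ of the intensity-$n\,dM$ process to a Poisson process on $S'$ with intensity $\nu|_{S'}$. This is precisely the restriction to $S'$ of the vertex set of $G^\nu$. Edges agree by the construction of $\sigma$, so $\Ghat_S \equiv G^\nu\cap S'$.
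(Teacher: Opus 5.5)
Your overall plan is the paper's, and several parts match it exactly. The fallback metric $\sigma=r^{-1}d_M\circ\psi^{-1}$ on $S'$ is the paper's metric. The measure $\nu=\psi_\ast(n\,dM)$, with density in $\lambda[e^{-(d+1)\eta},e^{-(d-1)\eta}]$, is the paper's $\nu$. The mapping-theorem identification is also the paper's argument. However, two steps are wrong as written, one of them seriously.

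\textbf{The edge threshold is forced.} It is not a matter of which convention is intended. $\Ghat_S$ is a dilate of $G_S=u(G_M^n(r)\cap B_M(x,\delta))$, and a dilation does not change the edge set. So edges must be $d_M\le r$, and $\sigma=r^{-1}d_M\circ\psi^{-1}$ is the only correct choice. Your first candidate $\sigma=e^{-\eta}r^{-1}d_M\circ\psi^{-1}$ fails in two ways. Its threshold is $d_M\le e^{\eta}r$. It also rests on an arithmetic slip: $(e^{-\eta}r)^{-1}d_M=e^{\eta}r^{-1}d_M=e^{2\eta}\sigma$, not $e^{\eta}\sigma$. That candidate therefore gives $\|p-q\|/\sigma\in[e^{\eta},e^{3\eta}]$, which violates the required upper bound $e^{2\eta}$.

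\textbf{The extension formula does not extend $\sigma$ (the genuine gap).} You checked the infimum term but not the first term of the minimum. On $S'$ the comparison reads $e^{-2\eta}\|p-q\|\le\sigma(p,q)$, so the term $e^{-2\eta}\|p-q\|$ undercuts $\sigma$. Even the infimum alone collapses: taking $a=b=p$ gives $e^{-2\eta}\|p-q\|$. Combined with your own lower bound, your formula is identically $e^{-2\eta}\|p-q\|$ on all of $\R^d$. It forgets $d_M$ completely, and $G^\nu\cap S'$ becomes the Euclidean graph with threshold $\|p-q\|\le e^{2\eta}$, not $\Ghat_S$.

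The fix is to glue using the comparison metric that \emph{dominates} $\sigma$ on $S'$, i.e.\ replace $e^{-2\eta}\|\cdot\|$ by $\|\cdot\|$:
\[
\tilde\sigma(p,q)=\min\Bigl\{\|p-q\|,\ \inf_{a,b\in S'}\bigl(\|p-a\|+\sigma(a,b)+\|b-q\|\bigr)\Bigr\}.
\]
\emph{It agrees with $\sigma$ on $S'$.} For $p,q\in S'$, the bound $\sigma\le\|\cdot\|$ gives $\|p-q\|\ge\sigma(p,q)$. Together with the triangle inequality for $\sigma$ it also gives $\|p-a\|+\sigma(a,b)+\|b-q\|\ge\sigma(p,a)+\sigma(a,b)+\sigma(b,q)\ge\sigma(p,q)$, with equality at $a=p$, $b=q$. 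Hence $\tilde\sigma=\sigma$ on $S'$.

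\emph{It is a metric.} $\tilde\sigma$ equals the infimum over chains that use Euclidean steps anywhere and $\sigma$-jumps between points of $S'$. To see this, note that any excursion outside $S'$ between two points of $S'$ costs at least their $\sigma$-distance, so every chain reduces to one of the two forms in the formula. Chain infima satisfy the triangle inequality by concatenation.

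\emph{It has the global comparison.} One has $e^{-2\eta}\|\cdot\|\le\tilde\sigma\le\|\cdot\|$ on all of $\R^d$. This global bound is what the later uniqueness and Antal--Pisztora lemmas require, since $e^{2\eta}\le 2$.

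This is, up to notation, the construction in the paper's footnote.
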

\begin{proof}
    Define a metric $\sigma$ on $(e^{-\eta} r)^{-1}S 
    \subset \R^d$
    by taking
    \[
        \sigma(p,q) := \frac{1}{r}d_M(u^{-1}(e^{-\eta} rp), u^{-1}(e^{-\eta} rq)).
    \]
    Note that for all $p,q \in (e^{-\eta} r)^{-1} S$ we have
    \[
        \sigma(p,q) \le \|p - q\| \le e^{2\eta} \sigma(p,q),
    \]
     by \cref{lem:almostflat}. 
    We take an arbitrary extension of $\sigma$
    to all of $\R^d$ which still satisfies the 
    above inequalities.\footnote{Defining $\sigma$
    on all of $\R^d$ is not strictly necessary;
    one can state a version of \cref{non-uniform-Antal-Pisztora} where the underlying process
    is only defined on a subset of $\R^d$,
    but it is more complicated to state.
    One way to define such an extension is by
    \[
        \sigma(p,q) := \inf_{s \in S} \|p - s\| + \sigma(s,q)
    \]
    for all $p \in S, q \notin S$, and
    \[
        \sigma(p,q) := \min(\|p - q\|, \inf_{s_1,s_2 \in S}
        \|p - s_1\| + \sigma(s_1,s_2) + \|s_2 - q\|).
    \]}
        
    Next, define a measure $\nu$ on
    $\R^d$ by taking 
        \[
            \nu(A) :=
            \frac{\lam}{r^d} dM( u^{-1}((e^{-\eta}r) \cdot A))
        \]
        for $A \subset (e^{-\eta}r)^{-1}S$
        and setting
            $\nu(A) := \lam \mathcal{L}(A)$
        for $A \subset (e^{-\eta}r)^{-1}S^c$.\footnote{Again, the value of the measure outside of $(e^{-\eta}r)^{-1}S$
        will not play any role.}
        Then by \cref{lem:almostflat} we have \begin{equation*}
            \lambda e^{-(d+1)\eta}\mathcal{L}(A) \leq \nu(A) \leq \lambda e^{(d+1)\eta}\mathcal{L}(A) \,.
        \end{equation*}
        Define $G^\nu$ to be the random geometric graph on $\R^d$ whose vertex
        set is the Poisson point process
        with intensity measure $\nu$,
        and whose edges are between
        pairs $p,q$ of points with $\sigma(p,q) \le 1$.
        Then we see that
        \[
            \Ghat_S \equiv G^\nu \cap (e^{-\eta}r)^{-1}S,
        \]
        by \cref{obs:rescale}, 
        as desired.
\end{proof}

Recalling our choice of $\lowerlambda$ and $\upperlambda$, define 

\begin{align}\label{eq:Ghat-defs}
    \Ghat^+ = G_{\R^d}^{\upperlambda}(1) \cap ((e^{\eta} r)^{-1} \cdot S) \quad &\text{ and } \quad \Ghat^- = G_{\R^d}^{\lowerlambda}(1) \cap ((e^{-\eta} r)^{-1} \cdot S) \\
    G^+(r) = e^\eta r \Ghat^+ \quad &\text{ and } \quad G^-(r) = e^{-\eta} r \Ghat^-\,. \nonumber 
\end{align}

By \cref{lem:identify-G-S} we may couple the underlying Poisson processes so that we have the inclusions \begin{equation}\label{eq:ghat-inclusions}
    G^-(r) \subset G_S \subset G^+(r)\,.
\end{equation}

\section{Moving from one giant component to another} \label{sec:move-giant-component}

A crucial step towards \cref{thm:main} is to show that the largest component of $G_M^n(r)$ contains the largest component of the graph $G^-$ defined in \cref{sec:setting-up-processes} for each point $x \in M$.  This section shows how we can compare giant components.  We also need to show that each point in $M$ is close to the giant component.   To begin with, recall that $\mathring{G}_M^n(r)$ is the component of $G_M^n(r)$ of largest $d_M$ diameter and that for $y \in M$ we let $\mathring{y}$ denote the $d_M$-closest point in $\mathring{G}_M^n(r)$ to $y$.  The first main goal of this section is the following lemma:

\begin{lem}\label{lem:all-near-giant}
    Fix $\beta >0$ and $\lambda_0 > \lambda_c$.      There are constants $c = c(\lambda_0,d,\beta,M) > 0$ and $n_0 = n_0(\lambda_0,d,\beta,M)$ so that if $r \leq n^{-\beta}$ and $\lambda \geq \lambda_0$ then the event \begin{equation*} \mathcal{B} = \left\{\text{for all } y \in M, d_M(y,\mathring{y}) \leq r \log^2 n \right\}
    \end{equation*}
    satisfies $\P(\mathcal{B}^c) \leq \exp(-c \log^2 n)$ for all $n \geq n_0$.
\end{lem}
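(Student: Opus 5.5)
We will prove the lemma by a local-to-global argument. First, use \cref{lem:giant-is-unique} with $\phi = \log^2 n$ to reduce to events inside the patches $V_{z_j}$. Then, in each patch, show that the giant component comes within distance $r\log^2 n$ of every point of a fine net.

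\emph{Setup.} Fix $\eta$ small enough that $\lambda_0 e^{-(d+1)\eta} > \lambda_c$, and take the finite cover $\{V_{z_j}\}_{j\in[k]}$ from the proof of \cref{lem:giant-is-unique}, with Lebesgue number $\alpha$. By \cref{lem:giant-is-unique} with $\phi=\log^2 n/4$, except on an event of probability at most $Cr^{-d}e^{-c\log^2 n}\le e^{-c'\log^2 n}$ (using $r^{-d}\le n^{d}$, which is where we need $r\le n^{-\beta}$ only polynomially), the following holds: exactly one component of $G_M^n(r)$ has $d_M$-diameter at least $r\log^2 n/4$, and it is $\mathring{G}_M^n(r)$. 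It therefore suffices to show that with high probability every ball $B_M(y, r\log^2 n/2)$ meets some component of $d_M$-diameter at least $r\log^2 n/4$.

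\emph{Reduction to a net.} Let $\{y_i\}$ be an $r$-net of $M$ of size $O(r^{-d})\le O(n^{d})$. For a fixed $y_i$ with $B_M(y_i,\delta)\subset V_{z_j}$, use \cref{lem:identify-G-S} and the coupling \eqref{eq:ghat-inclusions} to dominate $G_M^n(r)$ near $y_i$ from below by $G^-(r)$, a rescaled homogeneous supercritical graph $G^{\lowerlambda}_{\R^d}(1)$ with $\lowerlambda>\lambda_c$. The key estimate is then
\[
\P\bigl(\text{no vertex of } B(0,\tfrac14\log^2 n) \text{ lies in a component of diameter} \ge \tfrac12 \log^2 n \text{ in } G^{\lowerlambda}_{\R^d}(1)\cap[-\log^2 n,\log^2 n]^d\bigr)\le e^{-c\log^2 n}.
\]
This is a standard ``crossing/large cluster'' estimate for supercritical continuum percolation. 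One could try to get it from \cref{lem:non-uniform-uniqueness} applied in the box $[-s,s]^d$ with $s=\tfrac14\log^2 n$. However, uniqueness alone does not place the large cluster near the origin. So instead we argue by renormalization. Tile $B(0,\tfrac14\log^2 n)$ into boxes of side $L$ for a large constant $L$. The box lemma in \cref{lem:non-uniform-uniqueness} (or rather its proof in \cite[Ch.~10]{Pen03}) shows that each box is ``good'' with probability tending to $1$ as $L\to\infty$, where good means it has a crossing cluster that is unique and connects to the crossing clusters of its neighbours. Good boxes dominate a highly supercritical site percolation. By a Peierls argument, a good path of boxes from the centre to distance $\tfrac12\log^2 n$ exists except with probability $e^{-c\log^2 n}$. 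This path yields the required large cluster. Since a path of $G^-(r)$ is a path of $G_S\subset G_M^n(r)$, we get a component of $G_M^n(r)$ of $d_M$-diameter at least $r\log^2 n/4$ within $d_M$-distance $e^{\eta}r\log^2 n/4$ of $y_i$.

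\emph{Conclusion and main obstacle.} A union bound over the $O(n^d)$ net points gives total failure probability $n^{d}e^{-c\log^2 n}\le e^{-c''\log^2 n}$ for $n\ge n_0$. For general $y$, pick the nearest $y_i$; the distance is at most $r+e^\eta r\log^2 n/4\le r\log^2 n$. The main obstacle is the local estimate above. It requires a renormalized (Antal–Pisztora-type) statement for the distorted metric $\sigma$ and measure $\nu$. This is exactly the kind of adaptation of Penrose's Chapter 10 indicated for \cref{lem:non-uniform-uniqueness}. I would first try to deduce it by applying \cref{lem:non-uniform-uniqueness} to the small box around $y_i$ together with a lower bound on the probability that a box of side $\log^2 n$ contains a crossing cluster. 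Otherwise I would carry out the block argument directly.
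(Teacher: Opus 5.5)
Your argument is correct and has the same skeleton as the paper's proof:
\begin{itemize}
\item an $r$-net with a union bound;
\item the lower coupling $G^-(r)\subset G_S$ with a homogeneous supercritical process on a finite atlas;
\item \cref{lem:giant-is-unique} to identify a large local cluster with $\mathring{G}_M^n(r)$.
\end{itemize}
The difference is the local input.

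The paper uses its event $\cC_1$ (\cref{P(C)-large}). That event comes from the Yao--Chen--Guo estimate \cref{Yao o close in infinite comp}, together with a local uniqueness event $U_r^-$ that places the pieces of $C_\infty$ inside the patch component $\kappa$; $\kappa$ is then glued to the giant. You bypass $\kappa$ and $U_r^-$ by applying \cref{lem:giant-is-unique} directly at threshold $r\log^2 n/4$, which is a slightly cleaner chain of implications.

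You then rebuild the local estimate by renormalization, but it follows in one line from \cref{Yao o close in infinite comp}. A vertex of $C_\infty$ in $B(0,\tfrac14\log^2 n)$ has an infinite path that must leave $[-\log^2 n,\log^2 n]^d$. So its cluster in the restricted graph has diameter at least $\tfrac34\log^2 n-1$, and your failure probability is at most $\exp(-c(\log^2 n)^{d-1})$. For the same reason, your ``main obstacle'' is not one. After coupling with $G^-(r)$, everything is the homogeneous process $G^{\lowerlambda}_{\R^d}(1)$, so no distorted Antal--Pisztora statement is needed.

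If you keep the block argument, one claim needs correcting. A good path of boxes starting from the centre box cannot exist with probability $1-e^{-c\log^2 n}$, because that single box is bad with probability bounded below. What Peierls gives is weaker: some box in the central ball is joined by good boxes to distance about $\tfrac34\log^2 n$. Otherwise a $*$-connected set of at least order $(\log^2 n/L)^{d-1}$ bad boxes separates the ball from the boundary, which has probability $\exp(-c(\log^2 n)^{d-1})$ once the good-box density is close to $1$. That weaker statement is all your argument actually uses.

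Finally, the net-size bound $r^{-d}\le n/\lambda_0$ comes from $\lambda=nr^d\ge\lambda_0$, not from $r\le n^{-\beta}$. The upper bound on $r$ is what makes $r\log^2 n$ small enough for \cref{lem:giant-is-unique} and keeps the $\log^2 n$-box inside a chart.
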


We also will need a more technical lemma, which will say that if we have a point $x \in M$ and let $G^-$ be as defined in \cref{sec:setting-up-processes} then there is some component $\kappa$ of $G^-$ so all points in the global giant component $\mathring{G}_M^n(r)$ near $x$ are close to $\kappa$ in the graph metric.  Write $d_r^\lambda$ for the graph distance on $G_M^n(r).$

\begin{lem} \label{sparse-giant-nearby}
    Fix $\beta,\eps >0, \lambda_0 > \lambda_c$ and $x \in M$.      There are constants $c = c(\lambda_0,d,\eps,\beta,M) > 0$ and $n_0 = n_0(\lambda_0,d,\eps,\beta,M), L' = L'(\lambda_0,d,\eps)$ so that if $r \leq n^{-\beta}$ and $\lambda \geq \lambda_0$ then the following holds. 
    The event
    \[
    \cC:=
    \left\{
        \begin{array}{c}
        \text{there exists a connected component }
        \kappa \text{ of } G^-(r) \\
        \text{such that }
        \forall p \in \mathring{G}^{n}_M(r) \cap B_M(x,\delta/L'),
        \exists p' \in \kappa  \\ 
        \mbox{ such that }
        d^\lam_r(p,u^{-1}(p')) \le \log^2 n
        \end{array}
    \right\}
    \]
    satisfies
    $\Prob(\cC^c) \leq \exp(- c \log ^2n)$ for all $n \geq n_0$.
\end{lem}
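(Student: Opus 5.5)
Take $\kappa$ to be the component of $G^-(r)$ of largest Euclidean diameter, i.e.\ the image of the largest component of $\Ghat^-$. The argument combines three facts: uniqueness of large clusters in $\Ghat^-$, the inclusion $G^-(r)\subset G_S$, and a local density statement for $\kappa$. The proof then goes in four steps.

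\textbf{Step 1: $\kappa$ is locally dense and lies in the global giant.} Apply \cref{lem:non-uniform-uniqueness} to the homogeneous process $\Ghat^-$ of intensity $\lowerlambda>\lambda_c$, and to its restrictions to every box of side $\log^2 n$ (in rescaled units) inside $(e^{-\eta}r)^{-1}\cdot S$. There are polynomially many such boxes, since $r\le n^{-\beta}$, so a union bound with $\phi_s\asymp \log^2 n$ costs $\exp(-c\log^2 n)$. On this event, each box contains exactly one cluster of diameter at least $\tfrac14\log^2 n$, and neighbouring boxes' big clusters coincide, as in the proof of \cref{lem:giant-is-unique}. Standard supercritical estimates (an Antal--Pisztora type statement, or directly the uniqueness lemma on smaller sub-boxes) additionally give that every such box meets $\kappa$. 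Consequently $\kappa$ has Euclidean diameter of order $\delta/r$. Via \cref{eq:ghat-inclusions}, $u^{-1}(\kappa)$ is a connected subgraph of $G_M^n(r)$ with $d_M$-diameter of order $\delta$. By \cref{lem:giant-is-unique}, it is contained in $\mathring{G}^n_M(r)$.

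\textbf{Step 2: a point of the giant has a long arm near $p$.} Fix $p\in \mathring G^n_M(r)\cap B_M(x,\delta/L')$, with $L'$ large so that $B_M(p,2\delta/L')$ stays well inside $B_M(x,\delta)$. Since the giant has diameter of order $1$, the component of $p$ in $G_M^n(r)\cap B_M(p,r\log^2 n)$ must reach the boundary sphere. This yields a path from $p$ of $d_M$-length $r\log^2 n$ staying in this small ball.

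\textbf{Step 3: this arm must meet $\kappa$.} This is the main obstacle, because the arm is a path in $G_S$, not in $G^-$, so it need not consist of $G^-$-vertices. Two ways to handle it:
\begin{itemize}
\item \emph{Sprinkling.} Write the full process on $S$ as $G^-$ plus an independent Poisson process of intensity at least $(e^{(d+1)\eta}-1)\lowerlambda$ times Lebesgue (this is exactly the gap built into \eqref{eq:upper-lowerlam-bounds}). Along the arm there are about $\log^2 n$ disjoint unit-scale locations where the arm passes within distance $O(1)$ of $\kappa$ (by Step 1's density); at each, an added point near both links them with probability bounded below. This gives failure probability $\exp(-c\log^2 n)$. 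The difficulty is that the arm itself depends on the sprinkled points, so one should reveal the sprinkled process and condition suitably.
\item \emph{Crossing argument} (cleaner, and the one to try first). Apply \cref{lem:non-uniform-uniqueness} directly to the \emph{non-homogeneous} graph $G^\nu$ from \cref{lem:identify-G-S} on the box of side $\log^2 n$ around $p$. Both the $G_S$-component of the arm and the component of $G_S$ containing $u^{-1}(\kappa)\cap$ box have diameter at least $\tfrac14\log^2 n$ in that box. Hence they coincide inside the box, giving a $G_S$-path within the box from $p$ to a vertex $u^{-1}(p')$ with $p'\in\kappa$.
\end{itemize}

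\textbf{Step 4: convert to a graph-distance bound.} Inside a box of side $\log^2 n$ there are at most $O(\lambda \log^{2d} n)$ vertices, but the bound needed is $\log^2 n$ hops. So shrink the box to side $\log^{1/2}n$, taking $\phi\asymp\log^{1/2}n$ only where the union bound tolerates it. Alternatively, use boxes of side $\log n$ with the $\exp(-c\log^2 n)$ bound coming from requiring many independent attempts. Either way, the path found has length at most $\log^2 n$ for large $n$, completing the proof after a union bound over a polynomial net of points $p$.
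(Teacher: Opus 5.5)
\textbf{The gap is in Step 4.} Step 4 is the only part of the statement that is not about connectivity. Your Steps 1--3 (crossing version) at best show that $u(p)$ is joined to some $p'\in\kappa$ by a $G_S$-path inside a box of rescaled side $s\asymp\log^2 n$. Uniqueness and crossing lemmas such as \cref{lem:non-uniform-uniqueness} say nothing about the length of that path beyond the volume bound: a geodesic inside the box has $O(s^d)=O(\log^{2d}n)$ hops, far more than $\log^2 n$.

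Neither of your repairs works.
\begin{itemize}
\item \emph{Smaller boxes.} To bring the volume bound down to $\log^2 n$ you need $s\lesssim\log^{2/d}n$; side $\log^{1/2}n$ already fails for $d\ge 5$. At that scale \cref{lem:non-uniform-uniqueness} only gives failure probability $\exp(-c\log^{2/d}n)$ per box. This is nowhere near $\exp(-c\log^2 n)$, and for $d\ge 3$ it does not even survive the union bound over the $\gtrsim n^{d\beta}$ boxes needed to cover the patch. So the union bound does not ``tolerate'' such $\phi$ anywhere.
\item \emph{Boxes of side $\log n$.} These give $O(\log^d n)$ hops, again too many for $d\ge 3$. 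Moreover, there are no ``many independent attempts'' for a fixed location of $p$. If the configuration near $p$ is bad, the path must detour through farther regions, and bounding the length of such detours is precisely a multiscale renormalization statement.
\end{itemize}

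\textbf{The missing ingredient is a chemical-distance bound of Antal--Pisztora type.} The paper uses it as \cref{non-uniform-Antal-Pisztora}: for the distorted graph $G^\nu$ of \cref{lem:identify-G-S}, $\Prob_{x,y}(x\leftrightarrow y,\ d_G(x,y)\ge t)\le e^{-ct}$ whenever $t\ge\rho\|x-y\|$. Take $t=\log^2 n$, restrict to pairs at Euclidean distance at most $\gamma r\log^2 n$ with $\gamma\le(10\rho)^{-1}$, and sum over pairs via \cref{Palm theory}. This gives the event $\cC_3$ with failure probability $\exp(-c\log^2 n)$. The exponent now comes from the hop count $t$, not from the size of any box.

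If you add this and run your Steps 1--3 at scale $\gamma\log^2 n$ instead of $\log^2 n$ (so that $\rho\|u(p)-p'\|\le\log^2 n$ in rescaled units), your argument becomes a valid alternative to the paper's. Your Step 1 and crossing Step 3 would then replace the paper's two connectivity events:
\begin{itemize}
\item $\cC_1$, which the paper gets from the density of $C_\infty$ (\cref{Yao o close in infinite comp}) plus a single uniqueness event for $G^-$ on $[-\delta/L,\delta/L]^d$;
\item $\cC_2$, which the paper gets from a single uniqueness event for $G_S$ on that box, together with the observation that the global giant's piece near $q$ cannot be small there without the whole giant being small.
\end{itemize}

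Your sprinkling variant of Step 3 is not usable as written. The arm depends on the sprinkled points, and Step 1 only gives proximity of $\kappa$ at scale $\log^2 n$, not the unit scale that variant relies on.
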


\cref{sparse-giant-nearby} is the more complicated event and we will essentially prove \cref{lem:all-near-giant} along the way.  We first define three intermediate events.

Recalling the definition of $\lambda_{-1}$ from \cref{sec:setting-up-processes}, let $L = 10\max\{\mu(\lambda_{-1}),d\}$.  Define  $\kappa$ to be the connected
component of $G^-(r)$ containing
the connected component of 
$G^-(r) \cap [-\delta/L,\delta/L]^d$ of largest diameter.  For a parameter $\gamma \in [(\log n)^{-1/2},1]$ define the following events:
    \begin{gather*}
        \cC_1:=
        \left\{ 
        \forall p \in  B(0,\delta/2L),
        \exists p' \in \kappa \mbox{ such that } 
        \|p - p'\| \le \gamma r \log^2 n 
        \right\},\\
        \cC_2:=
        \left\{ 
            \forall p \in \kappa
            \cap B(0,\delta/L),
            \forall q \in 
            u(\mathring{G}^{n}_M(r) 
            \cap B_M(x,\delta/2L)),
            p \xleftrightarrow{G_S} q
        \right\},\\
        \cC_3:=
        \left\{ \begin{array}{c}
            \forall p,q \in G_S \cap u(B_M(x,\delta/L))
            \mbox{ such that } 
            p \xleftrightarrow{G_S} q\\
            \text{ and } \|p-q\| \le  \gamma r \log^2 n,
            \mbox{ we have } d_{G_S}(p,q) \le 
             \log^2 n
        \end{array} \right\}.
    \end{gather*}

    The event $\cC_1$ states that all points in $B(0,\delta/2L)$ are near the component $\kappa$; $\cC_2$ states that each point of the global giant $\mathring{G}_M^n(r)$ near $x$ is connected to $\kappa$ once we map to Euclidean space by $u$; and finally $\cC_3$ states that nearby points connected in $G_S$ cannot have too large graph distance between them.  We will ultimately see that $\cC_1 \cap \cC_2 \cap \cC_3 \subset \cC$ for $n$ sufficiently large. 

    We prove each event occurs with high probability in its own subsection, and introduce relevant facts about percolation when needed.

\subsection{The event \texorpdfstring{$\cC_1$}{C1}: all points are close to \texorpdfstring{$\kappa$}{kappa}}
We show $\cC_1$ occurs with high probability.  

\begin{lem} \label{P(C)-large}
    In the context of \cref{sparse-giant-nearby}, there is $n_0 = n_0(\lambda_0,d,\eps,\beta)$ and $c = c(\lambda_0,d,\eps,\beta) > 0$ so that $\Prob(\cC_1^c) \leq \exp(-c \gamma \log^2 n)$ for $n \geq n_0$.
\end{lem}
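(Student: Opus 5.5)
Everything can be done in the rescaled homogeneous picture. Recall $G^-(r) = e^{-\eta} r\,\Ghat^-$ with $\Ghat^- = G^{\lowerlambda}_{\R^d}(1)\cap((e^{-\eta}r)^{-1} S)$, where $\lowerlambda \ge \lambda_{-1} > \lambda_c$ ranges over the finite net $\mathcal{N}$. After dividing by $e^{-\eta} r$, the box $[-\delta/L,\delta/L]^d$ becomes a box $[-s,s]^d$ with $s = \delta e^{\eta}/(Lr) \ge c\, n^{\beta}$. The ball $B(0,\delta/2L)$ becomes a ball of radius about $s/2$. The target proximity $\gamma r\log^2 n$ becomes $t := e^{\eta}\gamma\log^2 n \ge (\log n)^{3/2}$.

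It therefore suffices to show the following for homogeneous continuum percolation at intensity $\lowerlambda$ with connection radius $1$. With probability at least $1-\exp(-ct)$, every point of the half-box lies within Euclidean distance $t$ of the component $\widehat\kappa$ containing the largest-diameter cluster of $\Ghat^-\cap[-s,s]^d$. Since $\mathcal{N}$ is finite, all constants can be taken uniform in $\lambda$.

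\textbf{Step 1 (crossing clusters in small boxes).} Cover $B(0,s/2)$ by $O((s/t)^d)$ boxes $Q_i$ of side $t/(2\sqrt d)$, with centers on a grid. For each $i$, let $Q_i'$ be the concentric box of side $t/\sqrt d$; it lies inside $[-s,s]^d$ for $n$ large, since $t\ll s$. Apply \cref{lem:non-uniform-uniqueness} on $Q_i'$, with $\sigma$ the Euclidean metric, $\nu=\lowerlambda\mathcal{L}$ and $\phi$ a small multiple of $t$. Outside probability $Ct^d e^{-ct}$, the graph restricted to $Q_i'$ has a unique component of diameter at least $\phi$.

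We also need such a component to exist, i.e.\ a crossing cluster. For this I would use the standard supercritical fact (Penrose, Ch.~10, the same source adapted in \cref{sec:appendix}) that in a box of side $m$, with probability at least $1-e^{-cm}$ there is a cluster of diameter $\ge m/2$. This cluster then has a vertex within distance $t/\sqrt d\le t$ of every point of $Q_i$.

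\textbf{Step 2 (gluing).} Adjacent boxes $Q_i', Q_j'$ overlap in a region containing a box of side of order $t$. Apply the same uniqueness and existence statements in that overlap, exactly as in the proof of \cref{lem:giant-is-unique}. The large cluster of the overlap is a large cluster of both $Q_i'$ and $Q_j'$, so the two local giants lie in one component of $\Ghat^-$. The grid graph on the boxes is connected, so all local giants lie in a single component $\kappa'$.

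Finally, apply \cref{lem:non-uniform-uniqueness} on $[-s,s]^d$ itself with $\phi = t$. Then $\kappa'$, whose diameter is of order $s$, contains the largest-diameter cluster of $\Ghat^-\cap[-s,s]^d$, so $\kappa'=\widehat\kappa$. A union bound over $O((s/t)^d)\le n^{O(1)}$ boxes gives failure probability at most $n^{O(1)}e^{-ct}\le \exp(-c'\gamma\log^2 n)$. This uses $\gamma\log^2 n\ge \log^{3/2}n \gg \log n$.

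\textbf{Main obstacle.} The delicate point is the existence half of Step 1, since \cref{lem:non-uniform-uniqueness} only supplies uniqueness. I would try to obtain existence from the same renormalization / Antal--Pisztora-type input used to prove that lemma in the appendix. Failing that, I would use the exponential decay of finite-cluster sizes together with the positive density of $C_\infty(\lowerlambda)$, giving a point of $C_\infty$ in each $Q_i$ whose cluster inside $Q_i'$ has diameter at least $\phi$. A secondary issue is checking that all constants are uniform over $\lowerlambda\in\mathcal N$ and over the scale parameter $\gamma$; both are handled by finiteness of $\mathcal{N}$ and the lower bound $\gamma\ge(\log n)^{-1/2}$.
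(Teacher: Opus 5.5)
Your argument is correct, but it reaches $\cC_1$ by a different route than the paper.

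The paper applies \cref{lem:non-uniform-uniqueness} only once, on the big box $[-\delta/L,\delta/L]^d$ with threshold $\gamma r\log^2 n$. It then uses the infinite cluster $C_\infty^-$ of the whole-space process $G^{\lowerlambda}_{\R^d}(1)$ as the witness. Any component of $(e^{-\eta}r)C_\infty^-\cap[-\delta/L,\delta/L]^d$ meeting $B(0,\delta/2L+\gamma r\log^2 n)$ has diameter at least $\delta/2L-\gamma r\log^2 n$, so it lies in $\kappa$ by uniqueness. That every point is close to $C_\infty^-$ comes from \cref{Yao o close in infinite comp} and a union bound over lattice points; this part even has the stronger rate $\exp(-c(\gamma\log^2 n)^{d-1})$.

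You instead build the witness by hand at the mesoscopic scale $t$. You use local existence and uniqueness in boxes of side of order $t$, glue across overlaps as in the proof of \cref{lem:giant-is-unique}, and use one global uniqueness statement to identify the glued cluster with $\kappa$.

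The paper's proof is shorter, but it relies on the homogeneous infinite-volume picture and an extra external estimate. Yours uses nothing beyond finite-box input from \cref{lem:non-uniform-uniqueness}. So it would apply equally to the distorted graph $G^\nu$ of \cref{lem:identify-G-S}, at the price of more bookkeeping.

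The ``main obstacle'' you identify is not one. \cref{lem:non-uniform-uniqueness} states that \emph{exactly} one component has diameter at least $\phi_s$, so existence is already part of its conclusion; the appendix obtains it from Penrose's Prop.~10.13. Neither of your fallback arguments is needed.

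A small slip: a vertex of the local giant in $Q_i'$ is within $\mathrm{diam}(Q_i')=t$ of each point of $Q_i$, not within $t/\sqrt d$. This is still what you need.
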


This requires a piece of the literature from continuum percolation which states that all points are close to the (almost-surely) unique infinite cluster:

\begin{lem}[Lemma 3.3, Yao, Chen, and Guo~\cite{YCG11}] \label{Yao o close in infinite comp}
    Suppose $\lambda > \lambda_c$ and let $C_\infty$ denote the (almost-surely) unique infinite component of $G_{\R^d}^\lambda(1)$. Then there exists a constant $c = c_\lambda>0$ such that for all $R \geq c^{-1}$ we have 
    \[
        \Prob(B_{\R^d}(0,R) \cap C_\infty = \emptyset) \le \exp(-c R^{d-1}).
    \]  
\end{lem}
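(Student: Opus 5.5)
The plan is to prove the bound by a static renormalization (coarse-graining) argument, in the spirit of Penrose--Pisztora, followed by a Peierls-type count of blocking surfaces. The surface-order exponent $R^{d-1}$ should come from the isoperimetric size of any set of bad blocks that cuts the ball off from infinity.

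\emph{Step 1: good blocks.} Fix a large block size $K$ and tile $\R^d$ by cubes $Q_z = Kz + [0,K)^d$, $z \in \Z^d$. Call $Q_z$ \emph{good} if the following hold in the enlarged cube $3Q_z$ (the union of $Q_z$ and its $3^d - 1$ neighbours):
\begin{itemize}
\item $3Q_z$ contains a unique cluster crossing it in every coordinate direction;
\item every cluster of $G_{\R^d}^\lambda(1) \cap 3Q_z$ with Euclidean diameter at least $K/10$ is contained in that crossing cluster;
\item $Q_z$ itself meets the crossing cluster.
\end{itemize}
For $\lambda > \lambda_c$, the continuum analogue of Penrose--Pisztora (see \cite[Ch.~10]{Pen03}, in the spirit of \cref{lem:non-uniform-uniqueness}) gives
\[
\P(Q_z \text{ bad}) \le \exp(-c'K).
\]
The uniqueness clause forces the crossing clusters of two adjacent good blocks to belong to the same cluster, because their overlap region contains a crossing of diameter at least $K$. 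The events $\{Q_z \text{ good}\}$ depend only on the Poisson points in $3Q_z$, so they form a $2$-dependent site process on $\Z^d$. By Liggett--Schonmann--Stacey, this process stochastically dominates Bernoulli site percolation with parameter $p(K) \to 1$ as $K \to \infty$.

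\emph{Step 2: from the event to a blocking surface.} Let $A$ be the set of $z$ with $Q_z \subset B(0,R)$, so $|A| \asymp (R/K)^d$. Suppose $B(0,R) \cap C_\infty = \emptyset$. Consider the infinite cluster of good sites in the coarse lattice, which exists and is unique since $p(K)$ is close to $1$. Its sites carry crossing clusters that chain together into an unbounded, hence infinite, cluster of $G^\lambda$, and that cluster is $C_\infty$. Therefore no site of $A$ can belong to the infinite good coarse cluster. There are then two cases.
\begin{itemize}
\item Fewer than half the sites of $A$ are good. This costs at most $\exp(-c(R/K)^d)$ by a Chernoff bound for the dominating Bernoulli field.
\item At least half the sites of $A$ are good but none lies in the infinite good cluster. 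Then there is a $*$-connected set $\Gamma$ of bad sites separating a set $A' \subset A$ with $|A'| \ge |A|/2$ from infinity. By the discrete isoperimetric inequality, $|\Gamma| \ge c(R/K)^{d-1}$.
\end{itemize}

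\emph{Step 3: Peierls count.} Anchor $\Gamma$ at one of the $O((R/K)^d)$ sites within distance $O(R/K)$ of the origin. There are at most $C_d^m$ $*$-connected sets of size $m$ containing a given site. Each such set is entirely bad with probability at most $(1-p(K))^{m}$ under the domination. Summing over $m \ge c(R/K)^{d-1}$ gives
\[
O\bigl((R/K)^d\bigr) \cdot \bigl(C_d(1-p(K))\bigr)^{c(R/K)^{d-1}},
\]
which is at most $\exp(-cR^{d-1})$ once $K$ is fixed large and $R \ge c^{-1}$ absorbs the polynomial prefactor.

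The main obstacle is Step 1: establishing the block estimate $\P(Q_z \text{ bad}) \le \exp(-c'K)$ for \emph{all} $\lambda > \lambda_c$, and not merely for large $\lambda$. This is the genuinely nontrivial supercritical input, namely uniqueness of the crossing cluster with exponentially small failure probability, which in the lattice case requires Grimmett--Marstrand slab technology. I would import it from \cite[Ch.~10]{Pen03} rather than reprove it. The remaining steps are routine.
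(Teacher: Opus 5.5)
The paper gives no proof of this lemma. It is quoted from \cite{YCG11} and used as a black box, so there is no in-paper argument to compare against. Your reconstruction, coarse-graining followed by a Peierls count, is the standard way to prove surface-order bounds of this kind, and most of it is sound:
\begin{itemize}
\item The block estimate is the only genuinely supercritical input. You only need $\P(Q_z \text{ bad})\to 0$ as $K\to\infty$, not an exponential rate, because Liggett--Schonmann--Stacey does the rest. That weaker statement follows from the crossing and uniqueness estimates of \cite[Ch.~10]{Pen03} (cf.\ \cref{lem:non-uniform-uniqueness}).
\item Chaining the crossing clusters of adjacent good blocks works: the overlap of $3Q_z$ and $3Q_{z'}$ contains a piece of diameter about $2K$.
\item On $\{B(0,R)\cap C_\infty=\emptyset\}$, no site of $A$ lies in an infinite good cluster $\mathcal I$, as you say.
\end{itemize}

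The anchoring in Step 3 fails as written, though it is easily repaired. Nothing forces the bad surface to lie within $O(R/K)$ of the origin. Suppose $\mathcal I$ avoids a huge region around $A$ in which every block is good. Then the only bad blocks sit on the boundary of that region, arbitrarily far away, and your sum over $O((R/K)^d)$ anchors misses this configuration. The Case~2 claim also needs an actual argument.

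The following repair handles both points and makes your Chernoff case unnecessary.
\begin{itemize}
\item Since $A$ is connected and disjoint from $\mathcal I$, let $D$ be the component of $\Z^d\setminus\mathcal I$ containing $A$.
\item Every site outside $D$ that is adjacent to $D$ lies in $\mathcal I$. Hence the inner boundary $\partial D$ consists of bad sites, since a good site adjacent to $\mathcal I$ would belong to $\mathcal I$.
\item $\Z^d\setminus D$ is connected, because each of its components touches $D$ and therefore meets the connected set $\mathcal I$. So $\partial D$ is $*$-connected (Deuschel--Pisztora, Tim\'ar).
\item If $D$ were infinite, one-endedness of $\Z^d$ would make $\partial D$ an infinite bad $*$-connected set. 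Under the domination this has probability $0$ once $1-p(K)$ is below the $*$-site percolation threshold.
\item So $D$ is finite and $\partial D$ surrounds all of $A$. Isoperimetry then gives $|\partial D|\ge c(R/K)^{d-1}$, whatever fraction of $A$ is good.
\end{itemize}

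To fix the anchor, note that $\partial D$ meets the ray $\{je_1:j\ge 0\}$ at some $j_0e_1$ and also meets the opposite ray. Being $*$-connected, it therefore satisfies $|\partial D|\ge j_0$. Anchoring at $j_0e_1$ and summing over $j_0$ gives
\[
\sum_{j\ge 0}\ \sum_{m\ge \max\{c(R/K)^{d-1},\,j\}} \bigl(C_d(1-p(K))\bigr)^m \le \exp(-c'R^{d-1})
\]
for fixed large $K$ and $R\ge c^{-1}$, which is the bound you wanted.
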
    

We are now ready to handle the event $\cC_1$:
\begin{proof}[Proof of \cref{P(C)-large}]
First, consider the uniqueness event
\begin{equation*}
U_r^- = \left\{
\begin{array}{c}
\text{at most one component of the graph }
G^-(r) \cap [-\delta/L,\delta/L]^d \\
\text{has Euclidean diameter }\geq r \gamma \log^2n  
\end{array}
\right\}\,.
\end{equation*}
Note that by our choice of $L$
we have $[-\delta/L,\delta/L]^d \subset B_{\R^d}(0,e^{-\eta} \delta) \subset S$.
Therefore, 
\[G^-(r) \cap [-\delta/L,\delta/L]^d
=
(e^{-\eta} r)G^{\lowerlambda}_{\R^d} \cap 
[-\delta/L,\delta/L]^d, \]
and so $$\Prob((U^-_r)^c) \le  \exp(-c \gamma \log^2 n)$$  for $c = c(d,\lambda_0,\beta) > 0$ and $n \geq n_0(d,\lambda_0,\beta)$ by \cref{lem:non-uniform-uniqueness}.

Now, let $C_\infty^-$ be
the unique infinite component
of $G^{\lowerlambda}_{\R^d}$.
Note that whenever $r>0$ is
sufficiently small that
$2 \gamma r \log^2 n  \le \delta/2L$, we 
have 
\begin{equation}\label{eq:move-to-Cinf}
    U^-_r \cap
    \left\{ (e^{-\eta}r)C_\infty^- \cap B_{\R^d}\left(0,\frac{\delta}{2L}
    +  \gamma r \log^2n \right)
    \ne \emptyset \right\}
    \subset 
    \left\{
        (e^{-\eta}r)C_\infty^- \cap B_{\R^d}\left(0,\frac{\delta}{2L} 
        +  \gamma r \log^2 n \right)
        \subset \kappa
    \right\}.
\end{equation}
This is because any component
of $(e^{-\eta} r)C_\infty^- \cap [-\delta/L,\delta/L]^d$
which intersects $B_{\R^d}(0,(\delta/2L) + \gamma r \log^2n)$
has Euclidean
diameter at least 
$\delta/2L - \gamma r \log^2n \ge \gamma r \log^2n $;
$U^-_r$ then implies that such a component
must lie in $\kappa$.
Thus, on $U^-_r$, to find a nearby point
in $\kappa$, it suffices to find a
nearby point in $(e^{-\eta}r)C^-_\infty$.

Therefore, using \eqref{eq:move-to-Cinf}, \cref{obs:rescale}, and 
\cref{Yao o close in infinite comp} 
we have
\begin{align*}
    \Prob(\cC_1^c \cap U^-_r)
    &\le
    \Prob\left(
    \begin{array}{c}
        \exists p \in B_{\R^d}(0, \delta/(2Le^{-\eta}r))
        \text{ such that } \forall p' \in C_\infty^-,\\
        \|p - p'\| >  e^\eta \gamma \log^2n
    \end{array}
    \right) \\
    &\le
    \sum_{\substack{p \in \Z^d \\ \|p\| \le \delta/(2Le^{-\eta}r)}}
    \Prob\left( 
    B_{\R^d}(p, \gamma e^\eta \log^2n - \sqrt{d}/2)
    \cap C^-_\infty = \emptyset
    \right) \\
    &\le C L^d r^{-d} \exp(-c'(\gamma \log^2 n)^{d - 1})
\end{align*}
for $c' = c'(\lowerlambda) > 0$ and $C = C(d) > 0$.  Since $d \geq 2$, 
for $n \geq n_0(\lowerlambda,d,\lambda_0,\beta)$ and some $c'' =  c''(\lowerlambda,d,\lambda_0,\beta)$ we have
$\Prob(\cC_1^c) \le \Prob(\cC_1^c \cap U^-_r)
+ \Prob((U^-_r)^c)
\le \exp(-c'' \gamma \log^2 n)$,
as desired. 
\end{proof}

\subsection{The event \texorpdfstring{$\cC_2$}{C2}: the global giant contains a chunk of \texorpdfstring{$\kappa$}{kappa}}

We now move on to handling $\cC_2$, which will follow from \cref{lem:non-uniform-uniqueness} and \cref{P(C)-large}.

\begin{lem} \label{P(E)-large}
In the context of \cref{sparse-giant-nearby}, there is $n_0 = n_0(\lambda_0,d,\eps,\beta)$ and $c = c(\lambda_0,d,\eps,\beta) > 0$ so that $\Prob(\cC_2^c) \le \exp(-c\log^2 n)$ for $n \geq n_0$.
\end{lem}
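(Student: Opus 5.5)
We argue on the intersection of several high-probability uniqueness events and show that there $\cC_2$ holds deterministically. The guiding idea is that $\kappa$ has large Euclidean diameter (comparable to $\delta/L$), so it sits inside the unique large component of $G_S$ near $x$. The global giant also has large diameter and passes near $x$, so it must share that local component.

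\textbf{Step 1: $\kappa$ is large.} By \cref{P(C)-large} with $\gamma = 1$, with probability at least $1-\exp(-c\log^2 n)$ every point of $B(0,\delta/2L)$ lies within $r\log^2 n$ of $\kappa$. In particular, $\kappa \cap B(0,\delta/L)$ contains points at Euclidean distance at least $\delta/4L$ from each other once $r\log^2 n \le \delta/100L$. This holds for $n\ge n_0$, since $r\le n^{-\beta}$.

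\textbf{Step 2: uniqueness in $G_S$ and identification of $\kappa$.} Apply \cref{lem:non-uniform-uniqueness} to $\widehat G_S$ on a cube slightly larger than $(e^{-\eta}r)^{-1}[-\delta/L,\delta/L]^d$. This uses the metric $\sigma$ and measure $\nu$ from \cref{lem:identify-G-S}, whose density is at least $\lambda e^{-(d+1)\eta}\ge \lowerlambda > \lambda_c$, and we take $\phi_s = \log^2 n$. With probability at least $1-Cr^{-d}\exp(-c\log^2 n)\ge 1-\exp(-c'\log^2 n)$, there is exactly one component of $G_S$ restricted to this cube with Euclidean diameter at least $e^{-\eta}r\log^2 n$; call it $\kappa_S$. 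By \eqref{eq:ghat-inclusions}, $G^-(r)\subset G_S$, so every component of $G^-(r)$ lies in a single component of $G_S$. In particular, the piece of $\kappa$ inside $[-\delta/L,\delta/L]^d$ (the defining large component of $G^-(r)\cap[-\delta/L,\delta/L]^d$) has diameter $\gg r\log^2 n$ and therefore lies in $\kappa_S$.

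A subtlety arises here: $\kappa$ is the full component in $G^-(r)$, which may leave the cube. I would first try to handle this by showing that all of $\kappa\cap B(0,\delta/L)$ is connected within $G_S$ to the defining piece. This is immediate, because $\kappa$ is connected in $G^-(r)\subset G_S$ and $G_S$ lives on all of $S$. Hence $p\xleftrightarrow{G_S}\kappa_S$ for all $p\in\kappa\cap B(0,\delta/L)$.

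\textbf{Step 3: the global giant meets $\kappa_S$.} Take $q\in u(\mathring G^n_M(r)\cap B_M(x,\delta/2L))$. By \cref{lem:giant-is-unique}, with high probability $\mathring G^n_M(r)$ has $d_M$-diameter of order $1$, which is far larger than $\delta$. The component of $q$ in $G_S$ therefore contains a path from $q$ that must exit the ball $u(B_M(x,\delta/L))$. Its restriction to the cube thus contains a connected piece of Euclidean diameter at least of order $\delta/L - \delta/2L \gg r\log^2 n$, and by uniqueness that piece equals $\kappa_S$. Hence $q\xleftrightarrow{G_S}\kappa_S\xleftrightarrow{G_S} p$, which is $\cC_2$. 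A union bound over the three failure events gives $\Prob(\cC_2^c)\le \exp(-c\log^2 n)$.

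\textbf{Main obstacle.} The delicate point is the bookkeeping of regions in Steps 2 and 3. We need a single cube, contained in $S$ up to the $e^{\pm\eta}$ distortion, that contains both $B(0,\delta/L)$ and $u(B_M(x,\delta/2L))$ with enough margin. Within that cube, the path segment from $q$ up to its first exit must have diameter at least $\log^2 n$ in rescaled units. Since $L\ge 10d$, the cube $[-\delta/L,\delta/L]^d$ (or one a constant factor larger) sits inside $B(0,e^{-\eta}\delta)$, so this should go through with margin.
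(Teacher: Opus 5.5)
Your argument is correct and is essentially the paper's proof: the paper also intersects $\cC_1$ (taken with $\gamma=1$) with the uniqueness event $U$ for $G_S\cap[-\delta/L,\delta/L]^d$ from \cref{lem:non-uniform-uniqueness}, and shows $U\cap\cC_1\subset\cC_2$ by routing both $p$ and $q$ through the unique large local component of $G_S$.

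The only difference is in Step 3, and your version there needs a small repair. The paper gets the large local piece near $q$ by noting that a piece of diameter $<r\log^2 n$ around $q$ would be an entire component, contradicting maximality of $\mathring G^n_M(r)$ against the large component containing $\kappa$. Your appeal to \cref{lem:giant-is-unique} also works, but its constant $c$ has no relation to $\delta$, so "far larger than $\delta$" is unjustified. Instead, let the path exit a $d_M$-ball of radius $\rho$ about $u^{-1}(q)$ with $r\log^2 n\ll\rho\le\min\{c/2,\delta/4L\}$, rather than requiring it to exit $u(B_M(x,\delta/L))$; alternatively, shrink $\delta$.
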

\begin{proof}
    Consider the following uniqueness event on $G_S$:
    \[
        U :=
        \left\{
        \begin{array}{c}
        \mbox{ at most one component of }
        G_S \cap [ -\delta/L, \delta/L ]^d \\ \mbox{ has Euclidean diameter 
        at least } r \log^2 n
        \end{array}
        \right\},
    \]
    Again, our choice of $L$
    ensures that 
    \[ G_S \cap [ -\delta/L, \delta/L ]^d =
    G^\nu \cap [ -\delta/L, \delta/L ]^d\]
    and therefore $\Prob(U^c) \le \exp(-c \log^2n)$ for $c = c(d,\lambda_0,\beta) > 0$ and $n \geq n_0(d,\lambda_0,\beta)$ by \cref{lem:non-uniform-uniqueness} where we use \cref{lem:identify-G-S} to verify the hypotheses. 

    Now we claim that, for $r>0$ sufficiently
    small, we have
    \begin{equation} \label{eq:LB-E}
        U \cap \cC_1 \subset \cC_2
    \end{equation}
    where we take $\gamma = 1$ for this instance of $\cC_1$. 
    Assume that $U \cap \cC_1$ holds,
    and let
    $p \in \kappa \cap B_{\R^d}(0, \delta/L)$
    and 
    $q \in u\left(\mathring{G}_M^{n}(r)
    \cap B_M(x, \delta/2 L)\right)$.
    If no such $p$ and $q$ exist then
    $\cC_2$ holds vacuously.
    Recall that $G^-(r) \subset G_S$,
    and so there is a unique 
    connected component $\kappa'$
    of $G_S \cap [-\delta/L,\delta/L]^d$ containing the largest connected
    component of $G^- \cap [-\delta/L,\delta/L]^d$. The event 
    $U \cap \cC_1$ implies that 
    the Euclidean diameter of $\kappa'$
    is at least
    $\delta/L - 2 \gamma r \log^2 n \geq r \log^2n$,
    for $n$ sufficiently large.
    By $U$, $\kappa'$ is the \emph{unique}
    connected component of
    $G_S \cap [-\delta/L,\delta/L]^d$
    of Euclidean diameter at least $r \log^2 n$.

    With this in mind, let $\xi$ be the connected component of $u\left(\mathring{G}_M^{n}(r)
    \cap B_M(x, \delta)\right) \cap 
    [-\delta/L, \delta/L]^d$
    containing $q$.  To prove that $p \xleftrightarrow{G_S} q$, it is sufficient to show that $\xi$ has Euclidean diameter at least $r \log^2 n$. 
    For $n$ sufficiently large we have that
    $r \log^2 n < \delta/2L$.  Thus if the {Euclidean} diameter of $\xi$ is at most $r \log^2 n$ then $\xi \subset B(q,r \log^2n)$; since $u^{-1}(\xi) \subset \mathring{G}_M^n(r)$, this would imply that $u^{-1}(\xi) = \mathring{G}^{n}_M(r)$.  
    But then the $d^\lam_r$-diameter
    of $\mathring{G}_M^{n}(r)$
    is at most $O(\log^{2d}n)$,
    which contradicts the
    definition of $\mathring{G}_M^{n}(r)$
    as the largest component of $G_M^{n}(r)$, since the component of $G_M^{n}(r)$
    containing $\kappa$ has
    $d^\lam_r$-diameter at least 
    $\Omega(1/r)$ by event $\cC_1$.  This shows \eqref{eq:LB-E}, and so $\Prob(\cC_2^c) \le \Prob(U^c) + \Prob(\cC_1^c)
    = O(\exp(-c \log^2 n))$ by \cref{P(C)-large}.
\end{proof}

\subsection{The event \texorpdfstring{$\cC_3$}{C3}: geodesics do not wander too far}

Our final step before proving \cref{sparse-giant-nearby} is to handle the event $\cC_3$:

\begin{lem} \label{P(D)-large} In the context of \cref{sparse-giant-nearby}, there is $n_0 = n_0(\lambda_0,d,\eps,\beta)$, $c = c(\lambda_0,d,\eps,\beta) > 0$ and $\gamma = \gamma(\lambda_0,d)$ so that
    $\Prob(\cC_3^c) \le \exp(-c \log^2n)$ for $n \geq n_0$.
\end{lem}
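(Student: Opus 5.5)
The natural tool here is an Antal--Pisztora type chemical-distance bound for the distorted process $G^\nu$ of \cref{lem:identify-G-S}, the non-homogeneous analogue of the bound the earlier footnote refers to as \cref{non-uniform-Antal-Pisztora}. The reason to work with $G_S$ itself, rather than squeezing between $G^-$ and $G^+$, is that the comparison goes the wrong way for upper bounds. Graph distances in $G_S\subset G^+$ are at least those in $G^+$. Meanwhile $p,q$ need not be vertices of $G^-$. So I would work directly with $G_S \equiv G^\nu\cap (e^{-\eta}r)^{-1}S$.

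The form I would state (and prove, or adapt from the homogeneous Antal--Pisztora argument, in the same way \cref{lem:non-uniform-uniqueness} is adapted in \cref{sec:appendix}) is the following. There are $\rho=\rho(\lambda_0,d)<\infty$ and $c>0$ such that, for every metric $\sigma$ and measure $\nu$ as in \cref{lem:non-uniform-uniqueness}, all points $a,b$ and all $t\ge 1$,
\[
\Prob_{a,b}\bigl(a \xleftrightarrow{G^\nu} b,\ d_{G^\nu}(a,b) > \rho\max\{\|a-b\|,t\}\bigr)\le \exp(-ct).
\]
The proof sketch runs as follows.
\begin{enumerate}[(i)]
\item Coarse-grain $\R^d$ into boxes of a large fixed side length $N$. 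Call a box good if the restricted graph has a unique crossing cluster, and if all pairs of its points that are connected within the box are joined by a path of length at most $C N^{d}$.
\item Goodness is $1$-dependent and has probability close to $1$. This follows from \cref{lem:non-uniform-uniqueness} together with the density lower bound $d\nu/d\mathcal{L}\ge\lambda_0$; the upper bound $d\nu/d\mathcal{L}\le \lambda e^{(d+1)\eta}$ is used only to bound vertex counts, and since $C N^d$ may be chosen depending on $\lambda$ this causes no harm.
\item A Peierls/stochastic-domination argument (Liggett--Schonmann--Stacey) shows that a lattice path of good boxes exists from near $a$ to near $b$ of length $O(\max\{\|a-b\|,t\}/N)$, outside probability $\exp(-ct)$.
\item The large-deviation part is the standard one: bad regions separating $a$ or $b$ from this good path are enclosed by $*$-connected sets of bad boxes, whose size has exponential tails.
\end{enumerate}

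Given this bound, I would apply it with the rescaling $r\mapsto 1$ (so Euclidean distance $\gamma r\log^2 n$ becomes $\gamma e^{\eta}\log^2 n$) and with $t=\log^2 n/(2\rho)$. I would then choose $\gamma=\gamma(\lambda_0,d)\le 1/(2\rho e^{\eta})$, so that $\rho\max\{\|p-q\|,t\}\le \log^2 n$ for every pair in $\cC_3$.

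To union bound over all pairs of vertices, I would use the bivariate Mecke formula (\cref{Palm theory}). The expected number of bad pairs in $u(B_M(x,\delta/L))$ at distance at most $\gamma r\log^2 n$ is at most
\[
\nu\text{-mass}^{\,2}\cdot \exp(-c\log^2 n)\le O\bigl(\lambda^2 r^{-2d}\bigr)e^{-c\log^2 n}=O(n^{2})e^{-c\log^2 n},
\]
which is absorbed for $n\ge n_0$ since $r\le n^{-\beta}$. For large $\lambda$ (where $\upperlambda=\infty$) the density of $\nu$ is not bounded above uniformly. I would handle this either by tracking $\lambda\le n$ polynomially in the union bound, or by noting that the Antal--Pisztora proof only uses the lower density bound once vertex counts are controlled by Poisson tails.

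The main obstacle is step (iii)--(iv): establishing the Antal--Pisztora estimate in the non-homogeneous, distorted-metric setting, uniformly over all $(\sigma,\nu)$ in the class. This requires the box-goodness probability to tend to $1$ uniformly in $(\sigma,\nu)$, which is where \cref{lem:non-uniform-uniqueness}, with its constants depending only on $(d,\lambda_0)$, is essential. I would first try to reduce goodness to events monotone in the point process, so that the coupling $G^-(r)\subset G_S$ can be used for the crossing-cluster part, with non-monotone issues confined to the uniqueness statement already controlled by \cref{lem:non-uniform-uniqueness}.
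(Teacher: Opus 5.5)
Your overall route matches the paper's: identify $\Ghat_S$ with $G^\nu$ via \cref{lem:identify-G-S}, apply an inhomogeneous Antal--Pisztora bound (\cref{non-uniform-Antal-Pisztora}), and take $\gamma\lesssim\rho^{-1}$. Then sum over pairs with the bivariate Mecke formula, which gives $\nu\bigl((e^{-\eta}r)^{-1}S\bigr)^2e^{-c\log^2 n}=O(n^2)e^{-c\log^2 n}$.

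\textbf{The gap: uniformity in $\lambda$.} The lemma must hold for every $\lambda\ge\lambda_0$ with $\gamma,c$ independent of $\lambda$, and here $\lambda=nr^d$ can be polynomially large in $n$. In step (ii) you bound in-box path lengths through vertex counts, let $C$ depend on $\lambda$, and say this ``causes no harm''. It does cause harm: $\rho$ then grows with $\lambda$. The requirement $\rho\max\{\|p-q\|,t\}\le\log^2 n$ then forces $\gamma\lesssim\rho^{-1}$ and $t\lesssim\log^2 n/\rho$. So $\gamma$ is no longer $\gamma(\lambda_0,d)$, and the tail $e^{-ct}$ becomes useless once $\lambda$ is large compared with $\log^2 n$. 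Neither of your remedies repairs this:
\begin{enumerate}[(a)]
\item The Mecke sum is already $O(n^2)$ regardless of $\lambda$, since $\lambda r^{-d}=n$. The union bound is therefore not where $\lambda$ hurts.
\item Poisson tails only bound vertex counts by order $\lambda N^d$, which still grows with $\lambda$.
\end{enumerate}

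\textbf{The missing idea.} The in-box chemical distance is bounded deterministically, independent of the intensity. Let $x_1,\dots,x_t$ be a geodesic confined to a box. Minimality forces $\sigma(x_i,x_j)>1$ whenever $|i-j|\ge 2$, and hence $\|x_i-x_j\|>1$. So the radius-$1/2$ balls about every other vertex are disjoint, and a geodesic inside a box of side $N$ has length at most $C_dN^d$, whatever the density of $\nu$. This is exactly what the paper's appendix uses. With it, goodness of boxes needs only the lower density bound (via \cref{lem:non-uniform-uniqueness}), and $c,\rho$ depend only on $d$ and the lower density bound. With this substitution the rest of your argument is the paper's proof.

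\textbf{A small point to record.} $\cC_3$ concerns $d_{G_S}$, not $d_{G^\nu}$. Connectivity in $G_S$ implies connectivity in $G^\nu$. Also, any $G^\nu$-path of at most $\log^2 n$ steps starting in $u(B_M(x,\delta/L))$ has Euclidean extent $O(r\log^2 n)\ll\delta$ in the original coordinates, so it stays inside $S$. Hence the two distances agree on the relevant event.
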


This follows quickly from a continuous version of the Antal-Pisztora theorem \cite{AP96} proved in \cite[Lem.~3.4]{YCG11}:

\begin{lem} \label{non-uniform-Antal-Pisztora}
    For each $\lam_0 > \lam_c(d)$ there is $c(d,\lam_0) > 0, \rho(d,\lam_0) < \infty$
    such that the following holds.

    Let $\sigma$ be a metric on $\R^d$
    which satisfies
    \[
        \sigma(x,y) \le \|x - y\| \le 2\sigma(x,y)
    \]
    for all $x,y \in \R^d$.
    Let $\nu$ be a positive measure on
    $\R^d$ which satisfies 
    $\frac{d\nu}{d\mathcal{L}} \ge \lam_0$.
    Let $G$ be the random geometric
    graph
    whose vertices are the
    points of a Poisson process on $\R^d$
    with intensity measure $\nu$
    and whose edge set consists
    of pairs of vertices with
    $\sigma$-distance at most 1.

    Then for any $x,y \in \R^d$
    and any $t \ge \rho\|x-y\|$ we have
    \[
        \Prob_{x,y}( x \stackrel{G}{\longleftrightarrow} y, d_G(x,y) \ge t)
        \le \exp(-ct).
    \]
\end{lem}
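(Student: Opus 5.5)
The plan is to prove \cref{non-uniform-Antal-Pisztora} by the standard Antal--Pisztora renormalization (coarse-graining) argument. The point to check is that every estimate needed is uniform over all admissible pairs $(\sigma,\nu)$. Two deterministic facts drive this uniformity.

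\emph{Monotone coupling.} Since $\frac{d\nu}{d\mathcal{L}} \ge \lam_0$, we may write the vertex set of $G$ as the superposition of a homogeneous Poisson process of intensity $\lam_0$ and an independent extra process. Since $\sigma \le \|\cdot\|$, every pair at Euclidean distance at most $1$ is joined in $G$. Hence $G$ contains a copy $H$ of $G^{\lam_0}_{\R^d}(1)$, whose law does not depend on $(\sigma,\nu)$.

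\emph{Packing bound.} Along a $G$-geodesic $v_0,\dots,v_m$, the points $v_i$ and $v_{i+2}$ are not adjacent, so $\sigma(v_i,v_{i+2})>1$ and hence $\|v_i-v_{i+2}\|>1$. Every edge also has Euclidean length at most $2$. Therefore a geodesic segment staying inside a box of side $N$ has at most $C(d)N^d$ vertices. This holds deterministically, even though $\nu$ has no upper bound on its density.

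Fix a large $N$ and tile $\R^d$ by boxes $B_z = Nz + [0,N)^d$, $z\in\Z^d$, with enlargements $B_z' = Nz + [-N,2N)^d$. Call $z$ \emph{good} if two conditions hold:
\begin{enumerate}[(i)]
\item $H\cap B_z'$ has a cluster crossing $B_z'$ in every coordinate direction;
\item $G\cap B'_z$ has exactly one component of Euclidean diameter at least $N/10$.
\end{enumerate}
Condition (i) is an event for the fixed process $H$. Supercriticality of $\lam_0$ and standard crossing estimates give it probability $1-e^{-cN}$.

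Condition (ii) is exactly what \cref{lem:non-uniform-uniqueness} controls, uniformly in $(\sigma,\nu)$. Its proof in \cref{sec:appendix} applies verbatim on a translated box. It gives probability at least $1-CN^de^{-cN}$.

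The good-block indicator field is $2$-dependent. By Liggett--Schonmann--Stacey it therefore dominates Bernoulli site percolation on $\Z^d$ with density $p(N)\to1$, uniformly in $(\sigma,\nu)$. Take $N=N(d,\lam_0)$ large enough that $*$-connected clusters of bad sites have exponentially small size tails. The Palm conditioning on $x,y$ changes the configuration only in the boxes containing them; we simply declare those boxes bad.

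Next comes the deterministic gluing step. On good neighboring blocks, the crossing clusters of (i) have diameter at least $N/10$. By (ii) they therefore coincide, so crossing clusters of adjacent good blocks are joined inside the union of their enlargements.

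Suppose $x\leftrightarrow y$ in $G$. Take the straight line of blocks from $x$ to $y$, about $\|x-y\|/N$ of them. Let $\Gamma$ be this set together with every bad $*$-cluster adjacent to it. The component of $x$ either stays within the bad $*$-cluster of $x$'s block, or it reaches a good block. In the latter case it has diameter at least $N/10$ there, so by (ii) it merges into that block's crossing cluster; the same holds for $y$. Routing through the outer boundaries of the bad clusters, which consist of good blocks, we obtain a $G$-path from $x$ to $y$ that stays in the union of enlargements of blocks of $\Gamma$. By the packing bound, together with the fact that geodesics are not longer than this path, we get
$$d_G(x,y)\le C N^{d}\,|\Gamma|.$$
In the remaining case, $x$ and $y$ lie in one cluster confined to a bad $*$-cluster $A$. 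Then the same bound holds with $|A|$ in place of $|\Gamma|$.

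Finally, if $t\ge\rho\|x-y\|$ with $\rho = 2CN^{d-1}$, then $d_G(x,y)\ge t$ forces a bad $*$-cluster of size at least $t/(2CN^d)$ touching the segment or the blocks of $x,y$. A Peierls count over the $O(\|x-y\|/N)\le O(t)$ starting blocks shows this has probability at most $e^{-ct}$, after shrinking $c$.

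I expect the main obstacle to be condition (ii): uniqueness of large components with uniform constants under both the metric distortion and an unbounded density. Here I rely on \cref{lem:non-uniform-uniqueness} exactly as adapted in the appendix. I also have to make sure the gluing uses only \emph{Euclidean} diameters, which are comparable to $\sigma$-diameters by the factor $2$, so that the argument never needs an upper bound on $\nu$.
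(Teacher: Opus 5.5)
Your overall architecture matches the paper's. Both use block renormalization as in \cite{YCG11,AP96}, with the uniform good-block probability supplied by \cref{lem:non-uniform-uniqueness}. Both dominate the good-block field by an i.i.d.\ field of density $p_1$ close to $1$, reroute the connecting path into a union of enlarged blocks, and use a packing bound for paths that are minimal inside that region.

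\textbf{The gap is in your last step.} From $d_G(x,y)\le CN^d|\Gamma|$, $d_G(x,y)\ge t$ and $t\ge\rho\|x-y\|$, you only learn that the \emph{total} size of the bad $*$-clusters (with boundaries) met by the coarse path $a_0,\dots,a_n$, $n\asymp\|x-y\|/N$, is at least of order $t/N^d$. This does not force any single cluster to be large. Up to $n+1$ distinct clusters can contribute. For $t$ of order $\rho\|x-y\|$, which is the critical case, the largest one is only guaranteed size of order $t/(nN^d)\asymp\rho/N^{d-1}$, a constant. So a union bound over $O(t)$ starting blocks of a one-cluster tail bound gives nothing useful. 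What must be controlled are configurations with many moderate bad clusters strung along the segment: each is unlikely, but there are combinatorially many placements.

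\textbf{The missing ingredient} is the Antal--Pisztora estimate for the union of clusters along a path; this is what the paper imports when it bounds $|W|$ ``as in \cite{AP96}''. Let $C(a_j)$ be the bad $*$-cluster of $a_j$ together with its boundary (or $\{a_j\}$ if $a_j$ is good). If $1-p_1$ is small enough (depending on $d$) and $\kappa$ is large, then $\Prob(|\bigcup_{j}C(a_j)|\ge m)\le e^{-cm}$ for all $m\ge\kappa(n+1)$. It is proved by a lattice-animal count for the whole collection:
\begin{itemize}
\item the bad sites involved form disjoint $*$-connected sets, each containing some $a_j$;
\item there are at most $2^{n+1}(2C_d)^k$ such collections of total size $k$, and each is entirely bad with probability at most $(1-p_1)^k$;
\item $|\bigcup_j C(a_j)|\le n+1+3^dk$.
\end{itemize}
Choosing $\rho$ so that $t/(CN^d)\ge\kappa(n+1)$ then yields $e^{-ct}$.

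Two smaller points also need repair.
\begin{itemize}
\item Condition (ii) is not monotone in the point set: adding $x$ can merge several small components into a second large one. So you must force bad every block whose \emph{enlargement} contains $x$ or $y$, not only the blocks containing them.
\item The claim ``the component of $x$ reaches a good block, so it has diameter at least $N/10$ there'' is not true as stated. What holds is that a path from $x$ which leaves the enlargements of a bad cluster and its boundary must run from some good boundary block $B_w$ to the outside of $B_w'$. This gives a piece of Euclidean diameter about $N$ in $B_w'$. Also, when $x$ sits in a hole of a bad cluster, you must use pieces of the actual $G$-path across the bad region, as in \cite{AP96}, rather than only gluing crossing clusters along outer boundaries.
\end{itemize}
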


The stated version of \cite[Lemma 3.4]{YCG11} does not allow for our required inhomogeneity and so we prove \cref{non-uniform-Antal-Pisztora} in \cref{sec:appendix}.

\begin{proof}[Proof of \cref{P(D)-large}]
    Recall that $\Ghat_S = G^\nu \cap (e^{-\eta}r)^{-1} S$ where \cref{lem:identify-G-S} allows us to identify $\Ghat_S$ in terms of a distorted metric and measure.  Therefore, using \cref{Palm theory} and \cref{non-uniform-Antal-Pisztora} and $\gamma \leq (10\rho)^{-1}$
    allows us to bound
    \begin{equation*}
        \Prob(\cC_3^c)
        \le \left[\frac{\lam e^{d\eta}}{r^d}\mathrm{Vol}_M(B_M(x,\delta))\right]^2
        \exp(-c \log^2n) \leq \exp(-c'\log^2 n)\,. \qedhere
    \end{equation*}
\end{proof}

\subsection{Putting the pieces together: proofs of \texorpdfstring{\cref{sparse-giant-nearby}}{Lemma 18} and \texorpdfstring{\cref{lem:all-near-giant}}{Lemma 17}}
We start with the proof of \cref{sparse-giant-nearby}:

\begin{proof}[Proof of \cref{sparse-giant-nearby}]
    Choose
    $L' := 4 L$ and $\gamma$ so that \cref{P(D)-large} holds.      Using the fact that
    for all $p,q \in \mathring{G}^{n}_M(r) \cap B_M(x,\delta)$ we have $d^\lam_r(p,q) {\leq } d_{G_S}(u(p), u(q))$ 
    and the fact that $u$
    has distortion at most $e^\eta$, we claim that 
    \begin{equation} \label{eq:lower-bound-B}
        \cC_1 \cap \cC_2 \cap \cC_3  \subset \cC
    \end{equation}
    for $n$ sufficiently large.  To see this, let $p \in \mathring{G}_M^n(r) \cap B_M(x,\delta/L')$.  By $\cC_1$, there is some $p' \in \kappa$ so that $\|u(p) - p'\| \leq \gamma r \log^2 n$.  By $\cC_2$ we have $p' \xleftrightarrow{G_S} u(p)$.  By $\cC_3$ we have $d_r^\lam(p,u^{-1}(p')) \leq d_{G_S}(p',u(p)) \leq \log^2 n.$ 
    The desired bound on $\Prob(\cC^c)$
    follows from combining \eqref{eq:lower-bound-B} with \cref{P(C)-large}, \cref{P(E)-large}, and \cref{P(D)-large}.
 \end{proof}

We now prove \cref{lem:all-near-giant}, which essentially follows from \cref{P(C)-large}.

\begin{proof}[Proof of \cref{lem:all-near-giant}]
    Recall that $\mathring{y}$ is equal to the (almost-surely unique) $z \in \mathring{G}_M^n(r)$ so that $d_M(y,z) = d_M(y,\mathring{G}_M^n(r))$.  It will be sufficient to show that for all $y \in M$ there is some point $z \in \mathring{G}_M^n(r)$ so that $d_M(z,y) \leq r \log^2 n.$  Since $r \geq \Omega_{d,M}(n^{-1/d})$ we may find an $r$-net $\mathcal{S} \subset M$ in the metric $d_M$ of size $|\mathcal{S}| \leq n^{2}$, say, for $n$ sufficiently large.  We then have \begin{align} \label{eq:net-for-near-giant}
        \P\left(\exists~y \in M:d_M(y,\mathring{y}) \geq r \log^2 n  \right) \leq \sum_{y \in \mathcal{S}} \P\left(d_M(y,\mathring{y}) \geq (1/2)r \log^2 n  \right)\,.
    \end{align}
    Choose $\lowerlambda$ and $\eta > 0$ via \cref{sec:setting-up-processes} for some fixed $\eps \in (0,1)$.  We may then use compactness of $M$ and \cref{lem:almostflat} to choose a finite set of points $x_1,\ldots,x_k$ along with maps $u_{j}$ and radii $\delta_{j}$ so that $\bigcup_{j = 1}^k B_M(x_j, \delta_j) = M$ and $u_j,\delta_j$ satisfy the conclusion of \cref{lem:almostflat}.  By \cref{P(C)-large} along with \cref{lem:giant-is-unique} we have that $$\P\left(d_M(y,\mathring{y}) \geq (1/2)r \log^2 n \right) \leq \exp(-\Omega_{M,\beta}( \log^2 n))$$ for all $n$ sufficiently large.   Combining with \eqref{eq:net-for-near-giant} completes the proof.
\end{proof}

\section{Metric comparison bounds on patches} \label{sec:patch}

In this section we prove our main technical lemma towards \cref{thm:main}.  For a given point $x \in M$ and parameters $\delta,\eps > 0$ define the event $$\mathcal{G}_{\delta,\eps}(x) = \left\{\forall p, q \in \mathring{G}^{n}_M(r) \cap B_M(x,\delta)
\mbox{ such that } d_M(p,q) \ge r \log^3 n, \frac{r d^\lam_r(p,q)}{\mu(\lam) d_M(p,q)} \in 
[e^{-\epsilon},e^\epsilon]  \right\}\,. $$

This event indicates that the metric $d_r^\lambda$ on $\mathring{G}_M^n(r)$ near $x \in M$ looks locally like $\mu(\lambda) d_M$.  We show it holds with high probability.

\begin{lem} \label{lem:patches}
    Fix $\beta,\eps >0, \lambda_0 > \lambda_c$ and $x \in M$.  
    There are constants $c = c(\lambda_0,d,\eps,\beta) > 0$, $n_0 = n_0(\lambda_0,d,\eps,\beta)$ and $\delta' = \delta'(x,\eps,\lambda_0,d)$ so that if $r \leq n^{-\beta}$ and $\lambda \geq \lambda_0$ then  $$\P(\mathcal{G}_{\delta',\eps}(x)) \geq 1 - \exp(-c \log^2n)\,.$$
\end{lem}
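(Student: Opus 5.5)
The plan is to prove the two inequalities in $\mathcal{G}_{\delta',\eps}(x)$ separately. The lower bound on $d^\lam_r$ comes from the sandwich $G_S \subset G^+(r)$ in \eqref{eq:ghat-inclusions}. The upper bound comes from $G^-(r)\subset G_S$, combined with \cref{sparse-giant-nearby} to move from points of the global giant to the $G^-$-component $\kappa$. Throughout, $\eta,\lowerlambda,\upperlambda$ are as in \cref{sec:setting-up-processes}. We take $\delta'=\delta/L''$ for a large constant $L''\ge L'$ depending on $\mu(\lambda_{-1})$ and $d$. \cref{large-deviations-time-constant} is applied with $\xi=\eps/10$ only to the finitely many intensities in $\mathcal{N}$, so all constants are uniform in $\lambda\ge\lambda_0$. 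The pair condition $d_M(p,q)\ge r\log^3 n$ becomes rescaled Euclidean distance $\gtrsim\log^3 n$. The shape theorem error $\exp(-c\log^3 n)$ therefore beats the polynomial number of vertex pairs, which we control by the multivariate Mecke formula (\cref{Palm theory}).

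\emph{Lower bound.} Take a geodesic in $G_M^n(r)$ between $p,q\in B_M(x,\delta')$.
\begin{itemize}
\item If it leaves $B_M(x,\delta)$, its length is at least $(\delta-\delta')/r$. For $L''$ large this exceeds $e^{\eps}\mu(\lambda_{-1})\cdot 2\delta'/r\ge e^{\eps}\mu(\lam)d_M(p,q)/r$, which is more than needed.
\item Otherwise the path lies in $G_S\subset G^+(r)$. After rescaling by $(e^\eta r)^{-1}$, the points $u(p),u(q)$ are connected vertices of $G^{\upperlambda}_{\R^d}(1)$, and distance in the restricted graph dominates distance in the full graph. By Mecke over pairs of points of the $\upperlambda$-process together with \cref{large-deviations-time-constant}, with high probability all such pairs satisfy $d\ge e^{-\xi}\mu(\upperlambda)\|u(p)-u(q)\|/(e^\eta r)$.
\end{itemize}
Using \cref{lem:almostflat}, $\mu(\lambda)/\mu(\upperlambda)\le e^{\eps/2}$ and $\eta\le\eps/10$ gives $rd^\lam_r\ge e^{-\eps}\mu(\lam)d_M$. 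When $\upperlambda=\infty$ we instead use the trivial bound $rd^\lam_r\ge d_M$ together with $\mu(\lambda)\le e^{\eps/2}$, which follows from \eqref{eq:upper-lower-TC-bounds} with $\mu(+\infty)=1$.

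\emph{Upper bound.} On the event $\cC$ of \cref{sparse-giant-nearby}, each of $p,q$ in the giant is within graph distance $\log^2 n$ of points $p',q'\in\kappa$, at Euclidean distance $O(r\log^2n)$. This is negligible against $d_M(p,q)\ge r\log^3n$. We then need $d_{G^-}(p',q')\le e^{\xi}\mu(\lowerlambda)\|p'-q'\|/(e^{-\eta}r)$.
\begin{itemize}
\item Near the origin, $\kappa$ agrees with the rescaled infinite cluster $C^-_\infty$ of $G^{\lowerlambda}_{\R^d}(1)$. This is the argument behind \eqref{eq:move-to-Cinf} using $U_r^-$, and we take $\cC_1$ with points of $C^-_\infty$ as the nearby points. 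So $p',q'$ may be taken in $C^-_\infty$, and \cref{large-deviations-time-constant} plus Mecke bounds their full-space $\lowerlambda$-graph distance.
\item The full-space geodesic must be shown to stay inside $(e^{-\eta}r)^{-1}S$, so that it is a path in $G^-(r)\subset G_S$. Its Euclidean length is at most $(e^{-\eta}r)\cdot 2\mu(\lambda_{-1})\|p'-q'\|\le 4\mu(\lambda_{-1})\delta'$, so it stays within $B(0,\delta)$ once $L''\ge 10\mu(\lambda_{-1})$. This is exactly why $L$ involves $\mu(\lambda_{-1})$.
\end{itemize}
Finally, $\mu(\lowerlambda)\le e^{\eps/2}\mu(\lambda)$, the $e^{\pm\eta}$ distortions, and the additive $2\log^2n=o(\log^3 n)$ give $rd^\lam_r\le e^{\eps}\mu(\lam)d_M$.

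The main obstacle is the upper bound's passage from the random points $p',q'\in\kappa$ to points at which the Palm-conditioned shape theorem can be applied. One must check that $\kappa$ really contains $C^-_\infty$ locally, and apply the union bound over all pairs of $C^-_\infty$ in the window via the bivariate Mecke formula. Controlling the geodesic's excursion so that a full-space geodesic is a genuine path in $G_S$ is the other delicate point. The lower bound is comparatively routine.
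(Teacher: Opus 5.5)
Your proposal is correct and is essentially the paper's proof. The paper packages your two shape-theorem union bounds as the events $A^{\pm}$ of \cref{lem:Ar-events}, using the same excursion argument to keep full-space geodesics inside the patch, and then checks $A^+\cap A^-\cap\cC\subset\mathcal{G}_{\delta/L',\eps}(x)$ just as you do. The step you flag as the main obstacle is unnecessary: \cref{large-deviations-time-constant} is a Palm statement on the event $\{x\leftrightarrow y\}$ for arbitrary points, so any $p',q'\in\kappa$ (connected in $G^-(r)$, hence in the full-space $\lowerlambda$-graph) are already covered by the Mecke union bound, with no need to identify $\kappa$ with $C^-_\infty$.
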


The main work is to first show that we can apply the quantitative shape theorem, \cref{large-deviations-time-constant}, and use the event $\cC$ from \cref{sparse-giant-nearby} to move from the local giant component to the global giant component.

\subsection{Applying the shape theorem locally}\label{sec:localshape}

For a fixed $x \in M, \lambda_0 > \lambda_c$ and $\eps > 0$, recall that we have chosen $\delta > 0$ and $\eta > 0$ in \cref{sec:setting-up-processes}.  For all $\lambda \geq \lambda_0$ we have defined the graphs $G^{\pm}(r)$, their rescaled versions $\Ghat^{\pm}$, and the values $\lowerlambda, \upperlambda$ in \cref{sec:setting-up-processes}.  Crucially, we recall that for each $\lambda_0$ and $\eps > 0$, there are only finitely many choices for $\lowerlambda$ and $\upperlambda$.

We will show a quantitative version of the 
shape theorem holds for $G^\pm(r)$ by applying \cref{large-deviations-time-constant} to the rescaled graphs $\Ghat^\pm$. 
Let $d_r^{\pm}$ be the standard graph distance on $G^{\pm}(r)$.  Define $L = 10 \mu(\lambda_{-1})$ where $\lambda_{-1}$ is defined in \cref{sec:setting-up-processes}. 
Define the event
\begin{equation} \label{eq:A_r-def}
    A^- = \left\{\begin{array}{c}
    \forall~p,q \in G^-(r) \text{ with } d_r^-(p,q) < \infty \text{, }
    \|p\|,\|q\| \le \delta/L
    \text{, and }  \\
    \|p - q\| \geq e^{-\eta} r \log^2n \text{, we have } \frac{ rd_r^-(p,q)}{\|p - q\|} \leq e^{2\eta} \mu(\lowerlambda)
      \end{array} \right\}\,.
\end{equation}
For $\lambda \in [\lambda_0,\lambda_+ e^{(d+1)\eta}]$ (where we recall $\lambda_+$ is defined at \eqref{eq:lam-+}) define

\begin{equation} \label{eq:A_r+def}
     A^+ = \left\{\begin{array}{c}
    \forall~p,q \in G^+(r) \text{ with } d_r^+(p,q) < \infty \text{, }
    \|p\|,\|q\| \le \delta/L
    \text{, and }  \\
    \|p - q\| \geq e^{-\eta} r \log^2n \text{, we have } \frac{ rd_r^+(p,q)}{\|p - q\|} \geq e^{-2\eta} \mu({\upperlambda})
    \end{array} \right\} 
\end{equation}
while for $\lambda > \lambda_+ e^{(d+1)\eta}$ note that by \eqref{eq:lam-+} we deterministically have  
\begin{equation}\label{eq:A+-large-lam}
	\frac{r d_r^+(p,q)}{\|p - q\|} \geq e^{-\eta} \geq e^{-\eps/5} \mu(\upperlambda)
\end{equation}
and so for convenience for $\lambda > \lambda_+ e^{(d+1)\eta}$ we define $A^+$ to be any almost-sure event.
The main step in this subsection is to prove that $A^{\pm}$ hold with high probability.

\begin{lem}\label{lem:Ar-events}
    Fix $\beta,\eps >0, \lambda_0 > \lambda_c$ and $x \in M$.  
    There are constants $c = c(\lambda_0,d,\eps,\beta,M) > 0$ and $n_0 = n_0(\lambda_0,d,\eps,\beta,M)$ so that if $r \leq n^{-\beta}$ and $\lambda \geq \lambda_0$ then  $\P(A^{\pm}) \geq 1 - \exp(-c \log^2 n)$ for all $n \geq n_0$.
\end{lem}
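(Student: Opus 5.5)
The plan is to rescale to the homogeneous Euclidean graphs $\Ghat^\pm$, apply \cref{large-deviations-time-constant} to each pair of vertices via the bivariate Mecke formula, and finish with a union bound. I treat $A^-$ in detail; $A^+$ is symmetric.

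Recall that $G^-(r) = e^{-\eta} r\, \Ghat^-$, where $\Ghat^- = G^{\lowerlambda}_{\R^d}(1) \cap ((e^{-\eta}r)^{-1} S)$. Since $\|p\|,\|q\| \le \delta/L$ and $[-\delta/L,\delta/L]^d$ sits well inside $S$, the event $A^-$ follows from a statement about the full-space graph $G^{\lowerlambda}_{\R^d}(1)$. Fix points $\hat p,\hat q$ with $\|\hat p\|,\|\hat q\| \le \delta/(L e^{-\eta} r)$ and $\|\hat p - \hat q\| \ge \log^2 n$. We want them, whenever connected in $\Ghat^-$, to satisfy $d(\hat p,\hat q) \le e^{\eta}\mu(\lowerlambda)\|\hat p-\hat q\|$; the graph distance is unchanged by the scaling, so this translates back to the bound in $A^-$ with room to spare.

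The first key point is that connection inside $\Ghat^-$ implies connection in $G^{\lowerlambda}_{\R^d}(1)$. The second concerns the graph distance, which could a priori be smaller in the full-space graph. That direction does not matter for the upper bound: restricting to a subgraph only increases distances, and the \emph{whole-space} distance is what \cref{large-deviations-time-constant} controls. So the issue runs the other way. For $A^-$ we need the distance within $\Ghat^-$ to be small, but the whole-space geodesic might leave $(e^{-\eta}r)^{-1}S$.

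To handle this, I would show that with the whole-space distance at most $e^{\eta}\mu(\lowerlambda)\|\hat p - \hat q\|$, the geodesic stays within Euclidean distance $e^{\eta}\mu(\lowerlambda)\cdot 2\delta/(L e^{-\eta} r)$ of the origin. By the choice $L = 10\mu(\lambda_{-1}) \ge 10\mu(\lowerlambda)$, this is less than $e^{-\eta}\delta/(e^{-\eta} r)$ for small $\eta$. The geodesic therefore lies in the rescaled $S$, and the two distances coincide.

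For the probability bound, use the bivariate Mecke formula (iterated \cref{Palm theory}). The expected number of bad pairs in a ball of radius $O(r^{-1})$ is at most
\[
\lowerlambda^2 \cdot O(r^{-2d}) \cdot \sup_{\hat p,\hat q} \Prob_{\hat p,\hat q}(\text{bad}).
\]
By \cref{large-deviations-time-constant} with $\xi = \eta$, the supremum is at most $\exp(-c\|\hat p-\hat q\|) \le \exp(-c\log^2 n)$. Since $r \ge n^{-1/d}$ up to constants (because $\lambda \ge \lambda_0$), the polynomial prefactor is absorbed. The constant $c$ depends on $\lowerlambda$, but this ranges over the finite net $\mathcal N$, so a uniform $c$ exists. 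That is exactly why the net was introduced.

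The event $A^+$, for $\lambda \le \lambda_+ e^{(d+1)\eta}$, is analogous and in fact easier: a subgraph of $G^{\upperlambda}_{\R^d}(1)$ has larger distances, so the lower bound from \cref{large-deviations-time-constant} on the whole-space distance transfers directly. Connectivity in the restricted graph implies whole-space connectivity, so the event in the theorem applies, and no containment argument for geodesics is needed. The factor $e^{-2\eta}$ absorbs the $e^{-\xi}$ loss along with the scaling between $e^{\eta}r$ and the ratio $r\,d/\|p-q\|$. For $\lambda > \lambda_+e^{(d+1)\eta}$, $A^+$ is almost sure by definition.

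The main obstacle is the geodesic-localization step for $A^-$. It requires the whole-space geodesic to be confined to the patch, and getting the constants to close depends on $L$ being large relative to $\mu(\lowerlambda)$. The Mecke/union-bound bookkeeping is routine by comparison.
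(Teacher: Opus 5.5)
Your proposal is correct and follows essentially the paper's proof. You rescale to the full-space graph $G^{\lowerlambda}_{\R^d}(1)$, rule out geodesics leaving the rescaled patch using $L = 10\mu(\lambda_{-1}) \ge 10\mu(\lowerlambda)$, then apply \cref{large-deviations-time-constant} with the bivariate Mecke formula and get uniform constants from the finiteness of the possible values of $\lowerlambda,\upperlambda$. The paper phrases localization as the contrapositive (a wandering geodesic already has length at least $e^{\eta}\mu(\lowerlambda)\|p-q\|$), and your remark that $A^+$ needs no localization, since restricting to a subgraph only increases distances, correctly sharpens the paper's ``exactly analogous.''
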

\begin{proof}
    We handle the bound on $A^-$ and then indicate how the bound on $A^+$ follows similarly. We first claim that 
    \begin{align}
    (A^-)^c &\subset
    \left\{
    \begin{array}{c}
        \exists p,q \in \Ghat^-
        \text{ with } 
        d_{\Ghat^-}(p,q) < \infty \text{, } 
        \|p\|,\|q\| \le \frac{\delta}{L}(e^{-\eta}r)^{-1}, \\
        \|p - q\| \geq \log^2n,
        \text{ and } \frac{d_{\Ghat^-}(p,q)}{{\mu(\lowerlambda)}\|p - q\|}
        \notin [e^{-\eta},e^\eta]
    \end{array}
    \right\} \nonumber 
    \\
    &\subset \left\{
    \begin{array}{c}
        \exists p,q \in G^{\lowerlambda}_{\R^d}(1)
        \text{ with } 
        d_{G^{\lowerlambda}_{\R^d}}(p,q) < \infty \text{, } 
        \|p\|,\|q\| \le \frac{\delta}{L}(e^{-\eta}r)^{-1}, \\
        \|p - q\| \geq \log^2n,
        \text{ and } \frac{d_{G^{\lowerlambda}_{\R^d}}(p,q)}{{\mu(\lowerlambda)}\|p - q\|}
        \notin [e^{-\eta},e^\eta] \label{eq:Arminus-bound}
    \end{array}
    \right\}
    \end{align}
    where we note that first inclusion follows from $G^-(r) = (e^{-\eta}r)\Ghat^-$ by \eqref{eq:Ghat-defs}.  To see the second inclusion,  suppose 
    that $p,q \in \Ghat^- \cap B(0, (\delta/L)(e^{-\eta}r)^{-1})$ satisfy $\|p-q\| \ge \log^2 n$,
    $d_{\Ghat^-}(p,q) < \infty$,
    and $d_{\Ghat^-}(p,q)/\|p-q\| \notin [e^{-\eta},e^\eta]$.
    If a $G^{\lowerlambda}_{\R^d}$-geodesic from $p$ to $q$ lies
    within ${(e^{-\eta} r)^{-1}} \cdot S$, then we have
    \[
        d_{G^{\lowerlambda}_{\R^d}}(p,q)
        = d_{\Ghat^-}(p,q) \notin [e^{-\eta} \mu(\lowerlambda)\|p-q\|,e^\eta  \mu(\lowerlambda)\|p-q\|].
    \]
    On the other hand, if a $G^{\lowerlambda}_{\R^d}$-geodesic
    leaves ${(e^{-\eta} r)^{-1}} \cdot S$, it contains at least two disjoint subpaths
    connecting $B(0, (\delta/L)(e^{-\eta}r)^{-1})$
    to $((e^{-\eta} r)^{-1} \cdot S)^c \subset 
    ((e^{-\eta} r)^{-1} \cdot
    B(0, e^{-\eta} \delta ))^c$,
    and so we have 
    \begin{equation}\label{eq:if-geodesic-wanders}
        d_{G^{\lowerlambda}_{\R^d}}(p,q) 
        \ge 2\left(e^{-\eta}\delta - \frac{\delta}{L}\right)
        (e^{-\eta} r)^{-1}
        \ge e^{\eta} \mu(\lowerlambda) \frac{2\delta}{L}
        (e^{-\eta} r)^{-1}
        \ge e^\eta \mu(\lowerlambda)\|p-q\|
    \end{equation}
    where the second inequality follows from our
    choice of $L$ along with \cref{lem:limit-time-constant}.
    Thus, we have established \eqref{eq:Arminus-bound}.
     
    Therefore, by \cref{large-deviations-time-constant} and \cref{Palm theory} we may bound 
    \begin{align*}
        \P((A^-)^c) &\leq 
        \E\left[ 
        \sum_{\substack{p,q \in G^{\lowerlambda}_{\R^d}(1) 
        \cap B(0,r^{-1}): 
        \|p - q\| \geq  \log^2 n}} \one\left\{
        d_{G^{\lowerlambda}_{\R^d}}(p,q)/\|p - q\|
        \notin [e^{-\eta}\mu(\lowerlambda),e^\eta\mu(\lowerlambda)] \cup \{\infty\}
        \right\} \right] \\
        &\leq C_d \lowerlambda^2 r^{-d}  
        \exp(-c_{\lowerlambda,\eta,d} \log^2n)\,.
    \end{align*}
	Recalling that for fixed $\eps > 0,\lambda_0 > \lambda_c$ there are only finitely many choices for $\lowerlambda$ and $r \geq \Omega_d(n^{-1/d})$,  we may bound $$ C_d \lowerlambda^2 r^{-d}  
        \exp(-c_{\lowerlambda,\eta,d} \log^2n) \leq  \exp(-c_{\lambda_0,\eps,d,\beta}' \log^2n )$$
        for $n \geq n_0(\lambda_0,\eps,d,\beta)$.  This 
   completes the bound on $A^-$.
        The proof for $A^+$ is exactly analogous.
\end{proof}

\subsection{Proof of \texorpdfstring{\cref{lem:patches}}{Lemma 24}}
We are now ready to combine the pieces to prove \cref{lem:patches}.

\begin{proof}[Proof of \cref{lem:patches}]
    For $x \in M, \lambda_0 > \lambda_c$ and $\eps > 0$, we have chosen parameters $\delta > 0$ and $\eta > 0$ in \cref{sec:setting-up-processes}; further, we have also chosen $\lowerlambda$ and $\upperlambda$ for all $\lambda \geq \lambda_0$ in addition to the map $u$ from \cref{lem:almostflat} with small distortion.  
    With these choices, we proceed by showing $A^+ \cap A^- \cap \cC
        \subset
        \mathcal{G}_{\delta/L',\epsilon}(x)$
    and then our desired result
    will follow from
    \cref{lem:Ar-events}
    and \cref{sparse-giant-nearby}.  
    So assume $A^+, A^-,$ and $\cC$ all hold.
    Let $p, q \in \mathring{G}^{n}_M(r) \cap B_M(x,\delta/L')$ with $d_M(p,q) \ge r \log^3 n$.
    We want to show that 
    \begin{equation} \label{eq:M-local-need} e^{-\eps} \leq 
        \frac{r d^\lam_r(p,q)}{\mu(\lam)d_M(p,q)} \leq e^\eps
    \end{equation}
    We first show the lower bound.  Note that by the distortion bound on $u$ from \cref{lem:almostflat} we have 
    $\|u(p) - u(q)\| \ge e^{-\eta}r \log^3 n$.
    First consider the case where $d^\lam_r(p,q) < d_{G_S}(u(p),u(q))$; in this case, we have that each geodesic from $p$ to $q$ must wander far enough to contain a vertex outside of $B_M(x,\delta)$.  We must have $rd^\lam_r(p,q) \geq 2\delta(1 - 1/L')$.  Since $d_{M}(p,q) \leq 2\delta/L'$, by the choice of $L'$ we have $r d_r^\lam(p,q) \geq e^{-\eps} \mu(\lambda) d_M(p,q).$  Thus we may assume $d_r^\lam(p,q) = d_{G_S}(u(p),u(q))$.    
    Now, since $u(G^n_M(r) \cap B_M(x,\delta)) = G_S \subset G^+$, we have $u(p), u(q) \in G^+$
    and 
    \[
        d^\lam_r(p,q) = d_{G_S}(u(p),u(q))
        \ge d^+_r(u(p),u(q)) \,.
    \]
    Recall that we have defined $A^+$ separately in the case of $\lambda \geq \lambda_+e^{(d+1)\eta}$, in which case we note that the lower bound in \eqref{eq:M-local-need} follows immediately from \eqref{eq:upper-lower-TC-bounds} and \eqref{eq:A+-large-lam}.  In the case of $\lambda \leq \lambda_+e^{(d+1)\eta}$ we may use $A^+$, 
    the distortion bound on $u$ and \eqref{eq:upper-lower-TC-bounds} to see 
    \begin{align*}
       rd^{\lam}_r(p,q) &\ge rd^+_r(u(p),u(q)) \ge e^{-2\eta} \mu(\upperlambda)\|u(p) - u(q)\|  \ge e^{-3\eta - (\epsilon/2)} \mu(\lam) d_M(p,q)
       \ge e^{-\epsilon} {\mu(\lam)}d_M(p,q)
    \end{align*}
    as desired.

    For the upper bound in \eqref{eq:M-local-need}, use $\cC$
    to find $p',q'$ in the same component
    of $G^-$ satisfying
    \begin{equation}\label{eq:B-application}
        d^\lam_r(p, u^{-1}(p')) \leq \log^2n \quad \text{ and } \quad d^\lam_r(q, u^{-1}(q'))
    \le \log^2n\,.
    \end{equation}
    
    Since $G^- \subset G_S$, we have
    $p',q' \in G_S$ and
    \begin{equation} \label{eq:d-lam-UB-dr-}
        d^{\lam}_r(u^{-1}(p'),u^{-1}(q'))
        \leq d_{G_S}(p',q') \le d_r^-(p',q').
    \end{equation}
    By the triangle inequality  we have 
    \begin{equation} \label{eq:p'-q'-LB}
    \|p' - q'\| \ge \|u(p) - u(q)\| - 2r \log^2 n 
    \ge e^{-\eta}d_M(p,q) - 2r \log^2n \ge e^{-2\eta} r \log^3 n
    \end{equation}
    where the last bound is for $n$ sufficiently large since we assumed $d_M(p,q) \geq r \log^3 n$.  
    Equation \eqref{eq:p'-q'-LB} ensures we may apply $A^-$, and so by $A^-$ along with \eqref{eq:B-application} and \eqref{eq:d-lam-UB-dr-}  
    to see 
    \begin{align}
        rd^\lam_r(p,q) &\le rd^\lam_r(u^{-1}(p'),u^{-1}(q')) + 4r \log^2 n \le rd^-_r(p',q') + 4r \log^2n \le e^\eta \mu(\lowerlambda)\|p' - q'\| + 4r \log^2 n\,.  \label{eq:dlamr-UB}
        \end{align}
    By the distortion bounds on $u$ along with \eqref{eq:upper-lower-TC-bounds} we have \begin{align}
        e^\eta \mu(\lowerlambda)\|p' - q'\|  \leq e^{2\eta + (\epsilon/2)} \mu(\lam) d_M(u^{-1}(p'),u^{-1}(q')) \leq e^{2\eta + \epsilon/2} \mu(\lam) (d_M(p,q) + 2r \log^2 n) \label{eq:patches-UB-time-constant-eq}
    \end{align}
    where in the last line we used \eqref{eq:B-application}.  Recalling that $d_M(p,q) \geq r \log^3 n$, combining \eqref{eq:dlamr-UB} and \eqref{eq:patches-UB-time-constant-eq} for $n$ large enough shows \eqref{eq:M-local-need} completing the proof. 
\end{proof}

\section{Global metric comparison bounds and proof of \texorpdfstring{\cref{thm:main}}{Theorem 2}} \label{sec:global-bounds}

The main goal of this section is to prove \cref{thm:main}.  The main work is to prove a two-point metric comparison (\cref{lem:two-point}), after which we will deduce Gromov-Hausdorff convergence in \cref{sec:proof-of-main}.

\subsection{A two-point asymptotic} \label{sec:two-point}

As usual for a point $p \in M$ and parameters $n,r$ we will write $\mathring{p}$ to be the $d_M$-nearest vertices of $\mathring{G}_M^n(r)$ to $p$.  The main purpose of this subsection is to show that two-point distances scale appropriately.

\begin{prop} \label{lem:two-point}
    Let $M$ be a connected compact $d$-dimensional Riemannian manifold of volume $1$ with $d \geq 2$.  For each fixed $\lambda_0 > \lambda_c$, $\beta \in (0,1), \eps > 0$ and $p \neq q \in M$ there are constants $c = c(M,\lambda_0,\beta,\eps,p,q) > 0$ and $n_0 = n_0(M,\lambda_0,\beta,\eps,p,q)$ so that {if $r \le n^{-\beta}$ and $\lambda \ge \lambda_0$} we have
    \[
      \Prob \left( 
      \frac{r d^\lam_r(\mathring{p},\mathring{q})}{\mu(\lam) d_M(p,q)} \in 
      [e^{-\epsilon},e^\epsilon]
      \right)
      \geq 1 - \exp(-c \log^2 n)
    \]
    for all $n \geq n_0$.
\end{prop}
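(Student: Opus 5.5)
The plan is to glue the patch estimates of \cref{lem:patches} along a $d_M$-geodesic. I will prove the upper bound by concatenating short graph paths along the geodesic, and the lower bound by cutting an arbitrary graph geodesic into pieces that each lie inside a single patch.

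First, a finite cover. Fix $\eps$ and apply \cref{lem:patches} with $\eps/2$ at every $x \in M$, obtaining radii $\delta'(x)$. By compactness, choose $x_1,\dots,x_k$ so that the balls $B_M(x_j,\delta'(x_j)/2)$ cover $M$, and let $\theta>0$ be a Lebesgue number of this cover. Set $D = d_M(p,q)$. On the intersection of the events $\mathcal{G}_{\delta'(x_j),\eps/2}(x_j)$ for $j\le k$, the event $\mathcal{B}$ of \cref{lem:all-near-giant}, and the uniqueness event of \cref{lem:giant-is-unique}, all failure probabilities are at most $k\exp(-c\log^2 n)$. Since $k$ depends only on $M,\lambda_0,\eps$, it suffices to show that this intersection forces the ratio in the statement into $[e^{-\eps},e^{\eps}]$.

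\emph{Upper bound.} Take a minimizing $d_M$-geodesic $\gamma$ from $p$ to $q$. Pick points $p=y_0,y_1,\dots,y_m=q$ along $\gamma$ with consecutive spacing about $\theta/4$, so that $m = O(D/\theta)$ does not depend on $n$. Replace each $y_i$ by $\mathring{y}_i$; by $\mathcal{B}$, $d_M(y_i,\mathring{y}_i)\le r\log^2 n$. Consecutive points $\mathring{y}_i,\mathring{y}_{i+1}$ then lie in a common patch $B_M(x_j,\delta'(x_j))$ and are at $d_M$-distance at least $\theta/8 \gg r\log^3 n$. The patch event gives
\[
r\,d_r^\lambda(\mathring{y}_i,\mathring{y}_{i+1})\le e^{\eps/2}\mu(\lambda)\bigl(d_M(y_i,y_{i+1})+2r\log^2 n\bigr).
\]
Summing over $i$ and using the triangle inequality for $d^\lambda_r$ yields $r\,d_r^\lambda(\mathring p,\mathring q)\le e^{\eps/2}\mu(\lambda)(D+2mr\log^2 n)\le e^{\eps}\mu(\lambda)D$ for $n$ large.

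\emph{Lower bound.} This is the step I expect to be the main obstacle, because a graph geodesic from $\mathring p$ to $\mathring q$ need not follow $\gamma$ and might wander far. Let $v_0=\mathring p,\dots,v_N=\mathring q$ be a graph geodesic. All $v_i$ lie in the giant $\mathring{G}^n_M(r)$, and consecutive vertices are within $d_M$-distance $r$. I will greedily cut the path at indices $0=i_0<i_1<\dots<i_m$, taking $i_{s+1}$ to be the first index with $d_M(v_{i_s},v_{i})\ge\theta/4$, or $N$ if no such index exists. Every vertex between $v_{i_s}$ and $v_{i_{s+1}}$ then lies within $\theta/4+r$ of $v_{i_s}$, so each segment sits inside one patch ball. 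Applying $\mathcal{G}$ to the endpoints of each segment, whose $d_M$-separation is at least $\theta/4 \ge r\log^3 n$, gives
\[
r(i_{s+1}-i_s)\ge e^{-\eps/2}\mu(\lambda)\,d_M(v_{i_s},v_{i_{s+1}}).
\]
The last segment may be short. It costs at most $\theta/4$ in $d_M$, which I will absorb by requiring $D\gg\theta$. If $D$ is small instead, I will simply choose $\theta$ (and hence the patches) small relative to $D$; this is legitimate since $p,q$ are fixed and $c,n_0$ may depend on them. Summing the segment bounds and applying the triangle inequality for $d_M$ gives
\[
r N\ge e^{-\eps/2}\mu(\lambda)\bigl(D-2r\log^2 n-\theta/4\bigr)\ge e^{-\eps}\mu(\lambda)D.
\]
The subtle point is that $\mathcal{G}$ bounds the graph distance $d_r^\lambda$ between segment endpoints, not the length of the segment. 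This is fine, because the length of a segment of a geodesic is at least the distance between its endpoints.
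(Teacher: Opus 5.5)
Your proposal is correct and follows essentially the same route as the paper: the upper bound chains the patch event along a $d_M$-geodesic using $\mathcal{B}$, and the lower bound cuts a graph geodesic from $\mathring p$ to $\mathring q$ into pieces whose endpoints lie in a common patch, applies the patch event to each piece, and sums using the triangle inequality for $d_M$. The only real difference is that you cut at the macroscopic scale $\theta/4$ rather than at the paper's mesoscopic scale $r\log^3 n$, so you need the extra (legitimate) step of shrinking the Lebesgue number relative to $d_M(p,q)$ to absorb the short final segment.
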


The strategy for proving \cref{lem:two-point} will be to consider a $d_M$-geodesic from $p$ to $q$ and apply the estimate from \cref{lem:patches} at well-chosen points near the geodesic.  We will use compactness to show that we need only consider finitely many points.  With this in mind, using \cref{lem:patches}, for each $x \in M$,
    choose $\delta_x > 0$ such that the event
    \[
    \mathcal{G}_{x}
    :=
    \left\{ \begin{array}{c}
      \forall p', q' \in \mathring{G}^n_M(r) \cap B_M(x,\delta_x)
      \mbox{ such that } d_M(p',q') \ge r \log^3 n, \\
      r d^\lam_r(p',q')/[\mu(\lam) d_M(p',q')] \in 
      [e^{-\epsilon/2},e^{\epsilon/2}]
      \end{array} \right\}
    \]
    satisfies $\Prob(\mathcal{G}_x) \geq  1 - \exp(- c \log^2 n)$.  By compactness of $M$, there is a collection of $x_1,\ldots,x_N \in M$ for $N < \infty$ such that $M \subset \bigcup_{j = 1}^N B_M(x_j,\delta_{x_j})\,.$  We define the open cover $$\mathcal{U} = \left\{ B_M(x_j,\delta_{x_j})\right\}_{j = 1}^N$$
    and note that $\mathcal{U}$ depends only on $\eps, \lambda_0$ and $M$.
    
    We also recall the event $$\mathcal{B} = \{ \text{for all }y \in M : d_M(y,\mathring{y}) \leq r \log^2 n\}\,.$$
    By \cref{lem:all-near-giant} we have $\P(\mathcal{B}^c) \leq \exp(-c \log^2 n).$
    We start with an upper bound.
    
    \begin{lem}\label{lem:two-point-UB}
        In the context of \cref{lem:two-point} there are  $c > 0,n_0$ so that  $\Prob\left( 
      \frac{r d^\lam_r(\mathring{p},\mathring{q})}{\mu(\lam) d_M(p,q)} \leq 
      e^\epsilon
      \right) \geq 1 -  \exp(-c \log^2 n)$ for all $n \geq n_0$.
    \end{lem}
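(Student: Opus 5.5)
The upper bound will come from chaining the patch estimates along a $d_M$-geodesic. Fix a unit-speed minimizing geodesic $\gamma:[0,D]\to M$ from $p$ to $q$, where $D=d_M(p,q)$. Let $\theta>0$ be a Lebesgue number of the cover $\mathcal{U}$. Subdivide $[0,D]$ into $m$ equal pieces with endpoints $t_0=0<t_1<\dots<t_m=D$, where $m=m(\eps,\theta,D)$ is chosen so that each step $D/m\le\theta/4$. Set $y_i=\gamma(t_i)$. Then $m$ is fixed, i.e.\ independent of $n$.

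We work on the event $\mathcal{B}\cap\bigcap_{j=1}^N\mathcal{G}_{x_j}$. By \cref{lem:all-near-giant}, the choice of $\mathcal{G}_{x_j}$, and a union bound over the fixed number $N$, this event has probability at least $1-(N+1)\exp(-c\log^2 n)$. On $\mathcal{B}$, each $\mathring{y}_i$ lies in $\mathring{G}^n_M(r)$ with $d_M(y_i,\mathring{y}_i)\le r\log^2 n$. Since $r\log^2 n\to 0$, for large $n$ consecutive points $\mathring{y}_{i},\mathring{y}_{i+1}$ lie within $d_M$-distance $\theta/2$ of each other. Hence both lie in a common ball $B_M(x_j,\delta_{x_j})$ of $\mathcal{U}$. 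Moreover
\[
d_M(\mathring{y}_i,\mathring{y}_{i+1})\ge D/m-2r\log^2 n\ge r\log^3 n
\]
for $n$ large, since $D/m$ is a fixed positive constant and $r\le n^{-\beta}$. So $\mathcal{G}_{x_j}$ applies and gives
\[
r\,d^\lam_r(\mathring{y}_i,\mathring{y}_{i+1})\le e^{\eps/2}\mu(\lam)\,d_M(\mathring{y}_i,\mathring{y}_{i+1})\le e^{\eps/2}\mu(\lam)\bigl(D/m+2r\log^2 n\bigr).
\]
Note that $\mathring{y}_0=\mathring{p}$ and $\mathring{y}_m=\mathring{q}$.

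Summing over $i$ with the triangle inequality for $d^\lam_r$ gives
\[
r\,d^\lam_r(\mathring{p},\mathring{q})\le e^{\eps/2}\mu(\lam)\bigl(D+2mr\log^2 n\bigr)\le e^{\eps}\mu(\lam)D
\]
for $n\ge n_0$, because $m$ is fixed and $r\log^2 n\to0$. Thus the only error terms are the $2m$ snapping errors $r\log^2 n$ from replacing $y_i$ by $\mathring{y}_i$, and they are negligible.

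The main point needing care is that the patch lemma requires both endpoints to be in the global giant and to share a single ball of $\mathcal{U}$. Two choices handle this. First, taking the steps of size of order the Lebesgue number keeps the number of steps bounded, which makes the union bound and the error summation trivial. Second, $\mathcal{B}$ puts each snapped point in the giant, and the bound $r\log^2 n\ll\theta$ keeps consecutive snapped points inside one ball of the cover. If $\mathring{y}_i$ is not unique, any choice works.
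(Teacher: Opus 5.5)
Your proof is correct and follows the paper's argument: you chain the patch events $\mathcal{G}_{x_j}$ along points of a $d_M$-geodesic, snap those points to the giant using $\mathcal{B}$, and sum with the triangle inequality. Your choice of steps of size at most $\theta/4$, with $\theta$ a Lebesgue number of $\mathcal{U}$, also ensures that the snapped points $\mathring{y}_i,\mathring{y}_{i+1}$ themselves (not just $y_i,y_{i+1}$) lie in a common ball of $\mathcal{U}$, a point the paper's write-up passes over.
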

    \begin{proof}
        Consider a geodesic in $M$ from $p$ to $q$ and let 
        $p=y_0, y_1,...,y_\ell =q$
        be a sequence of (distinct) points along the geodesic
        such that each consecutive pair of points
        lies in a common element of $\mathcal{U}$
        and we have
        \begin{equation} \label{eq:UB-geodesic}
           d_M(p,q) = \sum_{i=1}^\ell d_M(y_{i-1},y_i).
        \end{equation}
        We claim that for sufficiently large $n$  we have
    \begin{equation} \label{eq:UB-containment}
       \bigcap_{j=1}^N \mathcal{G}_{x_j} \cap \mathcal{B}
       \subset
       \{
       r d_r^\lam(\mathring{p}, \mathring{q})
       \le e^{\epsilon} \mu(\lam) d_M(p,q)
       \}.
    \end{equation}
    To begin with, the triangle inequality shows \begin{equation} \label{eq:2pt-UB-1}
         d_r^\lam(\mathring{p}, \mathring{q})
        \le
        \sum_{i=1}^\ell
        d_r^\lam(\mathring{y}_{i-1}, \mathring{y}_i)\,.
    \end{equation}
    On the event $\mathcal{B}$  the triangle
    inequality
    gives the lower bound
    \[
       d_M(\mathring{y}_{i-1}, \mathring{y}_i)
       \ge d_M(y_{i-1}, y_i) - 2r \log^2 n \geq r \log^3 n ,
    \]
    for 
    all $i=1,...,\ell$ when $n$ is sufficiently large.
    Therefore
    the event $\bigcap_{j=1}^N \mathcal{G}_{x_j}$
    gives us 
    \begin{equation}\label{eq:2pt-UB-2}
    r d_r^\lam(\mathring{y}_{i-1}, \mathring{y}_i)
    \le e^{\epsilon/2} \mu(\lambda)d_M(\mathring{y}_{i-1}, \mathring{y}_i).
    \end{equation}
    On the event $\mathcal{B}$ we have \begin{equation}\label{eq:2pt-UB-3}
        d_M(\mathring{y}_{i-1}, \mathring{y}_i) \leq d_M(y_{i-1},y_i) + 2 r \log^2 n\leq e^{\eps/2} d_M(y_{i-1},y_i)
    \end{equation}
    where the last inequality holds for $n$ large enough as a function of $\eps, \beta$ and $d_M(y_{i-1},y_i)$.  Combining \eqref{eq:2pt-UB-1}, \eqref{eq:2pt-UB-2} and \eqref{eq:2pt-UB-3}  with \eqref{eq:UB-geodesic} shows \begin{align*}
        r d_r^\lam(\mathring{p}, \mathring{q}) \leq e^{\eps} \mu(\lambda) d_M(p,q)\,.
    \end{align*}
    This shows \eqref{eq:UB-containment}.  Combining with \cref{lem:patches} and \cref{lem:all-near-giant} completes the proof.
    \end{proof}

We now show the analogous lower bound.

\begin{lem}\label{lem:two-point-LB}
      In the context of \cref{lem:two-point} there are $c > 0,n_0$ so that $\Prob\left( 
      \frac{r d^\lam_r(\mathring{p},\mathring{q})}{\mu(\lam) d_M(p,q)} \geq 
      e^{-\epsilon}
      \right) \geq 1 - \exp(-c \log^2 n)$ for $n \geq n_0.$ 
\end{lem}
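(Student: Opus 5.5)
To prove the lower bound we work on the event $\bigcap_{j=1}^N \mathcal{G}_{x_j} \cap \mathcal{B}$, which by \cref{lem:patches} and \cref{lem:all-near-giant} has probability at least $1-\exp(-c\log^2 n)$. On this event we show deterministically, for $n$ large, that $r d^\lam_r(\mathring{p},\mathring{q}) \geq e^{-\eps}\mu(\lam) d_M(p,q)$. Let $\alpha>0$ be a Lebesgue number of the cover $\mathcal{U}$, so any set of $d_M$-diameter less than $\alpha$ lies in a single ball $B_M(x_j,\delta_{x_j})$. Since $\alpha$ depends only on $\eps,\lambda_0,M$, it is a fixed positive constant.

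Take a graph geodesic $\mathring{p}=v_0,v_1,\dots,v_m=\mathring{q}$ in $G^n_M(r)$. Each $v_k$ lies in $\mathring{G}^n_M(r)$, since the path stays in the component of $\mathring{p}$. Consecutive vertices satisfy $d_M(v_{k-1},v_k)\le r$. Cut the path greedily into segments. Start at $w_0=v_0$, and let $w_{i+1}$ be the first vertex after $w_i$ with $d_M(w_i,w_{i+1})\ge \alpha/4$. Stop when no such vertex exists, and let the final segment end at $\mathring{q}$.

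Each segment, including its last step, stays within $d_M$-distance $\alpha/4+r<\alpha/2$ of its starting point $w_i$. So each segment lies in one ball of $\mathcal{U}$, and consecutive points $w_i,w_{i+1}$ lie in a common ball. Every non-final segment has $d_M(w_i,w_{i+1})\ge\alpha/4\ge r\log^3 n$ for $n$ large. The event $\mathcal{G}_{x_j}$ then gives
\[
r\, d^\lam_r(w_i,w_{i+1}) \ge e^{-\eps/2}\mu(\lam)\, d_M(w_i,w_{i+1}).
\]
Graph distances between successive $w_i$ are at most the lengths of the disjoint subpaths of a geodesic, so they add up to at most $d^\lam_r(\mathring{p},\mathring{q})$. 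This is the reverse of the triangle-inequality step in \cref{lem:two-point-UB}.

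The final segment from $w_\ell$ to $\mathring{q}$ needs separate care, since it may be short. If $d_M(w_\ell,\mathring{q})\ge r\log^3 n$, we apply $\mathcal{G}_{x_j}$ to it as well. Otherwise we simply discard it, losing at most $r\log^3 n$ in $d_M$. Summing and using the triangle inequality for $d_M$ gives
\[
r\, d^\lam_r(\mathring{p},\mathring{q}) \ge e^{-\eps/2}\mu(\lam)\bigl(d_M(\mathring{p},\mathring{q}) - r\log^3 n\bigr).
\]
On $\mathcal{B}$ we have $d_M(\mathring{p},\mathring{q})\ge d_M(p,q)-2r\log^2 n$. Since $d_M(p,q)>0$ is fixed and $r\le n^{-\beta}$, the right side is at least $e^{-\eps}\mu(\lam)d_M(p,q)$ for $n\ge n_0(p,q,\dots)$.

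The step that needs the most care is checking that each piece genuinely lies inside one patch $B_M(x_j,\delta_{x_j})$ with both endpoints in $\mathring{G}^n_M(r)$. This is what the Lebesgue-number choice and the bound $r\ll\alpha$ handle. The event $\mathcal{G}_{x_j}$ controls distances in the global graph metric $d^\lam_r$, not in a localized one, so it does not matter that the geodesic pieces might leave the ball. The number of pieces never needs to be bounded, because each piece contributes a multiplicative estimate.
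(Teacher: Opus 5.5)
Your proposal is correct and follows essentially the same route as the paper. You work on $\bigcap_{j}\mathcal{G}_{x_j}\cap\mathcal{B}$ and cut a graph geodesic from $\mathring{p}$ to $\mathring{q}$ into pieces whose endpoints share an element of $\mathcal{U}$ (via a Lebesgue number) and are $d_M$-separated by at least $r\log^3 n$; you then apply the patch estimate to each piece, sum, and finish with the triangle inequality and $\mathcal{B}$. The only differences are cosmetic: you cut at scale $\alpha/4$ rather than $r\log^3 n$, and you discard a possibly short final piece at a cost of $r\log^3 n$ in $d_M$, which cleanly handles a point that the paper's \cref{cl:find-geodesic} passes over quickly.
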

\begin{proof}
    We will show that for $n$ sufficiently large we have 
    \begin{equation}\label{eq:LB-containment}
        \bigcap_{j=1}^N \mathcal{G}_{x_j} \cap 
        \mathcal{B}
        \subset
        \{r d_r^\lam(\mathring{p}, \mathring{q})
        \ge e^{-\epsilon} \mu(\lam) d_M(p,q)\}.
    \end{equation}

    To begin with, we first claim that we may find well-separated points along a $d_r^\lambda$ geodesic from $\mathring{p}$ to $\mathring{q}$. 
    \begin{claim}\label{cl:find-geodesic}
         On the event $\mathcal{B}$, we may find points $\mathring{p} = v_0,v_1,...,v_K = \mathring{q}$ so that for all $i$ we have $v_{i-1}$ and $v_i$ are in the same element of $\mathcal{U}$, $ r \log^3 n \le d_M(v_{i-1}, v_i)$, and $ \sum_{k=1}^K d_r^\lam(v_{k-1}, v_k)
       = d_r^\lam(\mathring{p}, \mathring{q}).$
    \end{claim}
    \begin{proof}[Proof of \cref{cl:find-geodesic}]
        Let $\alpha$ be the Lebesgue covering number of the open cover $\mathcal{U}$, meaning that if $x,y \in M$ with $d_M(x,y) \leq \alpha$ then there is an element of $\mathcal{U}$ containing both $x$ and $y$.  For $n$ large enough, the event $\mathcal{B}$ ensures that $d_M(\mathring{p},\mathring{q}) \geq r \log^3 n$.  For $r$ so that $r \log^3n + r \leq \alpha /2$, we may find points $\mathring{p} =v_0,v_1,\ldots,v_K = \mathring{q}$ on a $d_r^\lambda$-geodesic from $\mathring{p}$ to $\mathring{q}$ so that $r \log^3 n \leq d_M(v_{i-1},v_i) \leq \alpha$.  By the definition of $\alpha$, this shows that for each $v_{i-1},v_i$ there is an element of $\mathcal{U}$ containing both.  By the definition of the geodesic, we also have $\sum_{k=1}^K d_r^\lam(v_{k-1}, v_k)
       = d_r^\lam(\mathring{p}, \mathring{q})$ completing the claim.
    \end{proof}
    Letting $v_0,\ldots,v_K$ as in \cref{cl:find-geodesic}, on the event  $\bigcap_{j=1}^N \mathcal{G}_{x_j} \cap 
       \mathcal{B}$ we may bound

    \begin{equation*}
        r d_r^\lam(v_{k-1},v_k) \geq e^{-\eps/2} \mu(\lambda) d_M(v_{k-1}, v_k)
    \end{equation*}
    and so  \begin{equation}\label{eq:LB-dr-geodesic}
        r d_r^\lam(\mathring{p}, \mathring{q}) \geq \sum_{k = 1}^K e^{-\eps/2} \mu(\lambda) d_M(v_{k-1},v_k) \geq e^{-\eps/2} \mu(\lambda) d_M(\mathring{p},\mathring{q})
    \end{equation}
    where the last inequality is the triangle inequality.  On the event $\mathcal{B}$ and $n$ sufficiently large we have \begin{equation}\label{eq:LB-apply-B}
         d_M(\mathring{p},\mathring{q}) \geq d_M(p,q) - 2 r \log^2 n  \geq e^{-\eps/2} d_M(p,q)\,.
    \end{equation}
    Combining \eqref{eq:LB-dr-geodesic} and \eqref{eq:LB-apply-B} shows that 
    $$rd_r^\lam(\mathring{p}, \mathring{q}) \geq e^{-\eps} \mu(\lambda) d_M(p,q)$$
    thus establishing \eqref{eq:LB-containment}.  Applying \cref{lem:patches} and \cref{lem:all-near-giant} completes the proof.    
\end{proof}

\begin{proof}[Proof of \cref{lem:two-point}]
    Combining \cref{lem:two-point-UB} and \cref{lem:two-point-LB} completes the proof.
\end{proof}
    
\subsection{Proof of \texorpdfstring{\cref{thm:main}}{Theorem 2}} \label{sec:proof-of-main}

We start with a tool for upper bounding the Gromov-Hausdorff distance between two metric spaces. Recall that an $\epsilon$-net in a metric space 
$(X,d_X)$ is a subset $X_\epsilon \subset X$
such that for any $x \in X$,
there exists some $x' \in X_\epsilon$
with $d_X(x,x') \le \epsilon$.
We use the following upper bound on Gromov-Hausdorff distance which appears, for instance, in \cite[Thm.~6.14(2)]{tuzhilin2020lectures} (see also Gromov's book \cite[Prop.~3.5b]{Gromov07}):

\begin{thm} \label{GH-convergence}
    Suppose that $(X,d_X)$ and $(Y,d_Y)$ are metric spaces
    and let $\epsilon > 0$.
    Suppose that $X_{\epsilon} \subset X$
    and $Y_{\epsilon} \subset Y$
    are $\epsilon$-nets and that there
    exists a surjective function $f:X_\epsilon \to Y_\epsilon$
    such that
    \[
        \sup_{x, x' \in X_\epsilon}
        |d_X(x,x') - d_Y(f(x),f(x'))| \le \epsilon.
    \]
    Then
    $d_{\mathrm{GH}}(X,Y) \le 2\epsilon$.
\end{thm}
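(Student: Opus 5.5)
We plan to prove this by building an explicit metric on the disjoint union $X \sqcup Y$ that extends $d_X$ and $d_Y$, and then bounding the Hausdorff distance between the two copies inside it. Recall that $d_{\GH}(X,Y)$ is the infimum, over all such admissible metrics, of the Hausdorff distance between $X$ and $Y$. Set $c := \epsilon/2$. For $x \in X$ and $y \in Y$ define
\[
D(x,y) = D(y,x) := \inf_{a \in X_\epsilon} \bigl( d_X(x,a) + c + d_Y(f(a),y) \bigr).
\]
We keep $D = d_X$ on $X \times X$ and $D = d_Y$ on $Y \times Y$. The cross distances are at least $c > 0$, so $D$ is positive off the diagonal, and $D$ is symmetric by definition.

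The main step is the triangle inequality. Two of the cases are immediate from the infimum. These are $D(x_1,y) \le d_X(x_1,x_2) + D(x_2,y)$ and $D(x,y_1) \le D(x,y_2) + d_Y(y_2,y_1)$, and both follow by applying the triangle inequality inside the infimum. The substantive cases pass through the other space.

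For $x_1, x_2 \in X$ and $y \in Y$, take near-optimal $a, b \in X_\epsilon$ in the definitions of $D(x_1,y)$ and $D(y,x_2)$. Then
\[
D(x_1,y) + D(y,x_2) \gtrsim d_X(x_1,a) + \bigl(d_Y(f(a),f(b)) + 2c\bigr) + d_X(b,x_2).
\]
The hypothesis gives $d_Y(f(a),f(b)) + \epsilon \ge d_X(a,b)$, so the right-hand side is at least $d_X(x_1,a) + d_X(a,b) + d_X(b,x_2) \ge d_X(x_1,x_2)$.

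The symmetric case $y_1, x, y_2$ works the same way. There we use $d_X(a,b) + \epsilon \ge d_Y(f(a),f(b))$ to get
\[
D(y_1,x) + D(x,y_2) \ge d_Y(y_1,f(a)) + d_Y(f(a),f(b)) + d_Y(f(b),y_2) \ge d_Y(y_1,y_2).
\]
This is exactly where the distortion bound $\epsilon = 2c$ is needed, and it is the only delicate point.

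It remains to bound the Hausdorff distance. Given $x \in X$, choose $a \in X_\epsilon$ with $d_X(x,a) \le \epsilon$. Then $D(x,f(a)) \le \epsilon + c = 3\epsilon/2$. Given $y \in Y$, choose $y' \in Y_\epsilon$ with $d_Y(y,y') \le \epsilon$, and use surjectivity of $f$ to write $y' = f(a)$ for some $a \in X_\epsilon$. Then $D(a,y) \le c + \epsilon = 3\epsilon/2$. Hence the Hausdorff distance between $X$ and $Y$ in $(X \sqcup Y, D)$ is at most $3\epsilon/2$, and therefore $d_{\GH}(X,Y) \le 3\epsilon/2 \le 2\epsilon$.
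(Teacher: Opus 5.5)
Your proof is correct. The paper does not prove this statement itself. It quotes it from Tuzhilin's lecture notes (Thm.~6.14(2)) and Gromov's book (Prop.~3.5b), so your argument is a self-contained alternative to a citation rather than a variant of an in-paper proof.

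What you build is the standard gluing metric attached to the relation $\{(a,f(a)) : a \in X_\eps\}$, with gap $c \ge \frac12\mathrm{dis}(f)$. The same bound can be read off from the correspondence
$R = \{(x,f(a)) : d_X(x,a) \le \eps\} \cup \{(a,y) : d_Y(y,f(a)) \le \eps\}$
between $X$ and $Y$. This correspondence has distortion at most $3\eps$, so $d_{\GH}(X,Y) \le \frac12\mathrm{dis}(R) \le 3\eps/2$.

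Your bound $3\eps/2$ is sharper than the stated $2\eps$. It also shows why the direct construction is needed. Chaining the triangle inequality for $d_{\GH}$ through the nets only gives $d_{\GH}(X,X_\eps) + d_{\GH}(X_\eps,Y_\eps) + d_{\GH}(Y_\eps,Y) \le \eps + \eps/2 + \eps = 5\eps/2$, which falls short of the stated $2\eps$. That route pays the net error on both sides at once. Your metric moves off the net on only one side of each cross distance.

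The citation buys brevity. Your construction buys a self-contained proof with a better constant. The paper's application does not need the better constant: in the proof of the main theorem $\eps$ is arbitrary, so any $O(\eps)$ bound suffices.

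Two cosmetic points:
\begin{enumerate}
\item $D$ is finite because $X_\eps \neq \emptyset$ whenever $X \neq \emptyset$.
\item You can drop the ``near-optimal'' approximation. Your lower bounds in the two substantive triangle-inequality cases hold for every pair $a,b \in X_\eps$ before taking the infimum.
\end{enumerate}
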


We will prove \cref{thm:main} by applying \cref{GH-convergence}.  To begin with, fix $\eps > 0$ and let $X = X_\eps \subset (M,\mu(\lambda)d_M)$ be a finite $(\eps/5)$-net in the metric $\mu(\lambda_0) d_M$; note that $X$ depends only on $M,\eps$ and $\lambda_0$. Recall that by \cref{lem:limit-time-constant} we have that $X_\eps$ is an $(\eps/5)$-net in $\mu(\lambda)d_M$ for all $\lambda \geq \lambda_0$.  As usual, for a point
    $x \in M$ denote by $\mathring{x} = \mathring{x}(r)$
    the (random) vertex of $\mathring{G}^n_M(r)$
    which is $d_M$-nearest to $x$.  Define the function $f_r:X \to \mathring{G}_M^n(r)$ by $f_r(x) := \mathring{x}.$
    We will first use \cref{lem:two-point} to show that $f_r$ has small distortion: 
    \begin{lem}\label{lem:f-bounded-distortion}
         Let $M$ be a connected compact $d$-dimensional Riemannian manifold of volume $1$ with $d \geq 2$.  For each fixed $\lambda_0 > \lambda_c$, $\beta \in (0,1), \eps > 0$, let $X$ be a finite $(\eps/5)$-net of $M$.  Then there are constants $c = c(M,\lambda_0,\beta,\eps,X) > 0$ and $n_0 = n_0(M,\lambda_0,\beta,\eps,X)$ so that {if $r \le n^{-\beta}$ and $\lambda \ge \lambda_0$} we have
        $$\Prob\left(\sup_{x, x' \in X}
        |\mu(\lam)d_M(x,x') - r d^\lam_r(f_r(x),f_r(x'))| > \epsilon/2\right) \leq  \exp(-c \log^2 n) $$
        for $n \geq n_0$. 
    \end{lem}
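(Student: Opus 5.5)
The plan is to apply \cref{lem:two-point} to each of the finitely many pairs in $X$ and take a union bound. Pairs with $x = x'$ need no work: then $f_r(x) = f_r(x')$ and both distances vanish. So fix a pair $x \neq x'$ in $X$. We apply \cref{lem:two-point} with $p = x$, $q = x'$ and accuracy parameter $\eps' > 0$, to be chosen shortly. This gives $c_{x,x'} > 0$ and $n_{x,x'}$ such that, for $n \geq n_{x,x'}$, the ratio $r d^\lam_r(\mathring{x},\mathring{x}')/(\mu(\lam) d_M(x,x'))$ lies in $[e^{-\eps'},e^{\eps'}]$ outside an event of probability at most $\exp(-c_{x,x'}\log^2 n)$. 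By definition $f_r(x) = \mathring{x}$, so on this good event
\[
|\mu(\lam)d_M(x,x') - r d^\lam_r(f_r(x),f_r(x'))| \le (e^{\eps'}-1)\,\mu(\lam)\, d_M(x,x').
\]

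To turn this relative error into an absolute error of at most $\eps/2$, I need a bound on $\mu(\lam) d_M(x,x')$ that is uniform in $\lam \geq \lambda_0$. By \cref{lem:limit-time-constant}, $\mu$ is non-increasing, so $\mu(\lam) \le \mu(\lambda_0)$. Also $d_M(x,x') \le \mathrm{diam}(M)$, which is finite by compactness. Hence the error is at most $(e^{\eps'}-1)\mu(\lambda_0)\mathrm{diam}(M)$. I choose $\eps' = \eps'(M,\lambda_0,\eps)$ small enough that this quantity is at most $\eps/2$. The constants from \cref{lem:two-point} then depend on $M,\lambda_0,\beta,\eps$ and the pair, and therefore on $X$.

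Finally I take a union bound over the at most $|X|^2$ pairs. This shows the supremum exceeds $\eps/2$ with probability at most $|X|^2\exp(-\min_{x,x'} c_{x,x'}\log^2 n)$. Since $X$ is finite and fixed, this is at most $\exp(-c\log^2 n)$ for $n \geq n_0$, where $c = \tfrac12\min c_{x,x'}$ and $n_0$ is taken larger than every $n_{x,x'}$. There is no real obstacle here. The only points needing care are the uniformity in $\lam$, which is supplied by monotonicity of $\mu$, and the fact that \cref{lem:two-point} requires $p \neq q$, which is why the diagonal pairs were handled separately.
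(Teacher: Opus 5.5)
Your proposal is correct and matches the paper's proof: you apply \cref{lem:two-point} with a small accuracy parameter chosen so that $(e^{\eps'}-1)\mu(\lambda_0)\,\mathrm{diam}(M) \le \eps/2$, using monotonicity of $\mu$ for uniformity in $\lam$, and then take a union bound over the finitely many pairs in $X$. Your separate treatment of the diagonal pairs $x = x'$ is a small tidying the paper leaves implicit.
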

    \begin{proof}
         Choose $\gamma > 0$ sufficiently small so that 
    $\max(1 - e^{-\gamma}, e^\gamma - 1) \mu(\lam_0) \mathrm{diam}(M)
    \le \epsilon/2$.  We then see that, for each $x,z \in X$
    we have
    \begin{align*}
        \left\{
        \frac{r d^\lam_r(\mathring{x},\mathring{z})}{\mu(\lam) d_M(x,z)} \in 
        [e^{-\gamma},e^\gamma]
        \right\} 
        \subset 
        \left\{
        |\mu(\lam) d_M(x,z) - r d_r^\lam(f_r(x), f_r(z))|
        \le \epsilon/2
        \right\}.
    \end{align*}
    Therefore we have that
    \begin{align*}
        \Prob\left(\sup_{x, x' \in X}
        |\mu(\lam)d_M(x,x') - r d^\lam_r(f_r(x),f_r(x'))| > \epsilon/2\right) 
        &\le
        \sum_{x, x' \in X}
        \Prob\left(
        |\mu(\lam)d_M(x,x') - r d^\lam_r(f_r(x),f_r(x'))| > \epsilon/2\right) \\
        &\le
        \sum_{x,x' \in X}
        \Prob\left(
        \frac{r d^\lam_r(\mathring{x},\mathring{x}')}{\mu(\lam) d_M(x,x')} \notin 
        [e^{-\gamma},e^\gamma]
        \right)\,.
    \end{align*}
    Applying \cref{lem:two-point} completes the proof.
    \end{proof}

    We next show that $f_r$ is typically an $(\eps/2)$-net in $(\mathring{G}_M^n(r),rd_r^\lambda).$

    \begin{lem}\label{lem:f-gives-eps-net}
        Let $M$ be a connected compact $d$-dimensional Riemannian manifold with $d \geq 2$.  For each fixed $\lambda_0 > \lambda_c$, $\beta \in (0,1), \eps > 0$, let $X$ be a finite $(\eps/5)$-net of $M$.  Then there are constants $c = c(M,\lambda_0,\beta,\eps,X) > 0$ and $n_0 = n_0(M,\lambda_0,\beta,\eps,X)$ so that {if $r \le n^{-\beta}$ and $\lambda \ge \lambda_0$} we have $$\P(f_r(X) \text{ is not an } (\eps/2)\text{-net in }(\mathring{G}_M^n(r),rd_r^\lam ) ) \leq  \exp(-c \log^2n )$$
        for all $n \geq n_0.$
    \end{lem}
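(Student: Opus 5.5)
We need to show that every vertex $v \in \mathring{G}_M^n(r)$ lies within $rd_r^\lambda$-distance $\eps/2$ of some $\mathring{x}$ with $x \in X$. The plan is to work on the intersection of three events:
\begin{itemize}
\item the event $\bigcap_{j=1}^N \mathcal{G}_{x_j}$ from \cref{sec:two-point}, with the finite cover $\mathcal{U}$ built at a smaller tolerance $\eps'$ in place of $\eps$;
\item the event $\mathcal{B}$ of \cref{lem:all-near-giant};
\item the event of \cref{lem:f-bounded-distortion}.
\end{itemize}
Each fails with probability at most $\exp(-c\log^2 n)$, and there are finitely many of them, so a union bound controls the total failure probability. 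It then remains to show deterministically that on this intersection $f_r(X)$ is an $(\eps/2)$-net.

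Fix $v \in \mathring{G}_M^n(r)$. Since $X$ is an $(\eps/5)$-net for $\mu(\lambda)d_M$, pick $x \in X$ with $\mu(\lambda)d_M(v,x)\le \eps/5$. On $\mathcal{B}$ we have $d_M(x,\mathring{x})\le r\log^2 n$, hence $\mu(\lambda) d_M(v,\mathring{x}) \le \eps/5 + o(1)$. The key step is to convert this $d_M$-closeness between two vertices of the giant into $rd_r^\lambda$-closeness. To do so I would reuse the gluing argument of \cref{lem:two-point-UB}, applied to the pair of vertices $v,\mathring{x}$ rather than to $\mathring{p},\mathring{q}$ for fixed points.

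Concretely, take a $d_M$-geodesic from $v$ to $\mathring{x}$ and choose points $y_0=v,\dots,y_\ell=\mathring{x}$ on it. Consecutive points should lie in a common element of $\mathcal{U}$ and be separated by at most the Lebesgue number of $\mathcal{U}$. The number of points $\ell$ should be bounded uniformly, say by a constant depending only on $\mathcal{U}$ and $\mathrm{diam}(M)$. Then:
\begin{itemize}
\item On $\mathcal{B}$, each $y_i$ has $\mathring{y}_i$ within $r\log^2 n$, and we take $\mathring{y}_0 = v$ and $\mathring{y}_\ell=\mathring{x}$.
\item For pairs with $d_M(\mathring{y}_{i-1},\mathring{y}_i)\ge r\log^3 n$, the events $\mathcal{G}_{x_j}$ give $rd_r^\lambda(\mathring{y}_{i-1},\mathring{y}_i)\le e^{\eps'}\mu(\lambda)(d_M(y_{i-1},y_i)+2r\log^2 n)$.
\end{itemize}
Summing over $i$ gives $rd_r^\lambda(v,\mathring{x}) \le e^{\eps'}\mu(\lambda)d_M(v,\mathring{x}) + O(\ell r\log^2 n) \le \eps/2$ once $\eps'$ is small and $n$ is large.

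The main obstacle is that $v$ and $\mathring{x}$ may be very close in $d_M$: closer than $r\log^3 n$, or closer than the Lebesgue number, or such that the spacing between the $y_i$ cannot be kept above $r\log^3 n$. In that case the patch events $\mathcal{G}_{x_j}$ do not apply directly. Two fixes are available:
\begin{itemize}
\item Insert an intermediate vertex $w$ of the giant at $d_M$-distance about $r\log^3 n$ from both endpoints, which exists by $\mathcal{B}$ applied to a suitable point of $M$. The patch bound then applies to the pairs $(v,w)$ and $(w,\mathring{x})$, giving $rd_r^\lambda$-distance $O(\mu(\lambda) r\log^3 n)=o(1)$.
\item Alternatively, handle short distances via the event $\cC_3$ machinery (\cref{P(D)-large}, through \cref{non-uniform-Antal-Pisztora}), which bounds graph distances between nearby connected points by $\log^2 n$.
\end{itemize}
Both routes cost only $o(1)$ after multiplying by $r$, since $r\le n^{-\beta}$. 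Collecting the failure probabilities, which are finitely many, each at most $\exp(-c\log^2 n)$, completes the proof.
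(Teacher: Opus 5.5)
Your proof is correct. It is organized differently from the paper's, although the ingredients are the same: patch events, the event $\mathcal{B}$, and an intermediate vertex for short distances.

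The paper refines $X$ to a finite $X'\supset X$ whose balls satisfy $\mu(\lambda_0)\delta_z\le \eps/5$. It uses the patch events $\mathcal{G}'_z$ only for the short hop from an arbitrary vertex $y$ to $\mathring z$. It then bridges from $\mathring z$ to $f_r(x)$ using the fixed-point two-point events $T_{x,z}$ from \cref{lem:two-point}, over the finitely many pairs in $X'$.

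You instead make the chaining argument of \cref{lem:two-point-UB} uniform in one random endpoint. This works because $\bigcap_j\mathcal{G}_{x_j}$ and $\mathcal{B}$ are global events that do not refer to particular points, so your deterministic inclusion holds simultaneously for every vertex $v$ of the giant. No refined net and no $T_{x,y}$ are needed.

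Your way of producing $w$ is also more direct than the paper's. You take $m\in M$ with $d_M(v,m)=3r\log^3 n$, which exists by connectedness, and set $w=\mathring m$ via $\mathcal{B}$. The paper instead argues by contradiction, using the $\Omega(1)$ diameter of the giant forced by $\mathcal{B}\cap T_{x,y}$.

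The price is that you must supply the uniformity details that \cref{lem:two-point-UB} glosses over for fixed $p,q$. Let $a$ be the Lebesgue number of $\mathcal U$.
\begin{itemize}
\item If $d_M(v,\mathring x)>a/2$, space the $y_i$ with gaps in $[a/4,a/2]$. Then $\ell=O(\mathrm{diam}(M)/a)$, the perturbed gaps $d_M(\mathring y_{i-1},\mathring y_i)$ stay above $r\log^3 n$, and consecutive $\mathring y_i$ still share an element of $\mathcal U$.
\item If $r\log^3 n\le d_M(v,\mathring x)\le a/2$, take $\ell=1$.
\item If $d_M(v,\mathring x)<r\log^3 n$, use $w$.
\end{itemize}

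Two small remarks. First, the event of \cref{lem:f-bounded-distortion} plays no role in your argument and can be dropped. Second, your alternative fix via $\cC_3$ does not apply as stated. $\cC_3$ only controls pairs at Euclidean distance at most $\gamma r\log^2 n\ll r\log^3 n$ that are already connected inside $G_S$. Your first fix suffices, so this does not affect correctness.
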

    \begin{proof}
        Let $\eta > 0$ be chosen as in \cref{sec:setting-up-processes} and note we have $2 e^\eta (\eps/5) \leq \eps/2$. We may then use \cref{lem:patches} along with compactness of $M$ to find a refinement $X' \supset X$ so that for each $x \in X'$ we have $\delta_x > 0$ so that $\{B_M(x,\delta_x)\}_{x \in X'}$ covers $M$ and so that if we define the event
        \[
            \mathcal{G}'_{x} :=
            \left\{
                \begin{array}{c}
                \forall p, q \in \mathring{G}^n_M(r) \cap B_M(x,\delta_x)
                \mbox{ such that } d_M(p,q) \ge r\log^3 n, \\
                r d^\lam_r(p,q)/[\mu(\lam) d_M(p,q)] \in 
                [e^{-\eta},e^{\eta}]
                \end{array}
            \right\}
        \]
        then $\P(\mathcal{G}'_{x}) \geq 1 - \exp(-c \log^2 n)\,.$   For a pair $x,y \in X'$ we also define $$T_{x,y} = \left\{ \frac{r d_r^\lam(\mathring{x},\mathring{y})}{\mu(\lam) d_M(x,y)} \in [e^{-\eta}, e^\eta] \right\}\,.$$  By \cref{lem:two-point}, we have that $\P(T_{x,y}) \geq 1 -  \exp(-c \log^2 n)\,.$  Thus, using \cref{lem:all-near-giant}, it is sufficient to show the inclusion \begin{equation}
            \bigcap_{x \in X'}\mathcal{G}'_{x}\cap \mathcal{B} \cap \bigcap_{x,y \in X'} T_{x,y} \subset \{f_r(X) \text{ is a }(\eps/2)\text{-net in }(\mathring{G}_M^n(r), r d_r^\lam)\}\,.
        \end{equation}
        To see this, let $y \in \mathring{G}_M^n(r)$ be arbitrary.  By requiring all $\delta_x$ satisfy $\delta_x \leq \eps / (5 \mu(\lambda_0))$, we see that there is some $z \in X'$ so that $y \in B_M(z,\delta_z)$ and $\mu(\lam) d_M(z,y) \leq \eps/5$. 

        \begin{claim}\label{claim:net-dist-UB}
            On $\bigcap_{x \in X'}\mathcal{G}'_{x}\cap \mathcal{B} \cap \bigcap_{x,y \in X'} T_{x,y}$ we have $r d_r^\lam(\mathring{z},y) \leq e^\eta \mu(\lambda) \max\{d_M(\mathring{z},y),9 r \log^{3}n  \}.$
        \end{claim}
        \begin{proof}[Proof of \cref{claim:net-dist-UB}]
            To see this, first note that 
            if $d_M(\mathring{z},y) > r \log^3 n$,
            then by $\mathcal{G}'_{z}$ we have 
            $r d^\lam_r(\mathring{z}, y) 
            \le  e^{\eta} \mu(\lambda) d_M(\mathring{z}, y)$.
            If instead we have $d_M(\mathring{z},y) \le r \log^3 n$
            and there exists some $w \in \mathring{G}^n_M(r)$
            with $d_M(\mathring{z},w) \in [3r\log^3 n, 4r\log^3 n]$
            then we have $d_M(y,w) \ge 2r\log^3 n$.  Since $y,w \in \mathring{G}_M^n(r)$ with $2 r\log^3 n \leq d_M(y,w) \leq 5 r \log^3 n,$ for $n$ large enough we have that there is some $q \in X'$ with $y,w \in B_M(q,\delta_q)$ implying that
            $$r d_r^\lam(y,w) \leq e^\eta  \mu(\lambda) d_M(y,w) \leq 5 e^\eta  \mu(\lambda) r \log^3 n\,.$$ 
            Since we also have $rd_r^\lambda(\mathring{z},w) \leq e^{\eta} \mu(\lambda) d_M(\mathring{z},w) \leq 4 e^\eta \mu(\lambda) r \log^3 n,$ we then see that in this case we have $rd_r^\lam(\mathring{z},y) \leq 9 e^{\eta} \mu(\lambda) r \log^3 n$.
            Thus, the only remaining case is when $d_M(\mathring{z},y) \leq r \log^3 n$ and there is no point
            $w \in \mathring{G}^n_M(r)$ with
            $d_M(\mathring{z},w) \in [3r\log^3 n, 4r\log^3 n]$.  In this case, we have that $\mathring{G}^n_M(r)$ is contained in
            a $d_M$-ball of radius $3r \log^3 n$ about
            $\mathring{z}$.  However, for each pair of distinct points $x,y \in X'$, the event $\mathcal{B} \cap T_{x,y}$ shows that  $\mathring{G}^n_M(r)$ has $d_M$-diameter $\Omega(1)$, thus showing that this final case does not occur. 
            This completes the proof of the claim.
        \end{proof}

        To complete the proof of the lemma, note that there exists $x \in X$ so that $\mu(\lam) d_M(x,z) \leq \eps/5$ and that $rd_r^\lam(f_r(x),\mathring{z}) \leq e^\eta \mu(\lambda)  d_M(x,z)$ by the event $T_{x,z}$. This implies \begin{align*}
            rd_r^\lam(f_r(x),y) \leq rd_r^\lam(f_r(x),\mathring{z}(r)) + rd_r^\lam(\mathring{z},y) \leq e^{\eta}\left(\eps/5 + \eps/4 + 9 r\log^3 n \right) \leq \eps/2
        \end{align*}
        where the second inequality is by \cref{claim:net-dist-UB} along with the event $\mathcal{B}$ showing $\mu(\lambda) d_M(\mathring{z},y) \leq \eps/4$; the last inequality follows from taking $n$ sufficiently large.
    \end{proof}

\begin{proof}[Proof of \cref{thm:main}]
    Define $f_r$ as above and $X$ to be a finite $\eps/(5 \mu(\lambda_0))$-net in $(M,d_M)$ and note that $\mu(\lambda_0) \geq \mu(\lambda)$ by \cref{lem:limit-time-constant}.   \cref{lem:f-bounded-distortion} shows $f_r$ has distortion at most $\eps/2$ into $(\mathring{G}_M^n(r),rd_r^\lam)$ with probability at least $1 - \exp(-\Omega(\log^2n)).$  Defining $Y = f_r(X)$ we have that $Y$ is an $\eps/2$-net in $(\mathring{G}_M^n(r),rd_r^\lam)$ with probability at least $1 - \exp(-\Omega(\log^2 n))$ by \cref{lem:f-gives-eps-net}.  Applying \cref{GH-convergence} completes the proof.
\end{proof}

\section{Proof of \texorpdfstring{\cref{thm:sparse}}{Theorem 3}} \label{sec:denser}

Rather than considering average degree $\Delta \in [n^{1-\beta},\alpha n]$ for some $\alpha > 0$ and $\beta \in (0,1)$ we may consider $r \in [n^{-\beta'},\alpha']$
for $\beta' \in (0,1/d)$ and $\alpha' > 0$.  It will be more convenient for us to work with these bounds on $r$ directly rather than $\Delta$.  We first prove connectivity:

\begin{lem}\label{lem:connected}
	Let $M$ be a connected compact $d$-dimensional Riemannian manifold of volume $1$ and fix $\beta \in (0,1/d)$. There are constants $n_0 = n_0(M,\beta)$ and $c = c(M,\beta)$ so that for all $n \geq n_0$ and $r \geq n^{-\beta}$ $$\P(G_M^n(r) \text{ is connected}) \geq 1 - \exp(- c n^{1 - d\beta})\,.$$
\end{lem}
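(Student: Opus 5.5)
We will prove connectivity by a standard covering argument: show that with overwhelming probability every small cell of a fine partition of $M$ contains a Poisson point, and that points in adjacent cells are within distance $r$. First, since increasing $r$ only adds edges, $\{G_M^n(r)\text{ connected}\}$ is monotone increasing in $r$. We may therefore assume $r = n^{-\beta}$; for $r \ge \mathrm{diam}(M)$ the graph is trivially connected.

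\emph{Step 1 (a net).} Set $s = r/4$ and choose a maximal $s$-separated set $\{w_1,\dots,w_m\}\subset M$. Maximality makes it an $s$-net of $M$. The balls $B_M(w_i,s/2)$ are disjoint, and by compactness of $M$ together with \cref{lem:almostflat} (finitely many charts with $\eta=1$, say) each has volume at least $c_M s^d$ once $s$ is below some $s_0(M)$. Since $\mathrm{Vol}(M)=1$, it follows that $m \le C_M s^{-d} = O(r^{-d})$. For the same reason each ball $B_M(w_i, s/2)$ has $dM$-measure at least $c_M s^d$.

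\emph{Step 2 (every ball occupied).} The number of Poisson points in $B_M(w_i,s/2)$ is Poisson with mean at least $c_M n s^d = c_M 4^{-d} n^{1-d\beta}$. A union bound gives
\[
\P\bigl(\exists i:\ B_M(w_i,s/2)\text{ contains no point}\bigr)\le C_M r^{-d}\exp\bigl(-c n^{1-d\beta}\bigr)\le \exp\bigl(-c' n^{1-d\beta}\bigr),
\]
because $r^{-d} = n^{d\beta}$ is polynomial and is absorbed into the exponent for $n\ge n_0$ (this uses $\beta<1/d$).

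\emph{Step 3 (connectivity on the good event).} Suppose every $B_M(w_i,s/2)$ contains a vertex $v_i$. Every vertex $v$ lies within $s$ of some $w_i$, so $d_M(v,v_i)\le 3s/2 \le r$, and $v$ is adjacent to $v_i$. It remains to show that the $v_i$ form a connected subgraph. Define an auxiliary graph on the $w_i$, joining $i$ and $j$ when $d_M(w_i,w_j)\le 2s$; in that case $d_M(v_i,v_j)\le 3s<r$, so $v_i\sim v_j$ in $G_M^n(r)$. The auxiliary graph is connected. Indeed, $M$ is connected and the balls $B_M(w_i,s)$ cover it, so the open cover by these balls cannot be split into two nonempty families with disjoint unions. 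Hence for any partition of the indices there exist $i,j$ on opposite sides with $B_M(w_i,s)\cap B_M(w_j,s)\neq\emptyset$, which gives $d_M(w_i,w_j)<2s$. Therefore $G_M^n(r)$ is connected.

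The only delicate step is the uniform volume lower bound for small balls in Step 1. \cref{lem:almostflat} gives it locally, with $\delta_0$ depending on the center. We make it uniform by covering $M$ with finitely many charts and using that a ball $B_M(y,\rho)$ with $y\in B_M(z,\delta_z/2)$ and $\rho<\delta_z/2$ lies inside the chart around $z$, where the metric and measure distortions are bounded by $e^{\eta}$.
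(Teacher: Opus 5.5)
Your proof is correct, but it takes a different route from the paper. The paper argues pair by pair. For two Poisson points $x,y$ it places $K=O(r^{-1})$ points along a $d_M$-geodesic with spacing $(1-2\gamma)r$. If every ball $B_M(x_j,\gamma r)$ is occupied, the occupying vertices form a chain from $x$ to $y$, and the Mecke formula (\cref{Palm theory}) then union-bounds over all pairs of random vertices.

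You instead fix a deterministic $r/4$-net and union-bound over its $O(r^{-d})$ cells. Connectivity of the skeleton then comes from a purely topological fact: a finite cover of a connected space has a connected intersection graph. This avoids both Palm theory and geodesics. Your argument therefore works verbatim for any connected compact metric measure space with a uniform lower bound on the measure of small balls. Two further details make your version cleaner: the reduction to $r=n^{-\beta}$ by monotonicity, and the explicit finite-chart argument for the uniform volume bound, which fills in a point the paper passes over quickly.

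What the paper's geodesic-chain construction buys is quantitative information. Occupancy of the chain of balls gives $d_r^\lambda(x,y)\le K\approx d_M(x,y)/((1-O(\gamma))r)$. This is exactly the estimate reused in the upper bound of \cref{prop:two-point-sparse} (see \eqref{eq:K-def}--\eqref{eq:d-LB-sparse}) and again in the net argument in the proof of \cref{thm:sparse}. Your auxiliary-graph path yields connectivity, but it gives no comparison between graph distance and $d_M$ with a constant near $1$.
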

\begin{proof}
	We will use \cref{Palm theory}.  Set $\gamma = 1/10$.
	For two points $x,y \in M$, consider a sequence of points $x = x_0,x_1,\ldots,x_K = y$ along a $d_M$-geodesic from $x$ to $y$ so that $d_M(x_{j-1},x_j) \leq (1 - 2\gamma) r$.   Further, we may choose this sequence of points so that $K \leq (1 - 2\gamma)^{-1} r^{-1} \mathrm{diam}(M) + 1 \leq O_M(n^\beta)$.  
	Note that for $x,y \in G_M^n(r)$ we have \begin{align*}
		\{x  \xleftrightarrow{G_M^n(r)} y\} \supset \bigcap_{j = 1}^{K-1} \{G_M^n(r) \cap B_M(x_j, \gamma r) \neq \emptyset \}\,.
	\end{align*}
	Note that for each $z \in M$ we have $$\P(G_M^n(r) \cap B_M(x_j, \gamma r) = \emptyset) = \exp(- n \mathrm{Vol}_M(B_M(x_j, \gamma r))) \leq \exp( - \Omega_M( n \gamma^d r^d)) = \exp(-\Omega_M(n^{1 - d\beta}))\,.$$
	\cref{Palm theory} then shows \begin{equation*}
		\P(G_M^n(r) \text{ is not connected}) \leq O_M( n^\beta \exp(- \Omega(n^{1 - d\beta}))) \leq \exp(- \Omega_M(n^{1 - d\beta}))\,. \qedhere 
	\end{equation*}
\end{proof}

We first prove a two-point estimate for the distances.
\begin{prop}\label{prop:two-point-sparse}
	Let $M$ be a connected compact $d$-dimensional Riemannian manifold of volume $1$ with $d \geq 2$.  For each fixed $\eps > 0$ and $\beta \in (0,1/d)$ there are $\alpha =  \alpha(M,\beta,\eps) > 0, c = c(M,\beta,\eps) > 0$  and $n_0 = n_0(M,\beta,\eps)$ so that the following holds.  For $n \geq n_0$ and $r \in [n^{-\beta},\alpha]$, let $G_M^n(r)$ be the  Poisson random geometric graph on $M$.  Then for all $p,q \in M$ with $d_M(p,q) \geq \eps/100$ we have 
	\begin{equation*}
		\Prob \left( 
		\frac{r d^\lam_r(\mathring{p},\mathring{q})}{ d_M(p,q)} \in 
		[e^{-\epsilon},e^\epsilon]
		\right)
		\geq 1 - \exp(-c n^{1-d\beta})\,.
	\end{equation*}
\end{prop}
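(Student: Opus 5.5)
The proof splits into a deterministic lower bound and a probabilistic upper bound. We work on the event of \cref{lem:connected}, where $G_M^n(r)$ is connected, so $\mathring{G}_M^n(r) = G_M^n(r)$. Let $\mathcal{B}'$ be the event that every $y \in M$ has a vertex within $d_M$-distance $\gamma r$, where $\gamma = \gamma(\eps) > 0$ is small. Cover $M$ by a $(\gamma r/2)$-net of size $O_M((\gamma r)^{-d}) \le O(n)$. Each ball of radius $\gamma r/2$ around a net point is empty with probability $\exp(-\Omega_M(n\gamma^d r^d)) = \exp(-\Omega(n^{1-d\beta}))$. A union bound then gives $\P(\mathcal{B}'^c) \le \exp(-c n^{1-d\beta})$. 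On $\mathcal{B}'$ we have $d_M(p,\mathring{p}), d_M(q,\mathring{q}) \le \gamma r$.

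\textbf{Lower bound (deterministic).} Every edge joins points at $d_M$-distance at most $r$. By the triangle inequality along any graph path,
\[
r\, d_r^\lam(\mathring{p},\mathring{q}) \ge d_M(\mathring{p},\mathring{q}) \ge d_M(p,q) - 2\gamma r .
\]
Since $d_M(p,q) \ge \eps/100$ and $r \le \alpha$, taking $\alpha$ small gives $r\, d_r^\lam(\mathring{p},\mathring{q}) \ge e^{-\eps} d_M(p,q)$.

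\textbf{Upper bound.} Take a $d_M$-geodesic from $p$ to $q$. Place points $p = x_0, x_1, \ldots, x_K = q$ along it with consecutive spacing exactly $(1-2\gamma) r$, except possibly the last step. Then
\[
K \le \frac{d_M(p,q)}{(1-2\gamma) r} + 1 .
\]
On $\mathcal{B}'$, each $B_M(x_j,\gamma r)$ contains a vertex $w_j$, and we take $w_0 = \mathring{p}$, $w_K = \mathring{q}$. Consecutive vertices then satisfy $d_M(w_{j-1},w_j) \le (1-2\gamma)r + 2\gamma r = r$, so they are adjacent. Hence
\[
r\, d_r^\lam(\mathring{p},\mathring{q}) \le rK \le (1-2\gamma)^{-1} d_M(p,q) + r .
\]
We choose $\gamma$ with $(1-2\gamma)^{-1} \le e^{\eps/2}$. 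We choose $\alpha$ so that $r \le \alpha \le (e^{\eps} - e^{\eps/2})\,\eps/100$. Together these give the bound $e^{\eps} d_M(p,q)$.

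No percolation or time-constant input is needed here. The only delicate step is bookkeeping: the nearest-giant points $\mathring{p}, \mathring{q}$ must be within $\gamma r$ rather than $r\log^2 n$, since $r\log^2 n$ would destroy the $rK$ count. This is exactly why we use the density-driven event $\mathcal{B}'$, which is available because $nr^d \ge n^{1-d\beta}$. The failure probability combines \cref{lem:connected} and $\mathcal{B}'$, and is at most $\exp(-c n^{1-d\beta})$ for $n \ge n_0$.
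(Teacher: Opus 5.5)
Your proof is correct and takes essentially the same route as the paper. The lower bound uses connectivity together with the fact that every edge has $d_M$-length at most $r$, and the upper bound uses a chain of points along a $d_M$-geodesic with spacing slightly below $r$, each having a vertex within distance $\gamma r$. Your single covering event $\mathcal{B}'$ at scale $\gamma r$ is somewhat tidier bookkeeping than the paper's. It explicitly controls $d_M(p,\mathring{p})$, which the paper treats loosely by writing $d_r^\lambda(p,q)$, and it gives the estimate simultaneously for all pairs $p,q$.
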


\begin{proof}
	For the lower bound, first note that $G = G_M^n(r)$ is connected with probability $1 - \exp(-cn^{1-d\beta})$ by \cref{lem:connected}.  For $\gamma > 0$ to be chosen later note that \begin{align*}
		\{G \text{ is connected}\} &\cap \{B_M(p, \gamma\eps) \cap G \neq \emptyset\} \cap \{B_M(q, \gamma\eps) \cap G \neq \emptyset\} \\
		&\subset \{d_r^\lambda(\mathring{p},\mathring{q}) \geq (d_M(p,q) - 2\gamma \eps)/r + 1\}
	\end{align*}
	For $\gamma$ small enough as a function of $\eps$ and taking $r \leq \alpha$ for $\alpha$ small as a function of $\eps > 0$ we see $$\{\infty > d_r^\lambda(\mathring{p},\mathring{q}) \geq (d_M(p,q) - 2\gamma \eps)/r + 1\} \subset \{rd_r^\lambda(\mathring{p},\mathring{q}) \geq e^{-\eps} d_M(p,q)\}\,.$$  Noting that $\P(B_M(z, \gamma\eps) \cap G \neq \emptyset) \geq 1- \exp(-\Omega_{M,\eps}(n))$ uniformly for $z \in M$ shows 
    $$\P(rd_r^\lambda(\mathring{p},\mathring{q}) \geq e^{-\eps} d_M(p,q)) \geq 1 - \exp(-\Omega_{M,\eps}(n^{1- d\beta}))\,.$$
	
	For the upper bound, fix $\eps > 0$ and let $\gamma > 0$ to be chosen later as a function of $\eps > 0$. 
	For points $p,q \in G$ with $d_M(p,q) \geq \eps/100$, choose points $p = x_0,x_1,\ldots,x_K = q$ along a $d_M$-geodesic so that  \begin{equation}\label{eq:K-def}
		d_M(x_{j-1},x_j) \leq (1 - 4\gamma)r,\quad  \left|K -  \left((1 - 4\gamma) r\right)^{-1} d_M(p,q) \right| \leq 1\,.  \end{equation}
    We now note that \begin{align}\label{eq:d-LB-sparse}
		\P(  d_r^\lambda(p,q) \leq K ) \geq  \P\left(\bigcap_{j=0}^K \{G_M^n(r) \cap B_M(x_j,\gamma r) \neq \emptyset \}\right)\geq 1- \exp(-\Omega_M(n^{1-d\beta}))\,.
	\end{align}
	For $r,\gamma$ small enough as a function of $\eps$ and $d_M(p,q)$ we have that $K \leq r^{-1} e^\eps d_M(p,q)$, completing the proof.
\end{proof}

We are now ready to prove convergence of metric spaces.

\begin{proof}[Proof of \cref{thm:sparse}]
	For fixed $\eps > 0$, let $X = X_\eps \subset M$ be a finite $\eps$-net in the metric $d_M$.  Note that we may assume that for all $x,x' \in X$ with $x \neq x'$ we have $d_M(x,x') \geq \eps/2$.  Define $f_r(x) = \mathring{x}$.  By \cref{prop:two-point-sparse} we have \begin{align*}
		\P\left(\sup_{x,x' \in X} |d_M(x,x') - rd_r^\lambda(f_r(x),f_r(x')) | \geq \eps \right)\leq C_{M,\eps} \exp\left( - c_{M,\eps} n^{1 - d\beta} \right)
	\end{align*}
for $r \leq \alpha = \alpha(M,\eps,\beta)$ and $n \geq n_0 = n_0(M,\eps,\beta)$. For any $x \in X$ and $z \in B_M(x,\eps) \cap G_M^n(r)$, the proof of \cref{prop:two-point-sparse} (in particular equations \eqref{eq:K-def} and \eqref{eq:d-LB-sparse}) shows that $$\P(rd_r^\lambda(\mathring{x},z) \geq 2\eps) \leq C_{M,\eps} \exp\left( - c_{M,\eps} n^{1 - d\beta} \right)\,.$$
By \cref{Palm theory}, this implies that $$\P(rd_r^\lambda(\mathring{x},z) \leq 2\eps \text{ for all }z \in B_M(x,\eps)) \geq  1 - \exp\left( - c_{M,\eps}' n^{1 - d\beta} \right)$$
thus showing that $f_r(X)$ is a $2\eps$ net with probability $1 - \exp(-\Omega(n^{1-d\beta}))$. \cref{GH-convergence} completes the proof.
\end{proof}

\section{Algorithmic implications} \label{sec:algorithmic-proofs}

We first show a basic geometric statement which will be the main step for deducing \cref{thm:algorithmic} from \cref{thm:degree}:

\begin{lem}\label{lem:GH-positive-implies-finite}
	Let $(M_1,d_1)$ and $(M_2,d_2)$ be compact metric spaces that are not isometric to each other. Then after possibly switching the roles of $M_1$ and $M_2$, there is some finite collection of points $x_1,\ldots,x_K \in M_1$ so that  there is no collection of points $y_1,\ldots,y_K \in M_2$ with $$d_1(x_i,x_j) = d_2(y_i,y_j) \text{ for all } 1 \leq i \leq j \leq K\,.$$
\end{lem}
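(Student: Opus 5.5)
We argue by contrapositive. Suppose that for every finite tuple $x_1,\ldots,x_K \in M_1$ there is a tuple $y_1,\ldots,y_K \in M_2$ realizing the same pairwise distances, and likewise with the roles of $M_1$ and $M_2$ swapped. We will build distance-preserving maps $F: M_1 \to M_2$ and $H: M_2 \to M_1$. Two facts about compact metric spaces then finish the proof. First, a distance-preserving self-map of a compact metric space is surjective. Second, if $F$ and $H$ are both isometric embeddings, then $F \circ H: M_2 \to M_2$ is a distance-preserving self-map, so it is surjective; hence $F$ is surjective and is therefore an isometry. So it suffices to construct one isometric embedding $F: M_1 \to M_2$ under the hypothesis that every finite configuration of $M_1$ is realized in $M_2$. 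The map $H$ comes from the symmetric hypothesis.

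\textbf{Construction of $F$.} Since $M_1$ is compact, it is separable. Fix a countable dense sequence $x_1, x_2, \ldots$ in $M_1$. For each $K$, the hypothesis gives points $y^{(K)}_1,\ldots,y^{(K)}_K \in M_2$ with $d_2(y^{(K)}_i,y^{(K)}_j) = d_1(x_i,x_j)$ for all $i,j \le K$. Because $M_2$ is compact, a diagonal argument produces a subsequence $K_m$ along which $y^{(K_m)}_i$ converges to some $y_i \in M_2$ for every fixed $i$. Distance is continuous, so $d_2(y_i,y_j) = d_1(x_i,x_j)$ for all $i,j$. Setting $F(x_i) = y_i$ defines a distance-preserving map on a dense set. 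Such a map is $1$-Lipschitz, hence uniformly continuous, and $M_2$ is complete, so $F$ extends uniquely to all of $M_1$. By continuity of the metric, the extension is still distance-preserving.

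\textbf{Surjectivity of self-isometries.} This is the step needing the most care. Let $g: X \to X$ be distance-preserving with $X$ compact, and suppose some $z \notin g(X)$. The image $g(X)$ is compact, so $\delta := d(z, g(X)) > 0$. Consider the orbit $z_k = g^k(z)$. For $k > 0$, write $d(z_0, z_k) = d(z, g(z_{k-1})) \ge \delta$. For $j < k$, distance preservation gives $d(z_j,z_k) = d(z_0, z_{k-j}) \ge \delta$. So the orbit is an infinite $\delta$-separated set, which has no convergent subsequence. This contradicts compactness.

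Combining the two steps: if no distinguishing finite configuration exists in either direction, then $M_1$ and $M_2$ are isometric. This contradicts the hypothesis and proves the lemma, with the roles of $M_1$ and $M_2$ switched if necessary. The main subtlety is the diagonal compactness step. The configurations must be taken for growing initial segments of one fixed dense sequence, so that the limits are consistent across different $K$.
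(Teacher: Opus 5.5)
Your proof is correct and follows the same route as the paper. You build isometric embeddings in both directions by diagonal extraction along a fixed dense sequence, then use that a distance-preserving self-map of a compact space is surjective. The only differences are minor: you prove that surjectivity fact yourself via the $\delta$-separated orbit, where the paper cites Burago--Burago--Ivanov. You also get surjectivity of $F$ directly from $F \circ H$ being onto, which is a bit quicker than the paper's use of $g\circ f$ together with injectivity of $g$.
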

\begin{proof}
    We prove the contrapositive.  Since $M_1$ is compact, we may find a countable dense set $\{x_j\}_{j \geq 1} \subset M_1$.  For each $n$, we may find a sequence $\{y_j^{(n)}\}_{j = 1}^n$ so that $$d_2(y_i^{(n)},y_j^{(n)}) = d_1(x_i,x_j) \quad \text{ for all }i < j \leq n\,.$$
    By compactness of $M_2$, we may find a subsequence of $\{n_a\}$ so that for each fixed $i$ we have that $y_i^{(n_a)} \to y_i$ for some $y_i \in M_2$.  By continuity of $d_2$ we have that  \begin{equation}\label{eq:embedding}
        d_2(y_i,y_j) = d_1(x_i,x_j) \qquad \text{ for all } i < j < \infty
    \end{equation}  We define an embedding $f:M_1 \to M_2$ by first defining $f(x_i) = y_i$.  For an arbitrary point $x \in M_1$, we may find a subsequence $n_j$ so that $x_{n_j} \xrightarrow{j \to \infty} x$, and in particular $\{x_{n_j}\}_j$ is a Cauchy sequence. By \eqref{eq:embedding} this implies $\{y_{n_j}\}_j$ is a Cauchy sequence; since $M_2$ is a compact metric space, this implies that $y_{n_j} \xrightarrow{j \to \infty} y \in M_2$.  We define\footnote{To see that $f$ is well-defined, note that if we have two subsequences $x_{n_j}$ and $x_{m_j}$ with $x_{n_j} \to x$ and $x_{m_j} \to x$ then we have $d_2(y_{n_i},y_{m_j}) = d_1(x_{n_i}, x_{m_j}) \leq d_1(x_{n_i},x) + d_1(x_{m_j},x)$ which tends to zero as each of $i,j \to \infty$.  The sequences $(y_{n_i})_i, (y_{m_j})_j$ thus have the same limit.} $f(x) = y$.  For all $x,z \in M_1$, we have that there are sequences $x_{n_i} \to x$ and $x_{m_i} \to z$ and so $$d_2(f(x),f(z)) = \lim_i d_2( f(x_{n_i}), f(x_{m_i})) = \lim_i d_1(x_{n_i}, x_{m_i}) = d_1(x,z)$$
    implying that $f:M_1 \to M_2$ is an isometric embedding.  Repeating the argument with the roles reversed gives us an isometric embedding $g:M_2 \to M_1.$  This implies that $g \circ f:M_1 \to M_1$ is an isometry.  Since $M_1$ is a compact metric space, this implies that $g \circ f$ is surjective (see e.g.\ \cite[Thm.~1.6.14]{burago2001course}). We claim that $f$ is also surjective, so take $y \in M_2$. Set $x = g(y) \in M_1$, and by surjectivity of $g \circ f$ there is some $z \in M_1$ so that $g(f(z)) = x$.  But since $g$ is an isometry, we have that $g$ is injective implying $y = f(z)$.  This shows that $f:M_1 \to M_2$ is a bijective isometry, completing the proof.
\end{proof}

From this we deduce a statistic that will separate our two manifolds with high probability.

\begin{cor}\label{cor:identify-finite}
	Let $M_1, M_2$ be compact Riemannian manifolds that are not isometric, with metrics $d_1$ and $d_2$. 
    Then after possibly switching the roles of $M_1$ and $M_2$, there is some finite collection of points $x_1,\ldots,x_K \in M_1$ along with a parameter $\eps > 0$ so that for all  $y_1,\ldots,y_K \in M_2$ we have $$\sum_{i,j} \left|d_1(x_i,x_j) - d_2(y_i,y_j)\right| \geq \eps\,.$$
\end{cor}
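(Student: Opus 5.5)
The plan is to upgrade \cref{lem:GH-positive-implies-finite} from exact non-realizability of a distance array to a quantitative gap, using compactness of $M_2^K$ and continuity of the relevant functional. Compact Riemannian manifolds with their intrinsic metrics $d_1,d_2$ are compact metric spaces, so \cref{lem:GH-positive-implies-finite} applies. After possibly swapping $M_1$ and $M_2$, it gives points $x_1,\ldots,x_K \in M_1$ such that no $K$-tuple in $M_2$ reproduces all pairwise distances $d_1(x_i,x_j)$.

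With these points fixed, define
\[
F:M_2^K \to [0,\infty),\qquad F(y_1,\ldots,y_K) = \sum_{i,j} \left|d_1(x_i,x_j) - d_2(y_i,y_j)\right|.
\]
Each $y \mapsto d_2(y_i,y_j)$ is $1$-Lipschitz in each coordinate, so $F$ is continuous on the product topology of $M_2^K$. Since $M_2$ is compact, $M_2^K$ is compact by Tychonoff, or just because it is a finite product of compact metric spaces. Hence $F$ attains its infimum at some $(y_1^*,\ldots,y_K^*)$.

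If $F(y^*) = 0$, then every term vanishes, so $d_2(y_i^*,y_j^*) = d_1(x_i,x_j)$ for all $i,j$. This contradicts the choice of the $x_i$. Therefore we may set $\eps := F(y^*) > 0$, and then $F \geq \eps$ on all of $M_2^K$, which is exactly the claim.

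There is no real obstacle. The only points to check are that the sum over all $i,j$, rather than $i\le j$, does not matter because both the summand and the vanishing condition are symmetric in $i,j$, and that the swap of roles is inherited directly from \cref{lem:GH-positive-implies-finite}.
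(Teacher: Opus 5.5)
Your proposal is correct and follows the paper's proof: apply \cref{lem:GH-positive-implies-finite}, then note that $F$ is continuous on the compact set $M_2^K$ and never vanishes. Hence its minimum is some $\eps > 0$. Your extra remarks on the symmetry of the index range and on inheriting the swap of roles are harmless bookkeeping the paper leaves implicit.
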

\begin{proof}
	Apply \cref{lem:GH-positive-implies-finite} to obtain points $x_1,\ldots,x_K \in M_1$.  Consider the function $F:M_2^K \to \R_{\geq 0}$ defined by $$F(y_1,\ldots,y_K) = \sum_{i,j} \left|d_1(x_i,x_j) - d_2(y_i,y_j)\right|\,.$$
	Note that $F$ is continuous on the compact set $M_2^K$ and never equal to $0$, implying it achieves some minimum $\eps > 0$.
\end{proof}

Conversely, it is not hard to see that if the Gromov-Hausdorff distance is small then one can always approximate pairwise distances: 

\begin{lem}\label{lem:GH-finite-errors}
    Suppose that $(M_1,d_1)$ and $(M_2,d_2)$ are metric spaces so that $d_{\GH}((M_1,d_1), (M_2,d_2)) \leq \eps$.  Then for all $x_1,\ldots,x_m \in M_1$ there is $y_1,\ldots,y_m \in M_2$ so that for all $i < j \leq m$ we have $$\left|d_1(x_i,x_j) - d_2(y_i,y_j)\right| \leq 4\eps\,.$$
\end{lem}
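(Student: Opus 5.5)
We will unpack the definition of the Gromov--Hausdorff distance via an isometric embedding into a common metric space. By definition, $d_{\GH}((M_1,d_1),(M_2,d_2))$ is the infimum, over all metric spaces $(Z,d_Z)$ and all isometric embeddings $\phi_1:M_1\to Z$, $\phi_2:M_2\to Z$, of the Hausdorff distance $d_H(\phi_1(M_1),\phi_2(M_2))$. Since we only know $d_{\GH}\le\eps$, the infimum need not be attained. We therefore first choose $Z,\phi_1,\phi_2$ with $d_H(\phi_1(M_1),\phi_2(M_2))<2\eps$; any slack strictly between $\eps$ and $2\eps$ would do.

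Next, given $x_1,\ldots,x_m\in M_1$, the Hausdorff bound gives, for each $i$, a point $y_i\in M_2$ with $d_Z(\phi_1(x_i),\phi_2(y_i))<2\eps$. This uses the definition of Hausdorff distance as the infimal $t$ for which each set lies in the open $t$-neighborhood of the other.

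Finally, the triangle inequality in $Z$, together with the fact that $\phi_1$ and $\phi_2$ are isometric embeddings, gives
\[
|d_1(x_i,x_j)-d_2(y_i,y_j)| = |d_Z(\phi_1(x_i),\phi_1(x_j)) - d_Z(\phi_2(y_i),\phi_2(y_j))| \le d_Z(\phi_1(x_i),\phi_2(y_i)) + d_Z(\phi_1(x_j),\phi_2(y_j)) < 4\eps .
\]
This is exactly the claimed bound for all $i<j\le m$.

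Nothing here is a real obstacle; the argument is routine. The only point needing care is the non-attainment of the infimum, and the factor-$2$ slack in the first step handles it, which explains the constant $4$. Equivalently, one could use the correspondence characterization: a correspondence $\mathcal{R}$ with distortion less than $2\cdot 2\eps$, say, and $y_i$ chosen with $(x_i,y_i)\in\mathcal{R}$. The embedding route avoids citing that equivalence.
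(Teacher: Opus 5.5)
Your proposal is correct and follows essentially the same route as the paper. The paper also embeds $M_1$ and $M_2$ isometrically into a common space at Hausdorff distance about $2\eps$, matches each $x_i$ to a nearby $y_i$, and concludes with the triangle inequality. Your strict bound $<2\eps$ is slightly more careful than the paper's version, which picks $y_i$ with $d_N(f(x_i),g(y_i))\le 2\eps$ directly from a non-strict Hausdorff bound. Both arguments implicitly need $\eps>0$, which costs nothing since the lemma is only applied with $\eps=\eps_2>0$.
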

\begin{proof}
    By the definition of $d_{\GH}$ there is some metric space $(N,d_N)$ along with isometric embeddings $f:M_1 \to N$ and $g:M_2 \to N$ so that $d_N(f(M_1),g(M_2)) \leq 2\eps\,.$  This implies that for each $x_i$ pick some $y_i$ so that $d_N(f(x_i),g(y_i)) \leq 2\eps$.  By the triangle inequality we have \begin{align*}
        \left|d_1(x_i,x_j) - d_2(y_i,y_j) \right| &= |d_N(f(x_i),f(x_j)) - d_N(g(y_i),g(y_j)) | \\
        &\leq d_N(f(x_i),g(y_i)) + d_N(f(x_j),g(y_j)) \\
        &\leq 4\eps\,. \qedhere
    \end{align*}
\end{proof}

We handle the case of \cref{thm:algorithmic} when they have different dimensions first.  Here, the corresponding values of $r_j$ will radically differ and so the graph diameters will be a distinguishing statistic.

\begin{prop}\label{prop:diff-dimensions}
    Let $M_1$ and $M_2$ be connected compact Riemannian manifolds of dimension $d_1 > d_2 \geq 2$ and volume $1$.  Let $\Delta_c(d_j)$ denote the critical average degree for having a giant component. 
    There is $\alpha = \alpha(M_1,M_2)$ so that for each $\Delta_0 > \max\{\Delta_c(d_1),\Delta_c(d_2)\}$ there is $c = c(M_1,M_2,\Delta_0)$ so for all $\Delta \in [\Delta_0, \alpha n]$, given the adjacency matrices of the random geometric graphs  $G_{M_1}(n;r_1)$ and $G_{M_2}(n;r_2)$ of expected average degree $\Delta$, there is a $O(n^2 \Delta)$ algorithm which can correctly identify which graph came from $M_1$ and $M_2$ with probability at least $1- \exp(-c \log^2 n)\,.$ 
\end{prop}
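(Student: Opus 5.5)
The idea is that the graph diameter alone separates the two graphs, because the connection radii differ by a polynomial factor. With volume $1$ and expected average degree $\Delta$, \cref{lem:almostflat} gives $\Delta \asymp n r_j^{d_j}$ (up to constants depending on $M_j$, valid when $r_j$ is small, which is ensured by taking $\alpha$ small). Hence $r_j \asymp (\Delta/n)^{1/d_j}$. \cref{thm:degree} (together with \cref{thm:sparse} and \cref{lem:connected} in the dense range) shows that the diameter of the largest-diameter component, in the graph metric, is $(1+o(1))\mubar(\Delta)\,\mathrm{diam}(M_j)/r_j$ with probability $1-\exp(-c\log^2 n)$. Applying the theorem with a fixed $\eps$ (say $\eps=\mathrm{diam}(M_j)/4$) gives this up to a constant factor. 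Here $\mubar(\Delta)\in[1,\mubar(\Delta_0)]$ by \cref{lem:limit-time-constant}. So graph $j$ has diameter $D_j = \Theta_{M_j,\Delta_0}\big((n/\Delta)^{1/d_j}\big)$.

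Since $d_1>d_2$ and $n/\Delta \ge 1/\alpha$, we get
\[
D_2/D_1 \asymp (n/\Delta)^{1/d_2-1/d_1}\ge \alpha^{-(1/d_2-1/d_1)}.
\]
The right-hand side is large once $\alpha$ is chosen small depending on $M_1,M_2$ (and on the constants, which depend on $\Delta_0$ only through $\mubar(\Delta_0)$). This point needs care. The constant in the upper bound for $D_2$ involves $\mubar(\Delta)\le\mubar(\Delta_0)$. We therefore need $\alpha$ independent of $\Delta_0$, as the statement requires.

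If $\Delta\le n^{1/2}$, say, then $n/\Delta\ge n^{1/2}$, so the ratio tends to infinity and no restriction on $\alpha$ is needed. In the dense range $\Delta\ge n^{1/2}$, use \cref{thm:sparse}, where the rescaling has no $\mubar$ factor. There the ratio is $\asymp (\mathrm{diam}(M_2)/\mathrm{diam}(M_1))(n/\Delta)^{1/d_2-1/d_1}$ times absolute volume constants, and choosing $\alpha$ small depending only on $M_1,M_2$ makes it exceed $2$.

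The algorithm runs a breadth-first search from every vertex of each graph, costing $O(n\cdot n\Delta)$ in total. It computes each component's graph diameter, takes the component of largest graph diameter, and outputs that the graph with the smaller maximal diameter came from $M_1$. By \cref{cor:graph-diam-giant}, the component of largest graph diameter is $\mathring G$ with the required probability. Passing from the Poisson to the fixed-$n$ model is done exactly as in the proof of \cref{thm:degree}.

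The main obstacle is the uniformity of the constants over the full range $\Delta\in[\Delta_0,\alpha n]$, in particular the requirement that $\alpha$ not depend on $\Delta_0$. Splitting the range at $n^{1/2}$ as above handles this: in the sparse range the polynomial gap swamps every constant, and in the dense range the constants are $\Delta_0$-free.
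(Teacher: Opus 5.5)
Your proposal is correct and follows essentially the same route as the paper: the polynomial gap $r_1/r_2 \asymp (n/\Delta)^{1/d_2-1/d_1}$ combined with the diameter bounds from \cref{thm:degree}, a breadth-first search from every vertex, and \cref{cor:graph-diam-giant} to identify the giant. Your split of the range at $\Delta = n^{1/2}$ is a useful addition that the paper's proof does not make explicit; in the dense part you use \cref{thm:sparse}, which has no $\mubar$ factor. This makes $\alpha$ genuinely independent of $\Delta_0$, as the statement's quantifiers require.
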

\begin{proof}
    For $\alpha$ small enough, we have that $r_j = (\frac{\Delta}{n \alpha_{d_j}})^{1/d_j} (1 + o_{\alpha \to 0}(1))$ for $j \in \{1,2\}$ where $\alpha_d$ is the volume of the unit ball in $\R^d$.  In particular, for each $L > 0$ we may take $\alpha$ sufficiently small so that $r_1 / r_2 \geq L$.  For each fixed $\eps > 0$, we may apply \cref{thm:degree} to see that with probability at least $1 - \exp(-\Omega(\log^2n))$ we have 
    \begin{equation} \label{eq:diameters-bounds} 
    \mathrm{diam}(\mathring{G}_{M_1}(n;r_1)) \leq (1 + \eps)\mubar_{d_1}(\Delta) \mathrm{diam}(M_1) r_1^{-1} \  \text{ and } \  \mathrm{diam}(\mathring{G}_{M_2}(n;r_2)) \geq (1 - \eps)\mubar_{d_2}(\Delta) \mathrm{diam}(M_2) r_2^{-1}
    \end{equation}
    where we write $\mathrm{diam}(\mathring{G}_{M_1}(n;r_1))$ for the graph diameter and $\mathrm{diam}(M_1)$ for the metric diameter.  In particular, for $\alpha$ sufficiently small we have $$\frac{\mathrm{diam}(\mathring{G}_{M_1}(n;r_1))}{\mathrm{diam}(\mathring{G}_{M_2}(n;r_2))} \leq \frac{1}{2}$$
    provided we have \eqref{eq:diameters-bounds}.  We can find the graph diameter of each graph by running a breadth first search starting from each vertex; this identifies the giant component by \cref{cor:graph-diam-giant}.  Since a breadth first search has run-time $O(n\Delta)$, the algorithm runs in time $O(n^2 \Delta)$.  Choosing the graph of larger graph diameter provides the algorithm which succeeds with probability at least $1 - \exp(-c \log^2 n).$
\end{proof}

We need a basic fact about concentration of the number of edges. While obtaining an optimal bound appears to be challenging, the work of Bonnet and Gusakova \cite{bonnet2024concentration} shows that the number of edges in a random geometric graph in this level of generality has the appropriate number of edges (see Section 6.1 of \cite{bonnet2024concentration} for an argument showing that edge counts satisfy the assumptions of their Theorem 1.1):

\begin{lem}\label{lem:edge-concentration}
    Let $M$ be a connected compact $d$-dimensional Riemannian manifold of volume $1$.  For each $n$ let $G_M^n(r)$ be the Poisson random geometric graph with expected average degree $\Delta \in [1,n/2]$.  Then for each $\eps > 0$ there is $c = c(\eps,M) > 0$ so that if $e(G_M^n(r))$ is the number of edges in $G_M^n(r)$ then $$\P\left(\left| e(G_M^n(r)) - n\Delta/2 \right| > \eps n\Delta \right) \leq \exp\left(-c \log^2 n \right)\,.$$
\end{lem}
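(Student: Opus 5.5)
The plan is to compute the mean and variance of $e(G_M^n(r))$ via the multivariate Mecke formula (the iterated version of \cref{Palm theory}), and then obtain the $\exp(-c\log^2 n)$ tail from a concentration inequality for Poisson $U$-statistics of order two. The most convenient choice is the Bonnet--Gusakova theorem cited just above; failing that, a Talagrand- or Reitzner--Schulte-type bound for Poisson $U$-statistics would do.

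\textbf{Step 1 (mean).} The multivariate Mecke formula gives
$$\E\, e(G_M^n(r)) = \frac{n^2}{2}\int_M \mathrm{Vol}_M(B_M(x,r))\,dM(x).$$
Since $\Delta$ is defined as the expected average degree, this equals $n\Delta/2$ up to the normalization convention; in any case it is $\Theta(n\Delta)$. By \cref{lem:almostflat}, $\mathrm{Vol}_M(B_M(x,r)) = (1+o(1))\alpha_d r^d$ uniformly in $x$ once $r$ is small, while for $r=\Theta(1)$ (where $\Delta\le n/2$) it is bounded below by a constant.

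\textbf{Step 2 (variance).} Apply the Mecke formula with up to four points. The diagonal terms give $O(n\Delta)$, and the terms where two edges share a vertex give $O(n\Delta^2)$. Hence
$$\mathrm{Var}(e) = O(n\Delta^2 + n\Delta),$$
which is $o((n\Delta)^2)$, so Chebyshev's inequality already gives concentration, but only with a polynomially small error.

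\textbf{Step 3 (tail bound).} To upgrade to $\exp(-c\log^2 n)$, I would verify the hypotheses of \cite[Theorem 1.1]{bonnet2024concentration} as in their Section 6.1. The edge count is a $U$-statistic whose add-one cost at a point $x$ equals the number of points in $B_M(x,r)$, a Poisson variable of mean $\Theta(\Delta)$.

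An alternative route is a direct truncation argument. On the event that every ball $B_M(x,r)$ contains at most $C(\Delta+\log^2 n)$ points, which by a union bound over an $r$-net fails with probability $\exp(-c\log^2 n)$, the functional is Lipschitz with constant $L=O(\Delta+\log^2 n)$ under adding or removing points. A Poisson concentration inequality for such functionals (for example via a martingale over a partition of $M$ into cells of volume $r^d$) then gives deviations of order $\sqrt{n\Delta}\,L\log n$ with probability $1-\exp(-c\log^2 n)$. This is $\le \eps n\Delta$ whenever $\Delta\ge 1$ and $n$ is large, since $L\log n\sqrt{n\Delta}=o(n\Delta)$ because $\Delta\ge1$ and $\log^3 n\ll\sqrt n$.

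\textbf{Main obstacle.} The main difficulty is this tail step. It needs a concentration inequality valid uniformly across all $\Delta\in[1,n/2]$, including the dense end where $r$ is of constant order and the local Euclidean approximation is irrelevant. The truncation must therefore handle both the $\log^2 n$ term when $\Delta$ is small and the regime where $\Delta$ is comparable to $n$.
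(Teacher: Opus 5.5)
Your primary route is the paper's. The paper gives no argument of its own for this lemma; it invokes \cite[Theorem 1.1]{bonnet2024concentration} and points to Section 6.1 there for the check that edge counts satisfy its hypotheses. Your Mecke computations are correct supplements: the mean is exactly $n\Delta/2$ when $\Delta$ is the Palm expected degree (note that $\Theta(n\Delta)$ alone would not suffice for the lemma), and $\mathrm{Var}(e) = O(n\Delta + n\Delta^2)$.

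Your fallback truncation argument does not work as written. The claim that $L\log n\sqrt{n\Delta} = o(n\Delta)$ for all $\Delta \ge 1$ is false. Since $L \ge \Delta$, it requires $\Delta \log n \ll \sqrt{n\Delta}$, i.e.\ $\Delta \ll n/\log^2 n$. So it fails on the whole range $\Delta \in [\eps^2 n/\log^2 n,\, n/2]$; at $\Delta = n/2$ your deviation bound is of order $n^2 \log n$, which exceeds $n\Delta$. There is a second problem in the same regime. When $\Delta \asymp n$ the radius $r$ is of constant order, so a partition into cells of volume $r^d$ has only $O(1)$ cells, and the cell martingale gives no concentration at all.

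The loss shows up against your own Step 2. The true standard deviation is of order $\sqrt{n}\,\Delta$, while $\sqrt{n\Delta}\,L \asymp \sqrt{n}\,\Delta^{3/2}$. This is because you bound each cell increment by its worst case $\Delta L$ rather than its typical size, about $\sqrt{\Delta}\cdot\Delta$, which is driven by the fluctuation of the cell's point count.

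A fix is to use one increment per point:
\begin{itemize}
\item Condition on $N = |V(G_M^n(r))|$, which lies within $\sqrt{n}\log n$ of $n$ with probability $1-\exp(-c\log^2 n)$.
\item Expose the $N$ i.i.d.\ points one at a time, and apply a bounded-differences inequality that tolerates a rare bad event (e.g.\ Warnke's typical bounded differences inequality) with per-point Lipschitz constant $2L$ on your good event.
\item This gives deviations $O(\sqrt{n}\,L\log n) = o(n\Delta)$ uniformly for $\Delta \in [1,n/2]$. On the same event, $|\E[e \mid N] - n\Delta/2| = O(\sqrt{n}\,\Delta\log n)$.
\end{itemize}
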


\begin{proof}[Proof of \cref{thm:algorithmic}]
    First note that if $d_1 \neq d_2$ then \cref{prop:diff-dimensions} applies and so we may assume $d_1 = d_2 = d$. 

    Apply \cref{cor:identify-finite} to find $\eps_1 = \eps_1(M_1,M_2) \in (0,1/2)$ and $K = K(M_1,M_2)< \infty$ so that (up to possibly swapping $M_1$ and $M_2$) there is $x_1,\ldots,x_K \in M_1$ so that for all $y_1,\ldots,y_K \in M_2$ we have \begin{equation} \label{eq:separation criterion}
    \sum_{i,j} \left|d_{M_1}(x_i,x_j) - d_{M_2}(y_i,y_j) \right| \geq \eps_1\,.
    \end{equation}
    We will apply \cref{thm:degree} for $\eps = \eps_2 := \eps_1 / (16 K^2)$.  For $i \in \{1,2\}$ define the event $\mathcal{G}_i$ via \begin{align*}
        \mathcal{G}_i = \left\{d_{\GH}\left((\mathring{G}_{M_i}(n;r_i),  \frac{r_i}{\mubar(\Delta)} d_{G_i}),(M_i,  d_{M_i}) \right) \leq \eps_2\right\}\,.
    \end{align*} 
    We then note that by \cref{lem:GH-finite-errors}, this implies that on event $\mathcal{G}_1$, we may find points $z_1,\ldots,z_K \in \mathring{G}_1$ so that \begin{equation}\label{eq:M-1-zs}
        \sum_{i,j} |d_{M_1}(x_i,x_j) - \frac{r_1}{\mubar(\Delta)}d_{G_1}(z_i,z_j)| \leq K^2 4 \eps_2 \leq  \frac{\eps_1}{4}\,.
    \end{equation}
    Conversely, on event $\mathcal{G}_2$, for all $w_1,\ldots, w_K \in \mathring{G}_2$, there are points $y_1,\ldots,y_K$ so that $$\sum_{i,j} |d_{M_2}(y_i,y_j) - \frac{r_2}{\mubar(\Delta)}d_{G_2}(w_i,w_j)| \leq \frac{\eps_1}{4}\,.$$
     Letting $\alpha_d$ denote the volume of the unit ball in $\R^d$, we note that for $\alpha$ sufficiently small we have $$r_k = (1 + o_{\alpha \to 0}(1))\left(\frac{\Delta}{n \alpha_d} \right)^{1/d}$$  
    for each $k \in \{1,2\}$.  Further, if we apply \cref{lem:edge-concentration} and define $r = \left(\frac{2e(G_{M_1}(n;r_1))}{n^2 \alpha_d}\right)^{1/d}$ then note that with probability at least $1 - \exp(-\Omega(\log^2 n))$ we have $$e^{-\eps_1/100} \leq \frac{r_k}{r} \leq e^{\eps_1/100}$$
    for $k \in \{1,2\}$ and $\alpha$ sufficiently small.  We may then use \eqref{eq:separation criterion} to see that  for all $w_1,\ldots,w_K \in \mathring{G}_2$ we have $$\sum_{i,j} \left|d_{M_1}(x_i,x_j) - \frac{r}{\mubar(\Delta)}d_{G_2}(w_i,w_j) \right| \geq \frac{3\eps_1}{4} $$
    and for $z_1,\ldots,z_K \in G_1$ satisfying \eqref{eq:M-1-zs} we have 
$$\sum_{i,j} \left|d_{M_1}(x_i,x_j) - \frac{r}{\mubar(\Delta)}d_{G_1}(z_i,z_j) \right| \leq \frac{\eps_1}{2}\,.$$
    Our algorithm is defined as follows.  First use a breadth-first search started at each vertex to find all pairwise distances in the metrics $d_{G_i}$ in run-time $O(n^3)$; this also allows us to find the largest component by \cref{cor:graph-diam-giant}
    simultaneously.  We may also compute $r = \left(\frac{2e(G_{M_1}(n;r_1))}{n^2 \alpha_d}\right)^{1/d}$  and $\Delta' = 2e(G_{M_1}(n;r_1))/n$ in this time as well.  For $k \in \{1,2\}$ compute  $$a_k =\min_{w_1,\dots,w_K \in \mathring{G}_k} \sum_{i,j} \left|d_{M_1}(x_i,x_j) - \frac{r}{\mubar(\Delta')}d_{G_k}(w_i,w_j) \right|$$
    in time $O(n^K)$ by checking all $K$-tuples.  If $a_1 \leq a_2$ output that $G_1$ came from $M_1$, otherwise output that $G_2$ came from $M_1$.
    
    Since $\Delta' = (1 + o_{\alpha \to 0}(1))\Delta$ and $\mubar$ is continuous by \cref{Continuity of Time Constant}, we see that for $\alpha$ small enough we have that $a_1 \leq 9\eps_1/16$ and $a_2 \geq 11\eps_1/16$ and so this algorithm outputs the correct outcome with probability at least $ 1 - \exp(-c \log^2 n)\,.$
\end{proof}

\section*{Acknowledgments}
K.D.\ and M.M.\ are partially supported by NSF grant DMS-2246624. M.M.\ is additionally supported by NSF grant DMS-2336788.  The authors thank Elchanan Mossel for comments on the introduction of a previous draft.

\bibliographystyle{abbrv}
\bibliography{refs}

\appendix
\section{Some results for geometric random graphs on \texorpdfstring{$\R^d$}{Rd} with
distorted metric and intensity measure} \label{sec:appendix}

\begin{proof}[Proof of  \cref{lem:non-uniform-uniqueness}]
    We will adapt the proof of its homogeneous analogue, \cite[Prop.~10.13(ii)]{Pen03}.
    The key observation is that, since
    $\sigma(\cdot,\cdot) \le \|\cdot-\cdot\|$ and $\frac{d\nu}{d\mathcal{L}} \ge \lambda_0$, the graph $G_s$ is contains its homogeneous counterpart
    as a subgraph, allowing us to use monotonicity
    of certain events. The metric $\sigma$ is used
    to determine the edges of $G_s$, and should
    be used to define balls in the ``crossing events''
    in \cite[Sec.~10.2]{Pen03}.
    The fact that $\sigma$ and $\nu$ are inhomogeneous means that
    technically all the lemmas have to be stated and proven
    for $x + B(s)$ rather than $B(s)$, with all bounds uniform
    over $x$, but this does not change the proofs in an essential way.

    Note first that the existence of a component of diameter at least $\phi_s$ follows from applying \cite[Prop.~10.13]{Pen03} at $\lambda_0$, and so we will focus on the non-existence of other components.
    We now  
    show how to adapt \cite[Prop.~10.13(ii)]{Pen03}.  The proof of \cite[Prop.~10.13(ii)]{Pen03} is broken into two cases, of $d \geq 3$ and $d = 2$. We describe these separately.
    
    For $d \geq 3$, we first give a quick overview of the logical chain of implications starting from \cite[Prop.~10.13(ii)]{Pen03} and working backwards, and then describe the bounds attained.  Throughout, we will adhere as closely as possible to the notation in \cite{Pen03} so one may directly translate.
    The claim \cite[Prop.~10.13(ii)]{Pen03} follows from \cite[Lemma~10.12]{Pen03}, which shows that it is rare to have two components $\kappa$ and $\kappa'$ so that $\kappa$ crosses in the $k$ coordinate and that the (Euclidean) diameter of the projection of $\kappa'$ to the $k$ coordinate is at least $\phi_s$. This is performed by considering a slab decomposition and using \cite[Lemma~10.10]{Pen03} to bound the probability of a crossing event on each slab. 
    This is where the lower bound on $\sigma$ is used:
    it guarantees that (for a sufficiently large
    choice of slab width $K'$) any component which has vertices
    in two slabs must have vertices in all 
    the intermediate slabs; this property is necessary to establish the inclusion
    \[
        G'_{x,y,s} \subset \bigcap_{j-1}^{[(\phi_s - 1)/K']} A'_j
    \]
    where the above events are defined as in \cite[Lemma~10.12]{Pen03}.
    
    Here, in the statement of \cite[Lemma~10.10]{Pen03}, in
    order to account for the inhomogeneity of the metric, the 
    lower bound we want is
    \[
        \liminf_{s \to \infty}
        \inf_{x \in \R^d}
        \inf_{A, B \subset N_x}
        \frac{\Prob\left[C(A; \mathcal{H}_\nu \cap x + S(K',s))
        \cap C(B; \mathcal{H}_\nu \cap x + S(K',s)) \ne 
        \emptyset \right] }
        {\Prob \left[ C(A; \mathcal{H}_\nu \cap x + S(K',s)) \ne 
        \emptyset;
        C(B; \mathcal{H}_\nu \cap x + S(K',s) \ne \emptyset
        \right]}
        > 0
    \]
    where $\mathcal{H}_\nu$ is the point configuration of
    the Poisson process with intensity $\nu$,
    which we elsewhere refer to as $X_\nu$; we take
    \[
    N_x := \{ y \in x +(-2,0)\times[0,s]^{d-1} :
    \sigma(y, x + S(K',s)) < 1\};
    \]
    and $S(K',s)$ and $C(\cdot ; \cdot)$
    are defined as in \cite[Lemma~10.10]{Pen03}.    
    
    The only percolation input into \cite[Lemma~10.10]{Pen03} is \cite[Lemma~10.8]{Pen03} which is in fact a monotone event, and so it directly applies to our inhomogeneous setting.
    Note also that in the case $\lam = \mu$ (borrowing the notation from \cite[Lemma~10.10]{Pen03}), the lower
    bound obtained improves as $\lam \to \infty$, and
    so we get a constant depending only on $\lam_0 > \lam_c$
    which holds for all $\frac{d\nu}{d\mathcal{L}} \ge \lam_0$.
    
    Putting the pieces together, we note that the proof of \cite[Lemma~10.12]{Pen03} yields an upper bound of $$\Prob\left(
        \begin{array}{c}
            \mbox{the second largest component of } G_s \\
            \mbox{ has Euclidean diameter at least } \phi_s
        \end{array}
        \right) \leq C_{d} s^d \exp(-c \phi_s)$$
        for $c = c(d,\lambda_0) > 0$.

    For the case of $d = 2$, the proof of \cite[Prop.~10.13(ii)]{Pen03} is more explicitly monotone: one covers the square with $O_d(s^2)$ many vertical and horizontal dominoes of aspect ratio $2$ and side-length $c_{d} \phi_s$, applies \cite[Lemma~10.5]{Pen03} to assert that each horizontal domino has a horizontal crossing and each vertical domino has a vertical crossing with probability at least $1 - \exp(-c \phi_s)$, and then notes that any component of large metric diameter must intersect the component constructed in this manner.  Since the only percolation input to this argument is the monotone event guaranteeing a crossing for each domino, it directly implies the $d=2$ case of \cref{lem:non-uniform-uniqueness}. 
\end{proof}

\begin{proof}[Proof of \cref{non-uniform-Antal-Pisztora}]
    We indicate how to extract this statement from the proof of its homogeneous analogue \cite[Lemma 3.4]{YCG11}.  The proof of \cite[Lemma 3.4]{YCG11} is a renormalization technique; we introduce some of their notation here.  For a parameter $M$ which will be fixed but large they define $B(M) = [-M/2,M/2]^d$ and $B(5M/4)$ analogously.  They then define their boxes $B_z = B(M) + \{Mz\}$ and $B_z^+ = B(5M/4) + \{Mz\}$.  They define events $A_z$ depending only on the boxes $B_z$ and $B_z^+$.  The only properties we need about these events are that: (1) $A_z$ and $A_w$ are independent if $\|z - w\|_\infty \geq 2$;  (2) for each fixed $\delta > 0$ one may make $M$ large enough so that $\P(A_z) \geq 1 - \delta$ for each $z$, uniformly in $z$.  Property (1) holds for us as well (provided $M$ is large enough).  Property (2) is proven in \cite{YCG11} by the homogeneous analogue of Lemma \ref{lem:non-uniform-uniqueness} and so Lemma \ref{lem:non-uniform-uniqueness} proves property (2) for us as well.

    As in \cite{YCG11}, by $O(1)$-dependence, for any $p_1$, one may find $M_0$ so that for all $M \geq M_0$ the Bernoulli field $\{\one\{A_z\} \}_{z \in \Z^d}$ stochastically dominates an i.i.d.\ Bernoulli-$p_1$ random field.  
    
    Following \cite{AP96}, they will alter a path witnessing $x \stackrel{G}{\longleftrightarrow} y$ in order to upper bound it in the macroscopic scale.  To this end, in the macroscopic scale $z \in \Z^d$ they call a box white if $A_z$ holds and black otherwise.  If $x \stackrel{G}{\longleftrightarrow}y$ then they let $a,b \in \Z^d$ be so that $x \in B_a$ and $y \in B_b$ and choose a path $a_0,a_1,\ldots,a_n \in \Z^d$ so that $a = a_0, a_n = b$ and $\|a_i - a_{i+1}\|_1 = 1$.  For a point $z \in \Z^d$ they define the cluster $C(z)$ to consist of $\{z\}$ if $z$ is white and the black connected component of $z$ together with its boundary if it is black.  Recall that the black vertices are stochastically dominated by i.i.d.\ $1 - p_1$ percolation.  They now define $$W = \bigcup_{j = 0}^n \bigcup_{z \in C(a_j)} B_z^+\,.$$

    They follow \cite{AP96} to alter the path witnessing $x \stackrel{G}{\longleftrightarrow} y$ to lie within $W$, which in our setting follows from translating \cite{AP96} verbatim (as \cite{YCG11} does).  The next step of \cite{YCG11} shows that if one has a path $x_1,\ldots,x_t$ of connected points in $G$ that is minimal that lies in $B(M)$ then one can upper bound $t$ in terms of $\mathcal{L}(B(M+1))$; this follows from noting---as in our case---that by minimality of the path, the balls of radius $1/2$ centered at $x_j$ are disjoint and lie in $B(M+1)$.  This guarantees that $d_G(x,y) \leq C' |W|$ for some $C'$ depending on $M$ and the dimension $d$.  They complete the proof by noting that if one takes $p_1$ large enough, one may bound $ |W|$ by $O_M(\|x - y\|_2)$ as in \cite{AP96}; this only depends on having $1 - p_1$ sufficiently small as a function of $d$ and so the proof is complete.
\end{proof}

\end{document}